 \theoremstyle{plain}
\newtheorem{thm}{Theorem}[section]
  \theoremstyle{definition}
  \newtheorem{defn}[thm]{Definition}
  \theoremstyle{plain}
  \newtheorem{prop}[thm]{Proposition}
 \theoremstyle{definition}
  \newtheorem{example}[thm]{Example}
  \theoremstyle{remark}
  \newtheorem{rem}[thm]{Remark}
  \theoremstyle{plain}
  \newtheorem{cor}[thm]{Corollary}
  \theoremstyle{plain}
  \newtheorem{lem}[thm]{Lemma}
  \theoremstyle{remark}
  \newtheorem*{acknowledgement*}{Acknowledgement}
\newcommand{\R}{{\mathbb R}}
\newcommand{\e}{\epsilon}
\newcommand{\cl}{\mbox{\rm cl}\,}
\newcommand{\calm}{\mbox{\rm calm}\,}
\newcommand{\calmx}{\mbox{\rm calm}\,}
\newcommand{\gph}{\mbox{\rm gph}\,}
\newcommand{\lip}{\mbox{\rm lip}\,}
\newcommand{\lipx}{\mbox{\rm lip}\,}
\newcommand{\lipi}{\mbox{\rm lip}\,}
\newcommand{\A}{X}
\newcommand{\rt}{\rightarrow}
\newenvironment{myequation}{\setcounter{equation}{\value{thm}}
   \begin{equation}}{\addtocounter{thm}{1}\end{equation}}
\newcommand{\bmye}{\begin{myequation}}
\newcommand{\emye}{\end{myequation}}
\begin{document}

\title{Lipschitz behavior of the robust regularization}

\author{Adrian S. Lewis and C.H. Jeffrey Pang }

\address{Adrian S. Lewis\\
School of Operations Research and Information Engineering \\
Cornell University\\
Ithaca, NY, 14853.}

\email{aslewis@orie.cornell.edu}

\address{C. H. Jeffrey Pang\\
Center for Applied Mathematics\\
Cornell University\\
Ithaca, NY, 14853.}

\email{cp229@cornell.edu}

\date{\today}

\begin{abstract}
To minimize or upper-bound the value of a function ``robustly'', we
might instead minimize or upper-bound the ``$\epsilon$-robust regularization'',
defined as the map from a point to the maximum value of the function
within an $\epsilon$-radius. This regularization may be easy to compute:
convex quadratics lead to semidefinite-representable regularizations,
for example, and the spectral radius of a matrix leads to pseudospectral
computations. For favorable classes of functions, we show that the
robust regularization is Lipschitz around any given point, for all
small $\epsilon>0$, even if the original function is nonlipschitz
(like the spectral radius). One such favorable class consists of the
semi-algebraic functions. Such functions have graphs that are finite
unions of sets defined by finitely-many polynomial inequalities, and
are commonly encountered in applications. 
\end{abstract}
\maketitle
\tableofcontents{}

\textbf{Key words:} robust optimization, nonsmooth analysis, locally
Lipschitz, regularization, semi-algebraic, pseudospectrum, robust
control, semidefinite representable, prox-regularity.

\textbf{AMS subject classifications:} 93B35, 49K40, 65K10, 90C30,
15A18, 14P10.

\section{\label{sec:Introduction}Introduction}

In the implementation of the optimal solution of an optimization model,
one is not only concerned with the minimizer of the optimization model,
but how numerical errors and perturbations in the problem description
and implementation can affect the solution. We might therefore try
to solve an optimization model in a robust manner. The issues of robust
optimization, particularly in the case of linear and quadratic programming,
are documented in \cite{BtN}. 

A formal way to address robustness is to consider the {}``robust
regularization'' \cite{L02}. The notation {}``$\rightrightarrows$''
denotes a set-valued map. That is, if $F:X\rightrightarrows Y$ and
$x\in X$, then $F\left(x\right)$ is a subset of $Y$.

\begin{defn}
\label{def:robust-regularization}For $\epsilon>0$ and $F:X\rightarrow\mathbb{R}^{m}$,
where $X\subset\mathbb{R}^{n}$, the \emph{set-valued robust regularization}
$F_{\epsilon}:X\rightrightarrows\mathbb{R}^{m}$ is defined as \[
F_{\epsilon}\left(x\right):=\left\{ F\left(x+e\right)\mid\left|e\right|\leq\epsilon,x+e\in X\right\} .\]
For the particular case of a real-valued function $f:X\rightarrow\ \mathbb{R}$,
we define the \emph{robust regularization} $\bar{f}_{\epsilon}:X\rightarrow\mathbb{R}$
of $f$ by \begin{eqnarray*}
\bar{f}_{\epsilon}\left(x\right) & := & \sup\left\{ y\in f_{\epsilon}\left(x\right)\right\} \\
 & = & \sup\left\{ y\mid\exists x^{\prime}\in X\mbox{ such that }f\left(x^{\prime}\right)=y\mbox{ and }\left|x^{\prime}-x\right|\leq\epsilon\right\} .\end{eqnarray*}

\end{defn}
In this paper, we restrict our attention to the real-valued robust
regularization $\bar{f}_{\epsilon}:X\rightarrow\mathbb{R}$. The use
of set-valued analysis is restricted to Section \ref{sec:general-r-r}.

The minimizer of the robust regularization protects against small
perturbations better, and might be a better solution to implement.
We illustrate with the example \[
f\left(x\right)=\left\{ \begin{array}{ll}
-x & \mbox{ if }x<0\\
\sqrt{x} & \mbox{ if }x\geq0.\end{array}\right.\]
The robust regularization can be quickly calculated to be \[
\bar{f}_{\epsilon}\left(x\right)=\left\{ \begin{array}{ll}
\epsilon-x & \mbox{ if }x<\alpha\left(\epsilon\right)\\
\sqrt{\epsilon+x} & \mbox{ if }x\geq\alpha\left(\epsilon\right),\end{array}\right.\]
where $\alpha\left(\epsilon\right)=\frac{1+2\epsilon-\sqrt{1+8\epsilon}}{2}>-\epsilon$.
The minimizer of $f$ is $\alpha\left(0\right)$, and $f$ is not
Lipschitz there. To see this, observe that $\frac{f\left(\delta\right)-f\left(0\right)}{\delta-0}\rightarrow\infty$
as $\delta\rightarrow0$. But the robust regularization $\bar{f}_{\epsilon}$
is Lipschitz at its minimizer $\alpha\left(\epsilon\right)$; its
left and right derivatives there are $-1$ and $\frac{1}{2\sqrt{\epsilon+\alpha\left(\epsilon\right)}}$,
which are both finite. 

The sensitivity of $f$ at $0$ can be attributed to the lack of Lipschitz
continuity there. Lipschitz continuity is important in variational
analysis, and is well studied in the recent books \cite{RW98,Mor06}.
The existence of a finite Lipschitz constant on $f$ close to the
optimizer can be important in the problems from which the optimization
problem was derived.

There are two main aims in this paper. The first aim is to show that
robust regularization has a regularizing property: Even if the original
function $f$ is not Lipschitz at a point $x$, the robust regularization
can be Lipschitz there under various conditions. For example, in Corollary
\ref{cor:robust-regularization-lipschitz}, we prove that if the set
of points at which $f$ is not Lipschitz is isolated, then the robust
regularization $\bar{f}_{\epsilon}$ is Lipschitz at these points
for all small $\epsilon>0$. The second aim is to highlight the relationship
between calmness and Lipschitz continuity, a topic important in the
study of metric regularity, and studied in some generality for set-valued
mappings (for example, in \cite[Theorem 2.1]{Li94}, \cite[Theorem 1.5]{Rob07})
but exploited less for single-valued mappings.

In Theorem \ref{thm:lip<infty}, we prove that if $f:\mathbb{R}^{n}\rightarrow\mathbb{R}$
is semi-algebraic and continuous, then given any point in $\mathbb{R}^{n}$,
the robust regularization $\bar{f}_{\epsilon}$ is Lipschitz there
for all small $\epsilon>0$. Semi-algebraic functions are functions
whose graph can be defined by a finite union of sets defined by finitely
many polynomial equalities and inequalities, and is a broad class
of functions in applications. (For example, piecewise polynomial functions,
rational functions and the mapping from a matrix to its eigenvalues
are all semi-algebraic functions.) Moreover, the Lipschitz modulus
of $\bar{f}_{\epsilon}$ at $\bar{x}$ is of order $o\left(\frac{1}{\epsilon}\right)$.
This estimate of the Lipschitz modulus can be helpful for robust design.

Several interesting examples of robust regularization are tractable
to compute and optimize. For example, the robust regularization of
any strictly convex quadratic is a semidefinite -representable function,
tractable via semidefinite programming: see Section \ref{sec:quad-exa}.
The robust regularizations of the spectral abscissa and radius of
a nonsymmetric square matrix, which are the largest real part and
the largest norm respectively of the eigenvalues of a matrix, are
two more interesting examples. The robust regularization of the spectral
abscissa and spectral radius are also known as the pseudospectral
abscissa and the pseudospectral radius. The pseudospectral abscissa
is important in the study of the system $\frac{d}{dt}u\left(t\right)=Au\left(t\right)$,
and is easily calculated using the algorithm in \cite{BLO02Pseudo},
while the pseudospectral radius is important in the study of the system
$u_{t+1}=Au_{t}$, and is easily calculated using the algorithm in
\cite{MO05}. We refer the reader to \cite{Tre06} for more details
on the importance of the pseudospectral abscissa and radius in applications.
The spectral abscissa is nonlipschitz whenever the eigenvalue with
the largest real part has a nontrivial Jordan block. But for a fixed
matrix, the pseudospectral abscissa is Lipschitz there for all $\epsilon\in\left(0,\bar{\epsilon}\right)$
if $\bar{\epsilon}>0$ is small enough \cite{LP07}. We rederive this
result here, using a much more general approach.

\section{\label{sec:Calmness}Calmness as an extension to Lipschitzness}

We begin by discussing the relation between calmness and Lipschitz
continuity, which will be important in the proofs in Section \ref{sec:Semialgebraic}
later. Throughout the paper, we will limit ourselves to the single-valued
case. For more on these topics and their set-valued extensions, we
refer the reader to \cite{RW98}.

\begin{defn}
\label{def:calm-and-lip}Let $F:X\rightarrow\mathbb{R}^{m}$ be a
single-valued map, where $X\subset\mathbb{R}^{n}$.

(a) \cite[Section 8F]{RW98} Define the \emph{calmness modulus} of
$F$ at $\bar{x}$ with respect to $X$ to be \begin{eqnarray*}
\calmx F\left(\bar{x}\right) & := & \inf\{\kappa\mid\mbox{There is a neighbourhood }V\mbox{ of }\bar{x}\mbox{ such that }\\
 &  & \,\,\,\,\,\,\,\,\,\,\,\,\,\,\,\,\,\,\left|F\left(x\right)-F\left(\bar{x}\right)\right|\leq\kappa\left|x-\bar{x}\right|\mbox{ for all }x\in V\cap X\}\\
 & = & \limsup_{x\xrightarrow[X]{}\bar{x}}\frac{\left|F\left(x\right)-F\left(\bar{x}\right)\right|}{\left|x-\bar{x}\right|}.\end{eqnarray*}
Here, $x\xrightarrow[X]{}\bar{x}$ means that $x\in X$ and $x\rightarrow\bar{x}$.
The function $F$ is \emph{calm} at $\bar{x}$ with respect to $X$
if $\calmx F\left(\bar{x}\right)<\infty$. 

(b)\cite[Definition 9.1]{RW98} Define the \emph{Lipschitz modulus}
of $F$ at $\bar{x}$ with respect to $X$ to be \begin{eqnarray*}
\lipx F\left(\bar{x}\right) & := & \inf\{\kappa\mid\mbox{There is a neighbourhood }V\mbox{ of }\bar{x}\mbox{ such that }\\
 &  & \,\,\,\,\,\,\,\,\,\,\,\,\,\,\,\,\,\,\left|F\left(x\right)-F\left(x^{\prime}\right)\right|\leq\kappa\left|x-x^{\prime}\right|\mbox{ for all }x,x^{\prime}\in V\cap X\}\\
 & = & \limsup_{{x,x^{\prime}\xrightarrow[X]{}\bar{x}\atop x\neq x^{\prime}}}\frac{\left|F\left(x\right)-F\left(x^{\prime}\right)\right|}{\left|x-x^{\prime}\right|}.\end{eqnarray*}
The function $F$ is \emph{Lipschitz} at $\bar{x}$ with respect to
$X$ if $\lipx F\left(\bar{x}\right)<\infty$. $\diamond$
\end{defn}
The definitions differ slightly from that of \cite{RW98}. As can
be seen in the definitions, Lipschitz continuity is a more stringent
form of continuity than calmness. In fact, they are related in the
following manner. 

\begin{prop}
\label{pro:lip-eq-cl-calm}Suppose that $F:X\rightarrow\mathbb{R}^{m}$
where $X\subset\mathbb{R}^{n}$. 

(a) $\limsup_{x\xrightarrow[X]{}\bar{x}}\calmx F\left(x\right)\leq\lipx F\left(\bar{x}\right)$.

(b) If there is an open set $U$ containing $\bar{x}$ such that $U\cap X$
is convex, then $\lipx F\left(\bar{x}\right)=\limsup_{x\xrightarrow[X]{}\bar{x}}\calmx F\left(x\right)$.

\end{prop}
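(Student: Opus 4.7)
For part (a), my plan is a direct unpacking of definitions. Given any $\kappa > \lipx F(\bar{x})$, by the infimum definition there is a neighborhood $V$ of $\bar{x}$ with $|F(y) - F(y')| \leq \kappa |y - y'|$ for all $y, y' \in V \cap X$. Since $V$ is open, for each $x \in V \cap X$ the specialization to $y' = x$ shows $|F(y) - F(x)| \leq \kappa |y - x|$ on some neighborhood of $x$ intersected with $X$, giving $\calmx F(x) \leq \kappa$. Hence $\limsup_{x \xrightarrow[X]{} \bar{x}} \calmx F(x) \leq \kappa$, and sending $\kappa \downarrow \lipx F(\bar{x})$ finishes (a).

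For part (b), in view of (a) only the reverse inequality needs proof. Set $L := \limsup_{x \xrightarrow[X]{} \bar{x}} \calmx F(x)$ and fix any $\lambda > L$; the goal is $\lipx F(\bar{x}) \leq \lambda$. I would first shrink the neighborhood to an open ball $V \subset U$ around $\bar{x}$ small enough that $\calmx F(x) < \lambda$ for every $x \in V \cap X$. Then $V \cap X = V \cap (U \cap X)$ is convex as an intersection of convex sets. For arbitrary $x, x' \in V \cap X$, the segment $\gamma(t) := (1-t)x + tx'$ stays in $V \cap X$ for $t \in [0,1]$, and I want $|F(x') - F(x)| \leq \lambda |x' - x|$.

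The key step is a connectedness argument on $[0,1]$. Let
\[
T := \{ t \in [0,1] : |F(\gamma(t)) - F(\gamma(0))| \leq \lambda\, t\, |x' - x| \}.
\]
Then $0 \in T$, and I claim $T$ is closed and that any $t_0 \in T$ with $t_0 < 1$ admits some $t_1 > t_0$ in $T$; the usual supremum argument then yields $T = [0,1]$, and $t = 1$ gives the desired bound. Closedness follows from continuity of $t \mapsto F(\gamma(t))$, itself a consequence of calmness at each $\gamma(t)$. For the extension step, the strict inequality $\calmx F(\gamma(t_0)) < \lambda$ produces $\delta > 0$ with $|F(y) - F(\gamma(t_0))| \leq \lambda |y - \gamma(t_0)|$ for $y \in V \cap X$ within $\delta$ of $\gamma(t_0)$; choosing $t_1 \in (t_0, 1]$ with $(t_1 - t_0)|x' - x| < \delta$, convexity places $\gamma(t_1) \in V \cap X$ within $\delta$ of $\gamma(t_0)$, and the triangle inequality yields
\[
|F(\gamma(t_1)) - F(\gamma(0))| \leq \lambda(t_1 - t_0)|x'-x| + \lambda\, t_0\, |x'-x| = \lambda\, t_1\, |x'-x|.
\]
Finally, letting $\lambda \downarrow L$ completes (b).

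The main obstacle I anticipate is the extension step, which relies on both pieces of the hypothesis in tandem: convexity of $V \cap X$ (so $\gamma(t_1)$ lies in $X$ and the local calmness bound at $\gamma(t_0)$ actually applies) and the strict inequality $\calmx F(\gamma(t_0)) < \lambda$ (which delivers a bound with constant exactly $\lambda$, not $\lambda + \varepsilon$, and so lets the two summands in the triangle inequality close up cleanly to $\lambda t_1 |x' - x|$).
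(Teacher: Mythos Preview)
Your proof is correct and follows essentially the same approach as the paper's: part (a) is a direct unpacking of the definitions, and part (b) uses the line segment in the convex set $V\cap X$ together with the local calmness bounds to propagate the estimate from one endpoint to the other. The only cosmetic difference is that the paper stitches the local bounds along the segment via a finite-cover compactness argument, whereas you use an equivalent supremum/connectedness argument on the set $T\subset[0,1]$.
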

\begin{proof}
To simplify notation, let $\kappa:=\limsup_{x\xrightarrow[X]{}\bar{x}}\calmx F\left(x\right)$. 

(a) For any $\epsilon>0$, we can find a point $x_{\epsilon}$ such
that $\left|\bar{x}-x_{\epsilon}\right|<\epsilon$ and $\calmx F\left(x_{\epsilon}\right)>\kappa-\epsilon$.
Then we can find a point $\tilde{x}_{\epsilon}$ such that $\left|x_{\epsilon}-\tilde{x}_{\epsilon}\right|<\epsilon$
and $\left|F\left(x_{\epsilon}\right)-F\left(\tilde{x}_{\epsilon}\right)\right|>\left(\kappa-\epsilon\right)\left|x_{\epsilon}-\tilde{x}_{\epsilon}\right|$.
As $\epsilon$ can be made arbitrarily small, we have $\kappa\leq\lipx F\left(\bar{x}\right)$
as needed. 

(b) For every $\epsilon>0$ , there is some neighborhood of $\bar{x}$,
say $\mathbb{B}_{\delta}\left(\bar{x}\right)$, such that \[
\calmx F\left(x\right)\leq\kappa+\epsilon\mbox{ if }x\in\mathbb{B}_{\delta}\left(\bar{x}\right)\cap X.\]
For any $y,z\in\mathbb{B}_{\delta}\left(\bar{x}\right)\cap X$, consider
the line segment joining $y$ and $z$, which we denote $\left[y,z\right]$.
As $\calmx F\left(\tilde{x}\right)\leq\kappa+\epsilon$ for all $\tilde{x}\in\left[y,z\right]$,
there is a neighborhood around $\tilde{x}$, say $V_{\tilde{x}}$,
such that $\left|F\left(\hat{x}\right)-F\left(\tilde{x}\right)\right|\leq\left(\kappa+2\epsilon\right)\left|\hat{x}-\tilde{x}\right|$
for all $\hat{x}\in V_{\tilde{x}}\cap X$. 

As $\left[y,z\right]$ is compact, choose finitely many $\tilde{x}$
such that the union of $V_{\tilde{x}}$ covers $\left[y,z\right]$.
We can add $y$ and $z$ into our choice of points and rename them
as $\tilde{x}_{1},\dots,\tilde{x}_{k}$ in their order on the line
segment $\left[y,z\right]$, with $\tilde{x}_{1}=y$ and $\tilde{x}_{k}=z$.
Also, we can find a point $\hat{x}_{i}$ between $\tilde{x}_{i}$
and $\tilde{x}_{i+1}$ such that $\hat{x}_{i}\in V_{\tilde{x}_{i}}\cap V_{\tilde{x}_{i+1}}$.
Therefore, we add these $\hat{x}_{i}$ into $\tilde{x}_{1},\dots,\tilde{x}_{k}$
and get a new set $x_{1},\dots,x_{K}$, again in their order on the
line segment and $x_{1}=y$, $x_{K}=z$. 

We have:\begin{eqnarray*}
\left|F\left(y\right)-F\left(z\right)\right| & \leq & \sum_{i=1}^{K-1}\left|F\left(x_{i}\right)-F\left(x_{i+1}\right)\right|\\
 & \leq & \sum_{i=1}^{K-1}\left(\kappa+2\epsilon\right)\left|x_{i}-x_{i+1}\right|\\
 & \leq & \left(\kappa+2\epsilon\right)\left|y-z\right|,\end{eqnarray*}
and as $\epsilon$ is arbitrary, $\lipx F\left(\bar{x}\right)\leq\kappa$
as claimed.
\end{proof}
Convexity is a strong assumption here, but some analogous condition
is needed, as the following examples show.

\begin{example}
(a) Consider the set $X\subset\mathbb{R}$ defined by \[
X=\left(\bigcup_{i=1}^{\infty}\left[\frac{1}{3^{i}},\frac{2}{3^{i}}\right]\right)\cup\left\{ 0\right\} ,\]
 and define the function $F:X\rightarrow\mathbb{R}$ by \[
F\left(x\right)=\left\{ \begin{array}{ll}
\frac{1}{3^{i}} & \mbox{if }\frac{1}{3^{i}}\leq x\leq\frac{2}{3^{i}},\\
0 & \mbox{if }x=0.\end{array}\right.\]
It is clear that $\calmx F\left(x\right)=0$ for all $x\in X\backslash\left\{ 0\right\} $
since $F$ is constant on each component of $X$, and $\calmx F\left(0\right)=1$.
But \begin{eqnarray*}
\lipx F\left(0\right) & = & \lim_{i\rightarrow\infty}\frac{F\left(\frac{1}{3^{i}}\right)-F\left(\frac{2}{3^{i+1}}\right)}{\frac{1}{3^{i}}-\frac{2}{3^{i+1}}}\\
 & = & \lim_{i\rightarrow\infty}\frac{\frac{1}{3^{i}}-\frac{1}{3^{i+1}}}{\frac{1}{3^{i}}-\frac{2}{3^{i+1}}}\\
 & = & 2.\end{eqnarray*}
Thus, $\limsup_{x\rightarrow0}\calmx F\left(x\right)<\lipx F\left(0\right)$.

(b) Consider $X\subset\mathbb{R}^{2}$ defined by $X:=\left\{ \left(x_{1},x_{2}\right)\mid x_{2}^{2}=x_{1}^{4}\right\} $
and the function $F:\mathbb{R}^{2}\rightarrow\mathbb{R}$ defined
by $F\left(x_{1},x_{2}\right)=x_{2}$. One can easily check that $\limsup_{x\rightarrow0}\calmx F\left(x\right)=0$
and $\lipx F\left(0,0\right)=1$. This is an example of a semi-algebraic
function where inequality holds.$\diamond$
\end{example}
Note that $\calm F\left(\bar{x}\right)$ can be strictly smaller than
$\lip F\left(\bar{x}\right)$ even if $X$ is convex, as demonstrated
below.

\begin{example}
(a) Consider $F:\mathbb{R}\rightarrow\mathbb{R}$ defined by \[
F\left(x\right)=\left\{ \begin{array}{ll}
0 & \mbox{if }x=0,\\
x^{2}\sin\left(\frac{1}{x^{2}}\right) & \mbox{otherwise.}\end{array}\right.\]
Here, $\calm F\left(0\right)=0$, but $\lip F\left(0\right)=\infty$.

(b) \label{exa:calm-neq-lip}Consider $F:\mathbb{R}^{2}\rightarrow\mathbb{R}$
defined by: \[
F\left(x_{1},x_{2}\right)=\left\{ \begin{array}{ll}
0 & \mbox{if }x_{1}\leq0\\
x_{1} & \mbox{if }0\leq x_{1}\leq x_{2}/2\\
-x_{1} & \mbox{if }0\leq x_{1}\leq-x_{2}/2\\
2x_{2} & \mbox{if }x_{1}\geq\left|x_{2}\right|/2.\end{array}\right.\]
We can calculate $\calm F\left(0,0\right)=2/\sqrt{5}$, and $\lip F\left(0,0\right)=2$,
so this gives $\calm F\left(0,0\right)<\lip F\left(0,0\right)$. This
is an example of a semi-algebraic function where inequality holds.$\diamond$
\end{example}
At this point, we make a remark about subdifferentially regular functions.
We recall the definition of subdifferential regularity.

\begin{defn}
\cite[Definition 8.3]{RW98} \label{def:subdifferential}Consider
a function $f:\mathbb{R}^{n}\rightarrow\mathbb{R}\cup\left\{ \infty\right\} $
and a point $\bar{x}$ with $f\left(\bar{x}\right)$ finite. For a
vector $v\in\mathbb{R}^{n}$, one says that 

(a) $v$ is a \emph{regular subgradient} of $f$ at $\bar{x}$, written
$v\in\hat{\partial}f\left(\bar{x}\right)$, if \[
f\left(x\right)\geq f\left(\bar{x}\right)+\left\langle v,x-\bar{x}\right\rangle +o\left(\left|x-\bar{x}\right|\right);\]

(b) $v$ is a \emph{(general) subgradient} of $f$ at $\bar{x}$,
written $v\in\partial f\left(\bar{x}\right)$, if there are sequences
$x^{\nu}\rightarrow\bar{x}$ and $v^{\nu}\in\hat{\partial}f\left(x^{\nu}\right)$
with $v^{\nu}\rightarrow v$ and $f\left(x^{\nu}\right)\rightarrow f\left(\bar{x}\right)$.

(c) If $f$ is Lipschitz continuous at $\bar{x}$, then $f$ is \emph{subdifferentially
regular} if $\hat{\partial}f\left(\bar{x}\right)=\partial f\left(\bar{x}\right)$. 
\end{defn}
Though the definition of subdifferential regularity differs from that
given in \cite[Definition 7.25]{RW98}, it can be deduced from \cite[Corollary 8.11, Theorem 9.13 and Theorem 8.6]{RW98}
when $f$ is Lipschitz, and is simple enough for our purposes. Subdifferentially
regular functions are important and well-studied in variational analysis.
The class of subdifferentially regular functions is closed under sums
and pointwise maxima, and includes smooth functions and convex functions.
It turns out that the calmness and Lipschitz moduli are equal for
subdifferentially regular functions.

\begin{prop}
If $f:\mathbb{R}^{n}\rightarrow\mathbb{R}\cup\left\{ \infty\right\} $
is Lipschitz continuous at $\bar{x}$ and subdifferentially regular
there, then $\calm f\left(\bar{x}\right)=\lip f\left(\bar{x}\right)$.
\end{prop}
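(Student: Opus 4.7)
The inequality $\calm f(\bar{x}) \leq \lip f(\bar{x})$ is immediate from the definitions (and is a special case of Proposition \ref{pro:lip-eq-cl-calm}(a), taking $x = \bar{x}$ inside the limsup). The content of the claim is the reverse inequality, and my plan is to route through the subgradient characterization of the Lipschitz modulus rather than through Proposition \ref{pro:lip-eq-cl-calm}(b) directly (the latter reduces $\lip f(\bar{x})$ to $\limsup_{x \to \bar{x}} \calm f(x)$, but does not obviously relate that limsup to the value $\calm f(\bar{x})$ itself).

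The first step is to invoke the standard fact from \cite[Theorem 9.13]{RW98} that, for $f$ Lipschitz continuous at $\bar{x}$,
\[
\lip f(\bar{x}) = \max\{|v| : v \in \partial f(\bar{x})\}.
\]
By the assumed subdifferential regularity, $\partial f(\bar{x}) = \hat{\partial} f(\bar{x})$, and so
\[
\lip f(\bar{x}) = \max\{|v| : v \in \hat{\partial} f(\bar{x})\}.
\]

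The key step is then to show $\calm f(\bar{x}) \geq |v|$ for every regular subgradient $v \in \hat{\partial} f(\bar{x})$. For $v = 0$ this is trivial. For $v \neq 0$, I would test the definition of $\calm f(\bar{x})$ along the direction $v$: by Definition \ref{def:subdifferential}(a), taking $x_t := \bar{x} + t v/|v|$ for small $t > 0$,
\[
f(x_t) - f(\bar{x}) \geq \langle v, x_t - \bar{x}\rangle + o(|x_t - \bar{x}|) = t|v| + o(t),
\]
so that $|f(x_t) - f(\bar{x})|/|x_t - \bar{x}| \geq |v| + o(1)$. Letting $t \to 0^+$ and using the limsup formula for $\calm f(\bar{x})$ from Definition \ref{def:calm-and-lip}(a) yields $\calm f(\bar{x}) \geq |v|$.

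Maximizing over $v \in \hat{\partial} f(\bar{x})$ then gives $\calm f(\bar{x}) \geq \lip f(\bar{x})$, closing the loop. The only nontrivial ingredient is the subgradient formula for the Lipschitz modulus cited in the first step; the rest is a one-line test of the regular subgradient inequality along a well-chosen ray.
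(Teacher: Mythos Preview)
Your proposal is correct and follows essentially the same route as the paper: both invoke \cite[Theorem 9.13]{RW98} to write $\lip f(\bar{x}) = \max\{|v| : v \in \partial f(\bar{x})\}$, use subdifferential regularity to pass to regular subgradients, and then test the defining inequality for $v \in \hat{\partial} f(\bar{x})$ along the ray $\bar{x} + tv$ (the paper uses $tv$ directly, you use $tv/|v|$, which is cosmetic) to obtain $\calm f(\bar{x}) \geq |v|$.
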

\begin{proof}
By \cite[Theorem 9.13]{RW98}, $\lip f\left(\bar{x}\right)=\max\left\{ \left|v\right|\mid v\in\partial f\left(\bar{x}\right)\right\} $.
If $v\in\partial f\left(\bar{x}\right)$, then $v\in\hat{\partial}f\left(\bar{x}\right)$,
and we observe that $\calm f\left(\bar{x}\right)\geq\left|v\right|$
because \begin{eqnarray*}
f\left(\bar{x}+tv\right) & \geq & f\left(\bar{x}\right)+\left\langle v,tv\right\rangle +o\left(\left|t\right|\right)\\
 & = & f\left(\bar{x}\right)+\left|v\right|\left|tv\right|+o\left(\left|t\right|\right).\end{eqnarray*}
Therefore $\calm f\left(\bar{x}\right)\leq\lip f\left(\bar{x}\right)=\max\left\{ \left|v\right|\mid v\in\partial f\left(\bar{x}\right)\right\} \leq\calm f\left(\bar{x}\right)$,
which implies that all three terms are equal.
\end{proof}

\section{Calmness and robust regularization}

Recall the definition of robust regularization in Definition \ref{def:robust-regularization}.
To study robust regularization, it is useful to study the dependence
of $\bar{f}_{\epsilon}\left(x\right)$ on $\epsilon$ instead of on
$x$. For a point $x\in X$, define $g_{x}:\mathbb{R}_{+}\rightarrow\mathbb{R}$
by \[
g_{x}\left(\epsilon\right)=\bar{f}_{\epsilon}\left(x\right).\]
To simplify notation, we write $g\equiv g_{x}$ if it is clear from
context. Here are a few basic properties of $g_{x}$.

\begin{prop}
\label{pro:ppties_of_g}For $f:X\rightarrow\mathbb{R}$ and $g_{x}$
as defined above, we have the following:

(a) $g_{x}$ is monotonically nondecreasing. 

(b) If $f$ is continuous in a neighborhood of $x$, then $g_{x}$
is continuous in a neighborhood of $0$.
\end{prop}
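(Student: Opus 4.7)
The plan for part (a) is essentially a one-liner from the definition: if $0 \leq \epsilon_1 \leq \epsilon_2$, the feasible set $\{x + e : |e| \leq \epsilon_1,\, x+e \in X\}$ is contained in the corresponding set for $\epsilon_2$, so the defining supremum can only grow, giving $g_x(\epsilon_1) \leq g_x(\epsilon_2)$.

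For part (b), I would exploit (a): since $g_x$ is nondecreasing, it admits left and right limits at every $\epsilon \geq 0$, so continuity at $\epsilon$ reduces to showing $g_x(\epsilon^-) = g_x(\epsilon) = g_x(\epsilon^+)$. Fix $\epsilon_0 > 0$ small enough that $f$ is continuous on the intersection of the closed ball $\bar{B}(x, \epsilon_0)$ with $X$ and that this intersection is compact --- both harmless, for small $\epsilon_0$, when $x \in \inT X$, the regime of interest. Then for each $\epsilon \in [0, \epsilon_0)$ the supremum defining $\bar{f}_\epsilon(x)$ is attained at some $x^*_\epsilon \in X$ with $|x^*_\epsilon - x| \leq \epsilon$.

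For right continuity at $\epsilon$, I would pick $\delta_n \downarrow 0$ and pass to a convergent subsequence of the maximizers $x^*_{\epsilon+\delta_n}$ (by compactness) with limit $\bar{y}$ satisfying $|\bar{y} - x| \leq \epsilon$; continuity of $f$ then yields $g_x(\epsilon + \delta_n) = f(x^*_{\epsilon+\delta_n}) \to f(\bar{y}) \leq g_x(\epsilon)$, which combines with monotonicity to give equality. Right continuity at $0$ is the $\epsilon = 0$ case of this argument. For left continuity at $\epsilon > 0$, I would slide the maximizer $x^*_\epsilon$ radially toward $x$ along $y_\delta := x + (1 - \delta/\epsilon)(x^*_\epsilon - x)$, so that $|y_\delta - x| \leq \epsilon - \delta$ and $y_\delta \in X$ (once $\bar{B}(x, \epsilon) \subset X$), and $y_\delta \to x^*_\epsilon$ as $\delta \downarrow 0$; then continuity of $f$ at $x^*_\epsilon$ gives $g_x(\epsilon - \delta) \geq f(y_\delta) \to f(x^*_\epsilon) = g_x(\epsilon)$, again matching the bound from monotonicity.

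The main obstacle is set-theoretic rather than analytic: the whole argument rests on $\bar{B}(x, \epsilon_0) \cap X$ being compact (to attain the supremum) and on the radial segment from $x$ to $x^*_\epsilon$ actually lying in $X$ (to use it as a test point for $g_x(\epsilon - \delta)$). Both are automatic when $x \in \inT X$ and $\epsilon_0$ is small; on the boundary of $X$ one instead has to work with near-maximizers $\tilde x_\delta$ satisfying $f(\tilde x_\delta) > g_x(\epsilon) - \delta$, and may need an auxiliary hypothesis (e.g.\ closedness of $X$) to prevent the sup from escaping to a limit outside $X$.
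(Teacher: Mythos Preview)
Your argument is correct and follows essentially the same route as the paper: part (a) is immediate from the inclusion of feasible sets, and for part (b) both you and the paper split into right and left continuity, handle the right limit by extracting a convergent subsequence of maximizers and invoking continuity of $f$, and handle the left limit by using continuity of $f$ near a maximizer $\hat{x}$ to produce nearby test points inside the smaller ball. Your version is in fact a bit more explicit than the paper's --- you spell out the radial slide $y_\delta = x + (1-\delta/\epsilon)(x^*_\epsilon - x)$ and flag the hidden hypotheses (attainment of the sup, the segment lying in $X$) that the paper's proof uses without comment.
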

\begin{proof}
Part (a) is obvious. For part (b), we prove the left and right limits
separately. Suppose that $\epsilon_{i}\downarrow\epsilon$. There
is a sequence of $x_{i}$ such that $f\left(x_{i}\right)=\bar{f}_{\epsilon_{i}}\left(x\right)$,
and $\left|x_{i}-x\right|\leq\epsilon_{i}$. We assume, by choosing
a subsequence if needed, that $\lim_{i\rightarrow\infty}x_{i}=\tilde{x}$.
We have $\left|\tilde{x}-x\right|\leq\epsilon$, and since $f$ is
continuous, $f\left(x_{i}\right)\rightarrow f\left(\tilde{x}\right)$.
This means that \[
\bar{f}_{\epsilon}\left(x\right)\geq f\left(\tilde{x}\right)=\lim_{i\rightarrow\infty}\bar{f}_{\epsilon_{i}}\left(x\right),\]
which implies $g\left(\epsilon\right)\geq\limsup_{\tilde{\epsilon}\downarrow\epsilon}g\left(\tilde{\epsilon}\right)$.
The monotonicity of $g$ tells us that $g\left(\epsilon\right)=\lim_{\tilde{\epsilon}\downarrow\epsilon}g\left(\tilde{\epsilon}\right)$.

Next, suppose that $\epsilon_{i}$ increases monotonically to $\epsilon$.
Let $\hat{x}$ be such that $f\left(\hat{x}\right)=\bar{f}_{\epsilon}\left(x\right)$,
with $\left|\hat{x}-x\right|\leq\epsilon$. Since $f$ is continuous,
for every $\delta_{1}>0$, there is a $\delta_{2}>0$ such that $\left|f\left(x^{\prime}\right)-f\left(\hat{x}\right)\right|<\delta_{1}$
if $\left|x^{\prime}-\hat{x}\right|<\delta_{2}$. This means that
if $\epsilon-\epsilon_{i}<\delta_{2}$, then \[
\bar{f}_{\epsilon_{i}}\left(x\right)\geq f\left(\hat{x}\right)-\delta_{1}=\bar{f}_{\epsilon}\left(x\right)-\delta_{1}.\]
 As $\delta_{1}$ can be made arbitrarily small, we conclude that
$\lim_{\tilde{\epsilon}\uparrow\epsilon}\bar{f}_{\tilde{\epsilon}}\left(x\right)=\bar{f}_{\epsilon}\left(x\right)$,
or $\lim_{\tilde{\epsilon}\uparrow\epsilon}g\left(\tilde{\epsilon}\right)=g\left(\epsilon\right)$.
\end{proof}
It turns out that calmness of the robust regularization is related
to the derivative of $g_{x}$.

\begin{prop}
\label{pro:calm-eq-derivative} If $f:X\rightarrow\mathbb{R}$ and
$\epsilon>0$, then $\calmx\bar{f}_{\epsilon}\left(x\right)\leq\calm g_{x}\left(\epsilon\right)$.
If in addition $X=\mathbb{R}^{n}$ and $g_{x}$ is differentiable
at $\epsilon$, then \[
\calm\bar{f}_{\epsilon}\left(x\right)=\calm g_{x}\left(\epsilon\right)=g_{x}^{\prime}\left(\epsilon\right).\]

\end{prop}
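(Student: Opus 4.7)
The plan is to sandwich $\bar{f}_\epsilon(x')$ at points $x'$ near $x$ by the values $g_x(\epsilon\pm|x-x'|)$ via triangle inequalities, giving the upper bound $\calmx\bar{f}_\epsilon(x)\leq \calm g_x(\epsilon)$ almost by inspection; and then, under the additional hypotheses, to attain that upper bound by perturbing $x$ in the direction of an almost-optimal $(\epsilon+t)$-perturbation.

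For the first inequality, any $e'$ with $|e'|\leq\epsilon$ and $x'+e'\in X$ satisfies $|(x'+e')-x|\leq \epsilon+|x-x'|$, giving $\bar{f}_\epsilon(x')\leq g_x(\epsilon+|x-x'|)$; dually any $e$ with $|e|\leq \epsilon-|x-x'|$ and $x+e\in X$ satisfies $|(x+e)-x'|\leq\epsilon$, giving $g_x(\epsilon-|x-x'|)\leq \bar{f}_\epsilon(x')$. Combined with $\bar{f}_\epsilon(x)=g_x(\epsilon)$, these yield
\[
|\bar{f}_\epsilon(x')-\bar{f}_\epsilon(x)|\ \leq\ \max\bigl\{\,g_x(\epsilon+|x-x'|)-g_x(\epsilon),\ g_x(\epsilon)-g_x(\epsilon-|x-x'|)\,\bigr\},
\]
and dividing by $|x-x'|$ and taking $\limsup$ as $x'\to x$ gives the claim.

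For the additional assertions, monotonicity of $g_x$ (Proposition~\ref{pro:ppties_of_g}(a)) together with differentiability at $\epsilon$ immediately forces $\calm g_x(\epsilon)=g_x'(\epsilon)\geq 0$, so the second equality is free and only the matching lower bound $\calm\bar{f}_\epsilon(x)\geq g_x'(\epsilon)$ remains. For each small $t>0$ I would choose $e_t$ with $|e_t|\leq \epsilon+t$ and $f(x+e_t)\geq g_x(\epsilon+t)-t^2$, and, when $e_t\neq 0$, set $x':=x+t\,e_t/|e_t|$; the hypothesis $X=\mathbb{R}^n$ ensures $x'\in X$, and a direct computation gives $|(x+e_t)-x'|=\bigl|\,|e_t|-t\,\bigr|\leq \epsilon$, whence $\bar{f}_\epsilon(x')\geq f(x+e_t)\geq g_x(\epsilon+t)-t^2$. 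Dividing by $|x'-x|=t$ and letting $t\downarrow 0$ drives the right-hand side to $g_x'(\epsilon)$.

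The delicate point I expect is the degenerate situation in which no near-optimizer with $e_t\neq 0$ exists. In that case $f(x)\geq g_x(\epsilon+t)-t^2$ for all small $t$, so $g_x(\epsilon+t)-g_x(\epsilon)\leq t^2$ and therefore $g_x'(\epsilon)=0$, making the lower bound automatic. This is also the step where the hypothesis $X=\mathbb{R}^n$ is genuinely used: it lets the constructed $x'$ stay in the domain regardless of direction, whereas the first inequality is purely a containment-of-balls argument and needed no such freedom.
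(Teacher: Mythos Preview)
Your argument is correct and follows essentially the same route as the paper: both parts rest on the ball inclusions $\mathbb{B}_{\epsilon-|x-x'|}(x)\subset\mathbb{B}_\epsilon(x')\subset\mathbb{B}_{\epsilon+|x-x'|}(x)$ for the upper bound, and on moving $x$ a distance $t$ toward a near-maximizer in $\mathbb{B}_{\epsilon+t}(x)$ for the lower bound. Your version is in fact slightly more careful than the paper's in two places: you allow an approximate maximizer (the $-t^{2}$ slack) rather than assuming the supremum in $g_x(\epsilon+t)$ is attained, and you explicitly dispose of the degenerate case $e_t=0$, whereas the paper writes $\hat{x}_\delta=\frac{\delta}{|\tilde{x}_\delta-x|}(\tilde{x}_\delta-x)+x$ without comment on either issue.
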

\begin{proof}
For the first part, we proceed to show that if $\kappa>\calm g_{x}\left(\epsilon\right)$,
then $\kappa\geq\calm\bar{f}_{\epsilon}\left(x\right)$. If $\left|\tilde{x}-x\right|<\epsilon$,
we have \begin{eqnarray*}
\mathbb{B}_{\epsilon-\left|\tilde{x}-x\right|}\left(x\right) & \subset\mathbb{B}_{\epsilon}\left(\tilde{x}\right)\subset & \mathbb{B}_{\epsilon+\left|\tilde{x}-x\right|}\left(x\right),\end{eqnarray*}
which implies\begin{eqnarray*}
\bar{f}_{\epsilon-\left|\tilde{x}-x\right|}\left(x\right) & \leq\bar{f}_{\epsilon}\left(\tilde{x}\right)\leq & \bar{f}_{\epsilon+\left|\tilde{x}-x\right|}\left(x\right).\end{eqnarray*}
Then note that if $\tilde{x}$ is close enough to $x$, we have

\[
\bar{f}_{\epsilon}\left(\tilde{x}\right)\leq\bar{f}_{\epsilon+\left|\tilde{x}-x\right|}\left(x\right)=g_{x}\left(\epsilon+\left|\tilde{x}-x\right|\right)\leq g_{x}\left(\epsilon\right)+\kappa\left|\tilde{x}-x\right|,\]
and similarly \[
\bar{f}_{\epsilon}\left(\tilde{x}\right)\geq\bar{f}_{\epsilon-\left|\tilde{x}-x\right|}\left(x\right)=g_{x}\left(\epsilon-\left|\tilde{x}-x\right|\right)\geq g_{x}\left(\epsilon\right)-\kappa\left|\tilde{x}-x\right|,\]
which tells us that $\left|\bar{f}_{\epsilon}\left(\tilde{x}\right)-\bar{f}_{\epsilon}\left(x\right)\right|\leq\kappa\left|\tilde{x}-x\right|$,
which is what we need. 

For the second part, it is clear from the definition of the derivative
that $g_{x}^{\prime}\left(\epsilon\right)=\calm g_{x}\left(\epsilon\right)$.
We prove that if $\kappa<g_{x}^{\prime}\left(\epsilon\right)$, then
$\kappa\leq\calm\bar{f}_{\epsilon}\left(x\right)$. By the differentiability
of $g_{x}$, there is some $\bar{\delta}>0$ such that for any $0\leq\delta\leq\bar{\delta}$,
we have \begin{eqnarray*}
\bar{f}_{\epsilon+\delta}\left(x\right) & = & g_{x}\left(\epsilon+\delta\right)\\
 & > & g_{x}\left(\epsilon\right)+\kappa\delta\\
 & = & \bar{f}_{\epsilon}\left(x\right)+\kappa\delta.\end{eqnarray*}
For any $0\leq\delta\leq\bar{\delta}$, there is some $\tilde{x}_{\delta}\in\mathbb{B}_{\epsilon+\delta}\left(x\right)$
such that $f\left(\tilde{x}_{\delta}\right)=\bar{f}_{\epsilon+\delta}\left(x\right)$.
Let $\hat{x}_{\delta}=\frac{\delta}{\left|\tilde{x}_{\delta}-x\right|}\left(\tilde{x}_{\delta}-x\right)+x$.
We have $\bar{f}_{\epsilon}\left(\hat{x}_{\delta}\right)=\bar{f}_{\epsilon+\delta}\left(x\right)$,
which gives $\bar{f}_{\epsilon}\left(\hat{x}_{\delta}\right)-\bar{f}_{\epsilon}\left(x\right)>\kappa\delta$.
Since $\hat{x}_{\delta}$ was chosen such that $\delta=\left|\hat{x}_{\delta}-x\right|$,
we have $\bar{f}_{\epsilon}\left(\hat{x}_{\delta}\right)-\bar{f}_{\epsilon}\left(x\right)>\kappa\left|\hat{x}_{\delta}-x\right|$,
which implies $\kappa\leq\calm\bar{f}_{\epsilon}\left(x\right)$ as
needed.
\end{proof}
\begin{rem}
A similar statement can be made for $\epsilon=0$, except that we
change calmness to {}``calm from above'' as defined in \cite[Section 8F]{RW98}
in both parts.
\end{rem}
We have the following corollary. The subdifferential {}``$\partial$''
was defined in Definition \ref{def:subdifferential}.

\begin{cor}
\label{cor:calm-leq-lip}If $f:\mathbb{R}^{n}\rightarrow\mathbb{R}$,
$\epsilon>0$ and $g_{x}$ is Lipschitz at $\epsilon$, then \[
\calm\bar{f}_{\epsilon}\left(x\right)\leq\lip g_{x}\left(\epsilon\right)=\sup\left\{ \left|y\right|\mid y\in\partial g_{x}\left(\epsilon\right)\right\} .\]

\end{cor}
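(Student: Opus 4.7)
The plan is to split the corollary into two pieces: the inequality $\calm\bar{f}_{\epsilon}(x)\leq\lip g_{x}(\epsilon)$ and the equality $\lip g_{x}(\epsilon)=\sup\{|y|\mid y\in\partial g_{x}(\epsilon)\}$, and to treat each by appealing to results already available in the paper and in \cite{RW98}.

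For the inequality, I would first apply Proposition \ref{pro:calm-eq-derivative}, which gives $\calm\bar{f}_{\epsilon}(x)\leq\calm g_{x}(\epsilon)$ with no extra hypothesis beyond $\epsilon>0$. Then I would observe that in general $\calm g_{x}(\epsilon)\leq\lip g_{x}(\epsilon)$. This is immediate from the definitions in Definition \ref{def:calm-and-lip}: the pairs $(x,\bar{x})$ appearing in the $\limsup$ defining $\calm g_{x}(\epsilon)$ form a subset of the pairs $(x,x')$ appearing in the $\limsup$ defining $\lip g_{x}(\epsilon)$. (Alternatively, this is the special case of Proposition \ref{pro:lip-eq-cl-calm}(a) obtained by restricting the outer $\limsup$ to the single point $\bar{x}=\epsilon$.) Chaining the two bounds yields $\calm\bar{f}_{\epsilon}(x)\leq\lip g_{x}(\epsilon)$.

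For the equality, I would invoke \cite[Theorem 9.13]{RW98}, already cited in the preceding section of the paper: for a function that is finite-valued and Lipschitz continuous at a point, the Lipschitz modulus equals $\max\{|y|\mid y\in\partial g_{x}(\epsilon)\}$. Since $g_{x}:\mathbb{R}_{+}\to\mathbb{R}$ is finite-valued and is Lipschitz at $\epsilon$ by hypothesis, and $\epsilon>0$ lies in the interior of the domain, the theorem applies verbatim and gives exactly the stated formula.

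I do not anticipate a substantive obstacle: the corollary is essentially a packaging of Proposition \ref{pro:calm-eq-derivative} together with the subdifferential formula for the Lipschitz modulus. The only point deserving a moment's care is the trivial bound $\calm\leq\lip$, which is worth pointing to explicitly rather than leaving implicit, since elsewhere in Section \ref{sec:Calmness} these two quantities are shown to differ in general.
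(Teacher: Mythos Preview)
Your proposal is correct and matches the paper's proof essentially line for line: the paper writes ``It is clear that $\calm\bar{f}_{\epsilon}(x)\leq\calm g_{x}(\epsilon)\leq\lip g_{x}(\epsilon)$'' (the first inequality being Proposition~\ref{pro:calm-eq-derivative}, the second the trivial bound you mention), and then cites \cite[Theorem 9.13, Definition 9.1]{RW98} for the subdifferential formula. Your version is slightly more explicit about justifying each step, but the argument is the same.
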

\begin{proof}
It is clear that $\calm\bar{f}_{\epsilon}\left(x\right)\leq\calm g_{x}\left(\epsilon\right)\leq\lip g_{x}\left(\epsilon\right)$.
The formula $\lip g_{x}\left(\epsilon\right)$$=\sup\{\left|y\right|\mid y\in\partial g_{x}\left(\epsilon\right)\}$
follows from \cite[Theorem 9.13, Definition 9.1]{RW98}.
\end{proof}
In general, the robust regularization is calm.

\begin{prop}
\label{pro:calmness-a.e.}For a continuous function $f:X\rightarrow\mathbb{R}$,
there is an $\bar{\epsilon}>0$ such that $\bar{f}_{\epsilon}$ is
calm at $x$ for all $0<\epsilon\leq\bar{\epsilon}$ except on a subset
of $(0,\bar{\epsilon}]$ of measure zero. 
\end{prop}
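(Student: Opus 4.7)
The plan is to reduce the claim to differentiability of the monotone one-variable function $g_x$. Proposition \ref{pro:calm-eq-derivative} already provides the comparison $\calm \bar{f}_\epsilon(x) \leq \calm g_x(\epsilon)$ for every $\epsilon > 0$, and whenever $g_x$ is differentiable at $\epsilon$ its calmness modulus is simply $|g_x'(\epsilon)| < \infty$. Hence it suffices to show that $g_x$ is differentiable at almost every $\epsilon$ in some interval $(0, \bar{\epsilon}]$.

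First I would choose $\bar{\epsilon} > 0$ small enough that $g_x$ is finite-valued on $[0, \bar{\epsilon}]$; continuity of $f$ guarantees that $f$ is bounded on a sufficiently small closed ball around $x$ intersected with $X$, so such $\bar{\epsilon}$ exists. By Proposition \ref{pro:ppties_of_g}(a) the function $g_x$ is monotonically nondecreasing on $[0, \bar{\epsilon}]$, and the classical Lebesgue theorem on differentiability of monotone real functions then yields a Lebesgue-null subset $N \subset (0, \bar{\epsilon}]$ outside of which $g_x$ is differentiable. For every $\epsilon \in (0, \bar{\epsilon}] \setminus N$, the first assertion of Proposition \ref{pro:calm-eq-derivative} gives
\[
\calm \bar{f}_\epsilon(x) \leq \calm g_x(\epsilon) = |g_x'(\epsilon)| < \infty,
\]
so $\bar{f}_\epsilon$ is calm at $x$ for every such $\epsilon$.

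There is no substantive obstacle here: the proof is essentially a one-line appeal to Lebesgue's monotone differentiation theorem, wrapped in the calmness comparison already established, and the only bookkeeping point is ensuring $g_x$ is finite on $[0, \bar{\epsilon}]$, which follows from the continuity of $f$ in a neighborhood of $x$. Notice that only the first (inequality) part of Proposition \ref{pro:calm-eq-derivative} is needed, so we do not require $X = \mathbb{R}^n$.
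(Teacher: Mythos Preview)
Your proof is correct and follows essentially the same route as the paper: both arguments invoke Lebesgue's differentiation theorem for the monotone function $g_x$ and then appeal to Proposition~\ref{pro:calm-eq-derivative}. Your remark that only the inequality part of Proposition~\ref{pro:calm-eq-derivative} is needed (so $X=\mathbb{R}^n$ is not required) is a slight sharpening over the paper's own phrasing, which invokes the equality $g_x'(\epsilon)=\calm\bar{f}_\epsilon(x)$.
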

\begin{proof}
By Proposition \ref{pro:ppties_of_g}(b), since $f$ is continuous
at $x$, $g_{x}$ is continuous in $\left[0,\bar{\epsilon}\right]$
for some $\bar{\epsilon}>0$. Since $g_{x}$ is monotonically nondecreasing,
it is differentiable in all $\left[0,\bar{\epsilon}\right]$ except
for a set of measure zero. The derivative $g_{x}^{\prime}\left(\epsilon\right)$
equals $\mbox{calm }\bar{f}_{\epsilon}\left(x\right)$ by Proposition
\ref{pro:calm-eq-derivative}. 
\end{proof}
\begin{rem}
In general, the above result cannot be improved. For an example, let
$c:\left[0,1\right]\rightarrow\left[0,1\right]$ denote the Cantor
function, commonly used in real analysis texts as an example of a
function that is not absolutely continuous and not satisfying the
Fundamental Theorem of Calculus. Then $\calm\bar{c}_{\epsilon}\left(0\right)=\infty$
for all $\epsilon$ lying in the Cantor set.$\diamond$
\end{rem}

\section{\label{sec:general-r-r}Robust regularization in general}

In this section, in Corollary \ref{cor:robust-regularization-lipschitz},
we prove that if $\lip f\left(x\right)<\infty$ for $x$ close to
but not equal to $\bar{x}$, then $\lip\bar{f}_{\epsilon}\left(\bar{x}\right)<\infty$
for all small $\epsilon>0$, even when $\lip f\left(\bar{x}\right)=\infty$.
To present the details of the proof, we need a short foray into set-valued
analysis.

\begin{defn}
\cite[Example 4.13]{RW98} For two sets $C,D\subset\mathbb{R}^{m}$,
the \emph{Pompieu-Hausdorff distance} between $C$ and $D$, denoted
by $\mathbf{d}\left(C,D\right)$, is defined by \[
\mathbf{d}\left(C,D\right):=\inf\left\{ \eta\geq0\mid C\subset D+\eta\mathbb{B},D\subset C+\eta\mathbb{B}\right\} .\]

\begin{defn}
\cite[Definitions 9.26, 9.28]{RW98} A mapping $S:X\rightrightarrows\mathbb{R}^{m}$
is \emph{Lipschitz continuous} on its domain $X\subset\mathbb{R}^{n}$,
if it is nonempty-closed-valued on $X$ and there exists $\kappa\ge0$,
a Lipschitz constant, such that \[
\mathbf{d}\left(S\left(x^{\prime}\right),S\left(x\right)\right)\leq\kappa\left|x^{\prime}-x\right|\mbox{ for all }x,x^{\prime}\in X,\]
 or equivalently, $S\left(x^{\prime}\right)\subset S\left(x\right)+\kappa\left|x^{\prime}-x\right|\mathbb{B}$
for all $x,x^{\prime}\in X$. The \emph{Lipschitz modulus} is defined
as \[
\lip S\left(\bar{x}\right):=\limsup_{{x,x^{\prime}\xrightarrow[X]{}\bar{x}\atop x\neq x^{\prime}}}\frac{\mathbf{d}\left(S\left(x^{\prime}\right),S\left(x\right)\right)}{\left|x^{\prime}-x\right|},\]
and is the infimum of all $\kappa$ such that there exists a neighborhood
$U$ of $\bar{x}$ such that $S$ is Lipschitz continuous with constant
$\kappa$ in $U\cap X$.$\diamond$
\end{defn}
\end{defn}
For $F:X\rightarrow\mathbb{R}^{m}$, we may write the robust regularization
$F_{\epsilon}:X\rightrightarrows\mathbb{R}^{m}$ by $F_{\epsilon}=F\circ\Phi_{\epsilon}$,
where $\Phi_{\epsilon}:X\rightrightarrows X$ is defined by $\Phi_{\epsilon}\left(x\right)=\mathbb{B}_{\epsilon}\left(x\right)\cap X$.
For reasons that will be clear later in Section \ref{sec:The-condition},
we consider the extension $\tilde{\Phi}_{\epsilon}:\mathbb{R}^{n}\rightrightarrows X$
defined by $\tilde{\Phi}_{\epsilon}\left(x\right)=\mathbb{B}_{\epsilon}\left(x\right)\cap X$.
It is clear that $\tilde{\Phi}_{\epsilon}\mid_{X}=\Phi_{\epsilon}$
using our previous notation, and it follows straight from the definitions
that $\lip\Phi_{\epsilon}\left(x\right)\leq\lip\tilde{\Phi}_{\epsilon}\left(x\right)$
for $x\in X$. 

\begin{defn}
We say that $X\subset\mathbb{R}^{n}$ is \emph{peaceful at} $\bar{x}\in X$
if $\lip\Phi_{\epsilon}\left(\bar{x}\right)$ is finite for all small
$\epsilon>0$. If in addition $\limsup_{\epsilon\downarrow0}\lip\tilde{\Phi}_{\epsilon}\left(\bar{x}\right)\leq\kappa$
for all small $\epsilon>0$, we say that $X$ is peaceful with modulus
$\kappa$ at $\bar{x}$, or \emph{$\kappa$-peaceful} \emph{at} $\bar{x}$.
\end{defn}
When $\bar{x}$ lies in the interior of $X$ and $\epsilon$ is small
enough, then $\tilde{\Phi}_{\epsilon}$ is Lipschitz with constant
$1$. In section \ref{sec:The-condition}, we will find weaker conditions
on $X$ for the Lipschitz continuity of $\tilde{\Phi}_{\epsilon}$.
We will see that convex sets are $1$-peaceful, but for now, we remark
that if $X$ is convex, then $\Phi_{\epsilon}$ is globally Lipschitz
in $X$.

\begin{prop}
\label{pro:convex-lip-one}If $X$ is a convex set, then $\Phi_{\epsilon}\left(x\right)\subset\Phi\left(x^{\prime}\right)+\left|x-x^{\prime}\right|\mathbb{B}$
for all $x,x^{\prime}\in X$.
\end{prop}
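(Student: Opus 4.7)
The plan is to show directly that every $y \in \Phi_\epsilon(x)$ admits a companion $y' \in \Phi_\epsilon(x')$ with $|y-y'|\le |x-x'|$, constructed by pulling $y$ along the line segment toward $x'$. Convexity of $X$ is exactly what lets this segment stay inside $X$, and the amount we need to pull is controlled by the triangle inequality. There is no real obstacle; the only point requiring care is the case split on whether $y$ already lies in $\mathbb{B}_\epsilon(x')$.

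More concretely, fix $y \in \Phi_\epsilon(x)$, so $y\in X$ and $|y-x|\le\epsilon$. If $|y-x'|\le\epsilon$, simply take $y'=y$, which already belongs to $\Phi_\epsilon(x')$ and trivially satisfies $|y-y'|=0\le|x-x'|$. Otherwise $|y-x'|>\epsilon$, and the triangle inequality gives
\[
|y-x'| \le |y-x|+|x-x'| \le \epsilon + |x-x'|,
\]
so $|y-x'|-\epsilon \le |x-x'|$. Set $\lambda := 1 - \epsilon/|y-x'|\in(0,1)$ and define
\[
y' := \lambda x' + (1-\lambda)\, y.
\]
Since $x',y\in X$ and $X$ is convex, $y'\in X$.

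It remains to verify the two distance estimates. From $y'-x' = (1-\lambda)(y-x')$ and the choice of $\lambda$, I get $|y'-x'| = (1-\lambda)|y-x'| = \epsilon$, so $y'\in\mathbb{B}_\epsilon(x')\cap X = \Phi_\epsilon(x')$. From $y'-y = \lambda(x'-y)$, I compute
\[
|y'-y| = \lambda|y-x'| = |y-x'| - \epsilon \le |x-x'|,
\]
by the triangle inequality bound above. Hence $y \in \Phi_\epsilon(x') + |x-x'|\mathbb{B}$, which is exactly the claim.
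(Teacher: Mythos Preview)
Your proof is correct and follows essentially the same approach as the paper's: both take a point $y\in\Phi_\epsilon(x)$, handle the trivial case $y\in\mathbb{B}_\epsilon(x')$, and otherwise slide $y$ along the segment $[x',y]$ (which lies in $X$ by convexity) to the point at distance exactly $\epsilon$ from $x'$, then bound the displacement by the triangle inequality. The only cosmetic difference is that you parametrize this point explicitly as $\lambda x' + (1-\lambda)y$ with $\lambda = 1 - \epsilon/|y-x'|$, whereas the paper describes it geometrically as the intersection of the segment with the sphere $\partial\mathbb{B}_\epsilon(x')$.
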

\begin{proof}
The condition we are required to prove is equivalent to \[
\mathbb{B}_{\epsilon}\left(x\right)\cap X\subset\left(\mathbb{B}_{\epsilon}\left(x^{\prime}\right)\cap X\right)+\left|x-x^{\prime}\right|\mathbb{B}\mbox{ for }x,x^{\prime}\in X.\]
For any point $\tilde{x}\in\mathbb{B}_{\epsilon}\left(x\right)\cap X$,
the line segment $\left[x^{\prime},\tilde{x}\right]$ lies in $X$,
and is of length at most $\left|\tilde{x}-x\right|+\left|x-x^{\prime}\right|$.
The ball $\mathbb{B}_{\epsilon}\left(x^{\prime}\right)$ can contain
the line segment $\left[x^{\prime},\tilde{x}\right]$, in which case
$\tilde{x}\in\mathbb{B}_{\epsilon}\left(x^{\prime}\right)\cap X$,
or the boundary of $\mathbb{B}_{\epsilon}\left(x^{\prime}\right)$
may intersect $\left[x^{\prime},\tilde{x}\right]$ at a point, say
$\hat{x}$. Since $X$ is a convex set, we have $\hat{x}\in\mathbb{B}_{\epsilon}\left(x^{\prime}\right)\cap X$.
Furthermore \begin{eqnarray*}
\left|\tilde{x}-\hat{x}\right| & = & \left|\tilde{x}-x^{\prime}\right|-\epsilon\\
 & \leq & \left|\tilde{x}-x\right|+\left|x-x^{\prime}\right|-\epsilon\\
 & \leq & \left|x-x^{\prime}\right|,\end{eqnarray*}
so $\tilde{x}\in\left(\mathbb{B}_{\epsilon}\left(x^{\prime}\right)\cap X\right)+\left|x-x^{\prime}\right|\mathbb{B}$.
\end{proof}
We remark that if $X$ is nearly radial at $\bar{x}$ as introduced
in \cite{L02}, then $X$ is $1$-peaceful: see Section \ref{sec:The-condition}.
The set $X$ is \emph{nearly radial} \emph{at $\bar{x}$} if \[
\mbox{dist}\left(\bar{x},x+T_{X}\left(x\right)\right)\rightarrow0\mbox{ as }x\rightarrow\bar{x}\mbox{ in }X.\]
The set $X$ is \emph{nearly radial} if it is nearly radial at all
points in $X$. The notation $T_{X}\left(x\right)$ refers to the
\emph{(Bouligand) tangent cone} (or {}``contingent cone'') to $X$
at $x\in X$, formally defined as \[
T_{X}(\bar{x})=\{\lim t_{r}^{-1}(x_{r}-\bar{x}):t_{r}\downarrow0,\,\,\, x_{r}\rightarrow\bar{x},\,\,\, x_{r}\in X\}\]
 (see, for example, \cite[Definition 6.1]{RW98}). Many sets are nearly
radial, including for instance semi-algebraic sets, amenable sets
and smooth manifolds. 

We now present a result on the regularizing property of robust regularization.
In Proposition \ref{pro:inf-lip-regularization} below, condition
(i) allows us to evaluate the Lipschitz modulus of functions whose
domains are not necessarily convex. One situation where (i) is interesting
is when $X$ is a smooth manifold.

\begin{prop}
\label{pro:inf-lip-regularization}For $F:X\rightarrow\mathbb{R}^{m}$,
suppose that either (i) or (ii) holds.

(i) $X$ is peaceful and $\lip\tilde{F}\left(x\right)<\infty$ for
all $x$ close to but not equal to $\bar{x}$. Here, $\tilde{F}:\mathbb{R}^{n}\rightarrow\mathbb{R}^{m}$
is an extension of $F$ on $\mathbb{R}^{n}$ such that $\tilde{F}|_{X}=F$.

(ii) $X$ is convex and $\lipx F\left(x\right)<\infty$ for all $x\in X$
close to but not equal to $\bar{x}$. 

Then $\lip F_{\epsilon}\left(\bar{x}\right)$ is finite for all small
$\epsilon>0$. 
\end{prop}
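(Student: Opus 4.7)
The plan is to write $F_\epsilon=F\circ\Phi_\epsilon$ and bound the Pompieu--Hausdorff distance $\mathbf{d}(F_\epsilon(x),F_\epsilon(x'))$ by combining a set-valued Lipschitz bound on $\Phi_\epsilon$ near $\bar x$ with a \emph{uniform} Lipschitz bound on $F$ on a compact set bounded away from $\bar x$. In both cases the Lipschitz modulus $\kappa_1$ of $\Phi_\epsilon$ at $\bar x$ is finite: in (i) by the peacefulness of $X$, and in (ii) by Proposition~\ref{pro:convex-lip-one}, which gives $\kappa_1=1$. The obstruction is that the local Lipschitz modulus of $F$ (respectively $\tilde F$) need not be uniformly bounded on any neighborhood of $\bar x$, so a naive composition of Lipschitz estimates does not work.

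The key device is to discard, once and for all, image values attained by witnesses near $\bar x$. Fix $\eta\in(0,\epsilon)$; whenever $|x-\bar x|,|x'-\bar x|<\epsilon-\eta$, the inclusions $\mathbb{B}_\eta(\bar x)\subset\mathbb{B}_\epsilon(x)\cap\mathbb{B}_\epsilon(x')$ yield
\[ F(\mathbb{B}_\eta(\bar x)\cap X)\;\subset\;F_\epsilon(x)\cap F_\epsilon(x'). \]
Consequently, any $y\in F_\epsilon(x)\setminus F_\epsilon(x')$ is witnessed only by $z\in\Phi_\epsilon(x)$ lying outside $\mathbb{B}_\eta(\bar x)$. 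Set $K:=\{z\in X:\eta/2\le|z-\bar x|\le 2\epsilon\}$; this is compact and $F$ (or $\tilde F$) is locally Lipschitz at every point of $K$, so a finite subcover together with the Lebesgue number lemma produce constants $\delta_0>0$ and $L<\infty$ with $|F(z)-F(z')|\le L|z-z'|$ whenever $z,z'\in K$ satisfy $|z-z'|<\delta_0$.

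To finish, the set-valued Lipschitz property of $\Phi_\epsilon$ supplies $z'\in\Phi_\epsilon(x')$ with $|z-z'|\le\kappa_1|x-x'|$. For $|x-x'|$ small (namely $|x-x'|<\min\{\delta_0/\kappa_1,\eta/(2\kappa_1)\}$), both $z,z'\in K$ and $|z-z'|<\delta_0$, so $|F(z)-F(z')|\le L\kappa_1|x-x'|$, and $y':=F(z')\in F_\epsilon(x')$ is the required matching point. The symmetric estimate comes from swapping $x$ and $x'$, giving $\mathbf{d}(F_\epsilon(x),F_\epsilon(x'))\le L\kappa_1|x-x'|$ and hence $\lip F_\epsilon(\bar x)\le L\kappa_1<\infty$.

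The main obstacle is the step of excluding dangerous witnesses: without the observation that $F(\mathbb{B}_\eta(\bar x)\cap X)$ already lies in every nearby $F_\epsilon(x')$, one would be forced to control $|F(z)-F(z')|$ for $z,z'$ arbitrarily close to $\bar x$, where the Lipschitz modulus of $F$ may blow up. Once that observation is in hand, the remainder is a routine compactness-plus-composition argument.
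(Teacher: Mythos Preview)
Your proof is correct and follows essentially the same approach as the paper: both arguments split according to whether the witness $z$ for $y\in F_\epsilon(x)$ lies in an inner ball around $\bar x$ (in which case $z\in\Phi_\epsilon(x')$ automatically, so $y\in F_\epsilon(x')$) or in an annulus bounded away from $\bar x$ (where a uniform Lipschitz bound on $F$ is available by compactness and the Lebesgue number lemma, and the peacefulness/convexity of $X$ supplies a nearby $z'\in\Phi_\epsilon(x')$). Your phrasing of the case split in terms of whether $y\in F_\epsilon(x')$ is the contrapositive of the paper's phrasing in terms of where the witness sits, and your extraction of the uniform constant $L$ via a finite subcover replaces the paper's use of upper semicontinuity of $x\mapsto\lip F(x)$, but these are cosmetic differences.
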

\begin{proof}
The proof for both conditions are similar, so they will be treated
together. One notes that $\lipx F\left(x\right)\leq\lip\tilde{F}\left(x\right)$
always by the definition of these Lipschitz moduli, so we assume $\lipx F\left(x\right)<\infty$
for all $x\in X$ close to but not equal to $\bar{x}$ until we have
to distinguish these cases.

First, we prove that $\lipx F:X\rightarrow\mathbb{R}_{+}$ is upper
semicontinuous. This result is just a slight modification of the first
part of \cite[Theorem 9.2]{RW98}, but we include the proof for completeness.
Suppose that $x_{i}\rightarrow x$. By the definition of $\lipx F$,
we can find $x_{i,1}$, $x_{i,2}\in X$ such that \begin{eqnarray*}
\frac{\left|F\left(x_{i,1}\right)-F\left(x_{i,2}\right)\right|}{\left|x_{i,1}-x_{i,2}\right|} & > & \lipx F\left(x_{i}\right)-\left|x_{i}-x\right|,\\
\mbox{ and }\left|x_{i,j}-x_{i}\right| & < & \left|x_{i}-x\right|\mbox{ for }j=1,2.\end{eqnarray*}
Taking limits as $i\rightarrow\infty$, we see that $x_{i,1},x_{i,2}\rightarrow x$,
and it follows that \begin{eqnarray*}
\lipx F\left(x\right) & \geq & \limsup_{i\rightarrow\infty}\frac{\left|F\left(x_{i,1}\right)-F\left(x_{i,2}\right)\right|}{\left|x_{i,1}-x_{i,2}\right|}\\
 & = & \limsup_{i\rightarrow\infty}\lipx F\left(x_{i}\right).\end{eqnarray*}
Thus $\lipx F:X\rightarrow\mathbb{R}_{+}$ is upper semicontinuous.

So for $\epsilon_{1}$ small enough, choose $\epsilon_{2}<\epsilon_{1}$
such that $\lipx F$ is bounded above in $C_{1}=\left(\mathbb{B}_{\epsilon_{1}+\epsilon_{2}}\left(\bar{x}\right)\backslash\mathbb{B}_{\epsilon_{1}-\epsilon_{2}}\left(\bar{x}\right)\right)\cap X$,
say by the constant $\kappa_{1}$. Then for any $\kappa_{2}>\kappa_{1}$
and any $x\in C_{1}$, there is an $\epsilon_{x}$ such that $F$
is Lipschitz continuous on $\mathbb{B}_{\epsilon_{x}}\left(x\right)\cap X$
with constant $\kappa_{2}$ with respect to $X$. Thus $\cup_{x\in C_{1}}\left\{ \mathbb{B}_{\epsilon_{x}}\left(x\right)\right\} $
is an open cover of $C_{1}$.

By the Lebesgue Number Lemma, there is a constant $\delta$ such that
if $x_{1},x_{2}$ lie in $C_{1}$ and $\left|x_{1}-x_{2}\right|\leq\delta$,
then the line segment $\left[x_{1},x_{2}\right]$ lies in one of the
open balls $\mathbb{B}_{\epsilon_{x}}\left(x\right)$ for some $x\in C_{1}$.
We may assume that $\delta<\epsilon_{2}$.

Also, since $X$ is peaceful at $\bar{x}$, choose $\epsilon_{1}$
small enough so that $\lipi\Phi_{\epsilon_{1}}\left(\bar{x}\right)$
is finite, say $\lipi\Phi_{\epsilon_{1}}\left(\bar{x}\right)<K$.
If $X$ is convex, then this is possible due to Proposition \ref{pro:convex-lip-one}.
We can assume that $K>2$. Therefore, there is an open set $U$ about
$\bar{x}$ such that $\Phi_{\epsilon_{1}}$ is Lipschitz in $U\cap X$
with constant $K$, that is $\Phi_{\epsilon_{1}}\left(x\right)\subset\Phi_{\epsilon_{1}}\left(x^{\prime}\right)+K\left|x-x^{\prime}\right|\mathbb{B}$
for all $x,x^{\prime}\in U\cap X$.

So, for $x,x^{\prime}\in U\cap\mathbb{B}_{\frac{\delta}{2K}}\left(\bar{x}\right)\cap X$,
we want to show that \[
F_{\epsilon_{1}}\left(x\right)\subset F_{\epsilon_{1}}\left(x^{\prime}\right)+K\kappa_{2}\left|x-x^{\prime}\right|\mathbb{B}.\]
Suppose that $y\in F_{\epsilon_{1}}\left(x\right)$. So $y=F\left(\tilde{x}\right)$
for some $\tilde{x}\in\mathbb{B}_{\epsilon_{1}}\left(x\right)\cap X$.
If $\tilde{x}\in\mathbb{B}_{\epsilon_{1}-\frac{\delta}{2K}}\left(\bar{x}\right)$,
then $\tilde{x}\in\mathbb{B}_{\epsilon_{1}}\left(x^{\prime}\right)\cap X$
because $\left|x^{\prime}-\bar{x}\right|\leq\frac{\delta}{2K}$. So
$y\in F_{\epsilon_{1}}\left(x^{\prime}\right)$. Otherwise $\tilde{x}\in\left(\mathbb{B}_{\epsilon_{1}+\frac{\delta}{2K}}\left(\bar{x}\right)\backslash\mathbb{B}_{\epsilon_{1}-\frac{\delta}{2K}}\left(\bar{x}\right)\right)\cap X.$

We have $\Phi_{\epsilon_{1}}\left(x\right)\subset\Phi_{\epsilon_{1}}\left(x^{\prime}\right)+K\left|x-x^{\prime}\right|\mathbb{B}$.
So there is some $\hat{x}\in\Phi_{\epsilon_{1}}\left(x^{\prime}\right)$
such that \[
\left|\hat{x}-\tilde{x}\right|\leq K\left|x-x^{\prime}\right|\leq K\frac{\delta}{2K}=\frac{\delta}{2}.\]
Furthermore, \[
\left|\hat{x}-\bar{x}\right|\leq\left|\tilde{x}-x\right|+\left|x-\bar{x}\right|+\left|\hat{x}-\tilde{x}\right|\leq\epsilon_{1}+\frac{\delta}{2K}+\frac{\delta}{2}\leq\epsilon_{1}+\frac{3\delta}{4}<\epsilon_{1}+\epsilon_{2},\]
and \[
\left|\hat{x}-\bar{x}\right|\geq\left|\tilde{x}-x\right|-\left|x-\bar{x}\right|-\left|\hat{x}-\tilde{x}\right|\geq\epsilon_{1}-\frac{\delta}{2K}-\frac{\delta}{2}\geq\epsilon_{1}-\frac{3\delta}{4}>\epsilon_{1}-\epsilon_{2}.\]
Hence $\hat{x}\in\left(\mathbb{B}_{\epsilon_{1}+\epsilon_{2}}\left(\bar{x}\right)\backslash\mathbb{B}_{\epsilon_{1}-\epsilon_{2}}\left(\bar{x}\right)\right)\cap X$.
We now proceed to prove the inequality $\left|F\left(\tilde{x}\right)-F\left(\hat{x}\right)\right|<\kappa_{2}\left|\hat{x}-\tilde{x}\right|$
for the two cases.

Condition (i): Since $\left|\hat{x}-\tilde{x}\right|<\delta$, the
line segment $\left[\hat{x},\tilde{x}\right]$ lies in $\mathbb{B}_{\epsilon_{x}}\left(x\right)$
for some $x\in X$. Since the line segment $\left[\hat{x},\tilde{x}\right]$
is convex and $\lip\tilde{F}$ is bounded from above by $\kappa_{2}$
there, we have \begin{eqnarray*}
\left|F\left(\tilde{x}\right)-F\left(\hat{x}\right)\right| & = & \left|\tilde{F}\left(\tilde{x}\right)-\tilde{F}\left(\hat{x}\right)\right|\\
 & < & \kappa_{2}\left|\tilde{x}-\hat{x}\right|\end{eqnarray*}
 by \cite[Theorem 9.2]{RW98}. 

Condition (ii): The proof is similar, except that $\left[\hat{x},\tilde{x}\right]\subset X$,
and $\lipx F$ is bounded above by $\kappa_{2}$. 

On establishing $\left|F\left(\tilde{x}\right)-F\left(\hat{x}\right)\right|<\kappa_{2}\left|\hat{x}-\tilde{x}\right|$,
we note that \begin{eqnarray*}
F\left(\tilde{x}\right) & \in & F\left(\hat{x}\right)+\kappa_{2}\left|\hat{x}-\tilde{x}\right|\mathbb{B}\\
 & \subset & F_{\epsilon_{1}}\left(x^{\prime}\right)+\kappa_{2}\left|\hat{x}-\tilde{x}\right|\mathbb{B}\\
 & \subset & F_{\epsilon_{1}}\left(x^{\prime}\right)+K\kappa_{2}\left|x-x^{\prime}\right|\mathbb{B},\end{eqnarray*}
 and we are done. 
\end{proof}
 We are now ready to relate $\lipx\bar{f}_{\epsilon}\left(\bar{x}\right)$
to $\lipx f\left(\bar{x}\right)$. We remind the reader that in the
proof of Corollary \ref{cor:robust-regularization-lipschitz} below,
$f_{\epsilon}:X\rightrightarrows\mathbb{R}$ is a set-valued map as
introduced in Definition \ref{def:robust-regularization}, which is
similar to $\bar{f}_{\epsilon}$ but maps to intervals in $\mathbb{R}$.

\begin{cor}
\label{cor:robust-regularization-lipschitz}For $f:X\rightarrow\mathbb{R}$,
if either condition (i) or condition (ii) in Proposition \ref{pro:inf-lip-regularization}
for $F:X\rightarrow\mathbb{R}$ taken to be $f$ holds, then $\lipx\bar{f}_{\epsilon}\left(\bar{x}\right)<\infty$
for all small $\epsilon>0$. 
\end{cor}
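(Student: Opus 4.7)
The plan is to deduce the single-valued claim from its set-valued counterpart. Applying Proposition \ref{pro:inf-lip-regularization} with $F:=f$ yields $\lip f_\epsilon(\bar{x})<\infty$ for every sufficiently small $\epsilon>0$; the remaining task is to transfer this Pompieu--Hausdorff Lipschitz bound on the set-valued map $f_\epsilon$ into an ordinary Lipschitz bound on its supremum $\bar{f}_\epsilon$.

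Fix such an $\epsilon$ and any $\kappa>\lip f_\epsilon(\bar{x})$. By the definition of the set-valued Lipschitz modulus, there is a neighborhood $V$ of $\bar{x}$ on which $\mathbf{d}(f_\epsilon(x),f_\epsilon(x'))\leq\kappa|x-x'|$ for all $x,x'\in V\cap X$, equivalently $f_\epsilon(x)\subset f_\epsilon(x')+\kappa|x-x'|\mathbb{B}$. The set $f_\epsilon(x)$ is nonempty and closed (as built into the definition of Lipschitz continuity of $f_\epsilon$ invoked in Proposition \ref{pro:inf-lip-regularization}) and bounded above thanks to the local Lipschitz hypothesis on $f$ near $\bar{x}$, so $\bar{f}_\epsilon(x)=\max f_\epsilon(x)$ is actually attained.

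Setting $y:=\bar{f}_\epsilon(x)\in f_\epsilon(x)$, the inclusion above furnishes some $y'\in f_\epsilon(x')$ with $|y-y'|\leq\kappa|x-x'|$, and hence $\bar{f}_\epsilon(x')\geq y'\geq\bar{f}_\epsilon(x)-\kappa|x-x'|$. Swapping the roles of $x$ and $x'$ gives $|\bar{f}_\epsilon(x)-\bar{f}_\epsilon(x')|\leq\kappa|x-x'|$, so $\lipx\bar{f}_\epsilon(\bar{x})\leq\kappa<\infty$, as required.

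The substantive content all lives in Proposition \ref{pro:inf-lip-regularization}; the only real work here is the short observation that passing to $\sup$ is $1$-Lipschitz in the Pompieu--Hausdorff metric on nonempty bounded closed subsets of $\mathbb{R}$. Confirming that $f_\epsilon(x)$ is such a set under the ambient hypotheses is routine bookkeeping rather than a genuine obstacle; if attainment of the supremum is ever in doubt, one runs the same inequality chain along an approximating sequence $y_n\in f_\epsilon(x)$ with $y_n\uparrow\bar{f}_\epsilon(x)$ and passes to the limit.
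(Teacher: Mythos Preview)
Your proof is correct and follows essentially the same route as the paper: invoke Proposition~\ref{pro:inf-lip-regularization} to get $\lip f_\epsilon(\bar{x})<\infty$, then observe that the supremum map is $1$-Lipschitz with respect to the Pompieu--Hausdorff distance, so $\lipx\bar{f}_\epsilon(\bar{x})\leq\lip f_\epsilon(\bar{x})$. The paper phrases this last step slightly more abstractly (for a general set-valued $S$ and $\bar{S}=\sup S$) and sidesteps attainment entirely by passing directly from the inclusion $S(\tilde{x})\subset S(\hat{x})+\kappa|\tilde{x}-\hat{x}|\mathbb{B}$ to the sup inequality, but the content is the same as your argument together with your approximating-sequence remark.
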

\begin{proof}
By Proposition \ref{pro:inf-lip-regularization}, we have $\lip f_{\epsilon}\left(\bar{x}\right)<\infty$
with the given conditions. It remains to prove that $\lipx\bar{f}_{\epsilon}\left(\bar{x}\right)\leq\lip f_{\epsilon}\left(\bar{x}\right)$.
We can do this by proving that $\lipx\bar{S}\left(\bar{x}\right)\leq\lip S\left(\bar{x}\right)$,
where $S:X\rightrightarrows\mathbb{R}$ is a set-valued map, and $\bar{S}:X\rightarrow\mathbb{R}$
is defined by $\bar{S}\left(x\right)=\sup\left\{ y\mid y\in S\left(x\right)\right\} $.
Note that if $S=f_{\epsilon}$, then $\bar{S}=\overline{\left(f_{\epsilon}\right)}=\bar{f}_{\epsilon}$. 

For any $\kappa>\lip S\left(x\right)$, we have $\mathbf{d}\left(S\left(\tilde{x}\right),S\left(\hat{x}\right)\right)\leq\kappa\left|\tilde{x}-\hat{x}\right|$
for $\tilde{x},\hat{x}\in X$ close enough to $x$ by \cite[Definition 9.26]{RW98}.
The definition of the Pompeiu-Hausdorff distance tells us that $S\left(\tilde{x}\right)\subset S\left(\hat{x}\right)+\kappa\left|\tilde{x}-\hat{x}\right|$,
which implies $\bar{S}\left(\tilde{x}\right)\leq\bar{S}\left(\hat{x}\right)+\kappa\left|\tilde{x}-\hat{x}\right|$.
By reversing the roles of $\tilde{x}$ and $\hat{x}$, we obtain $\left|\bar{S}\left(\tilde{x}\right)-\bar{S}\left(\hat{x}\right)\right|\leq\kappa\left|\tilde{x}-\hat{x}\right|$.
So $\kappa>\lipx\bar{S}\left(x\right)$, and since $\kappa$ is arbitrary,
we have $\lipx\bar{S}\left(x\right)\leq\lip S\left(x\right)$ as needed.
\end{proof}

\section{\label{sec:Semialgebraic}Semi-algebraic robust regularization}

In this section, in Theorem \ref{thm:lip<infty}, we prove that if
$f:\mathbb{R}^{n}\rightarrow\mathbb{R}$ is continuous and semi-algebraic,
then at any given point, the robust regularization is locally Lipschitz
there for all sufficiently small $\epsilon>0$. This theorem is more
appealing than Corollary \ref{cor:robust-regularization-lipschitz}
because the required condition is weaker. The condition $\lip f\left(x\right)<\infty$
for all $x$ close to but not equal to $\bar{x}$ in Corollary \ref{cor:robust-regularization-lipschitz}
is a strong condition because if a function is not Lipschitz at a
point $\bar{x}$, it is likely that it is not Lipschitz at some points
close to $\bar{x}$ as well. For example in $f:\mathbb{R}^{2}\rightarrow\mathbb{R}$
defined by $f\left(x_{1},x_{2}\right)=\left|\sqrt{x_{1}}\right|$,
$f$ is not Lipschitz at all points where $x_{1}=0$.

We proceed to prove the main theorem of this section in the steps
outlined below.

\begin{prop}
\label{pro:G-semialgebraic}For $f:X\rightarrow\mathbb{R}$, where
$X\subset\mathbb{R}^{n}$ is convex, define $G:X\times\mathbb{R}_{+}\rightarrow\mathbb{R}_{+}\cup\left\{ \infty\right\} $
by \[
G\left(x,\epsilon\right):=\limsup_{\tilde{\epsilon}\rightarrow\epsilon}\lipx\bar{f}_{\tilde{\epsilon}}\left(x\right).\]
If $f$ is semi-algebraic, then the maps $\left(x,\epsilon\right)\mapsto\calmx\bar{f}_{\epsilon}\left(x\right)$,
$\left(x,\epsilon\right)\mapsto\lipx\bar{f}_{\epsilon}\left(x\right)$
and $G$ are semi-algebraic.
\end{prop}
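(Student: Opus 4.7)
The plan is to invoke the Tarski-Seidenberg theorem: any set or function defined from semi-algebraic data by a first-order formula over $\mathbb{R}$ (using $\forall,\exists$, Boolean connectives and polynomial (in)equalities) is again semi-algebraic. All three maps in question turn out to be built from $f$ and $X$ by such formulas, so the proof is essentially a bookkeeping exercise in quantifier elimination.

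First I would establish that the function $(x,\epsilon)\mapsto \bar f_\epsilon(x)$ is itself semi-algebraic. Its graph
\[
\{(x,\epsilon,y) : y = \sup\{f(x') : x'\in X,\ |x'-x|^{2}\le\epsilon^{2}\}\}
\]
is cut out by the first-order statement ``$y$ is an upper bound on $\{f(x') : x'\in X,\, |x'-x|^2\le\epsilon^2\}$, and no strictly smaller real number is.'' Since $f$ and $X$ are semi-algebraic, so is this graph. As an immediate consequence, the difference quotient
\[
q(x,\tilde x,\epsilon) \;:=\; \frac{|\bar f_\epsilon(\tilde x)-\bar f_\epsilon(x)|}{|\tilde x - x|}, \qquad \tilde x\ne x,
\]
is a semi-algebraic function of $(x,\tilde x,\epsilon)$.

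Next I would write the calmness modulus in its ``$\inf\kappa$'' form,
\[
\calmx \bar f_\epsilon(x) = \inf\bigl\{\kappa\ge 0 : \exists\,\delta>0,\ \forall\,\tilde x\in X,\ 0<|\tilde x-x|<\delta \Rightarrow q(x,\tilde x,\epsilon)\le\kappa\bigr\},
\]
with the infimum read as $+\infty$ when no such $\kappa$ exists. Tarski-Seidenberg then gives semi-algebraicity of the graph on the (semi-algebraic) locus where the value is finite, together with the (semi-algebraic) locus where the value is $+\infty$. An entirely parallel formula, replacing the single varying point $\tilde x$ by a pair $\tilde x, x'\in\mathbb{B}_\delta(x)\cap X$ with $\tilde x\ne x'$, handles $\lipx \bar f_\epsilon(x)$. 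Finally,
\[
G(x,\epsilon) \;=\; \inf_{\delta>0}\ \sup\{\lipx\bar f_{\tilde\epsilon}(x) : |\tilde\epsilon-\epsilon|<\delta\}
\]
is one more first-order construction applied to the now-known semi-algebraic map $(x,\tilde\epsilon)\mapsto\lipx\bar f_{\tilde\epsilon}(x)$, so $G$ is semi-algebraic as well.

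The main obstacle is not conceptual but administrative: one must be scrupulous about the extended-real-valued character of $\calmx$, $\lipx$ and $G$. I would handle this uniformly by regarding each modulus as the pair consisting of the semi-algebraic graph of its restriction to the semi-algebraic locus where it is finite, together with the semi-algebraic locus where it takes the value $+\infty$; under this convention every step above is a routine application of quantifier elimination. Note that the convexity of $X$ plays no role in this particular statement; it is presumably used where the proposition is subsequently applied.
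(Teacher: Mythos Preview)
Your proposal is correct and is essentially the same approach as the paper's own proof, which consists of the single sentence ``The semi-algebraic nature is a consequence of the Tarski-Seidenberg quantifier elimination.'' You have simply spelled out in detail the first-order formulas that the paper leaves implicit.
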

\begin{proof}
The semi-algebraic nature is a consequence of the Tarski-Seidenberg
quantifier elimination.
\end{proof}
The semi-algebraicity of $\left(x,\epsilon\right)\mapsto\calm\bar{f}_{\epsilon}\left(x\right)$
gives us an indication of how the map $\epsilon\mapsto\calm\bar{f}_{\epsilon}\left(x\right)$
behaves asymptotically.

\begin{prop}
\label{pro:f-epsilon-calm}Suppose that $f:X\rightarrow\mathbb{R}$
is continuous and semi-algebraic, where $X\subset\mathbb{R}^{n}$.
Fix $x\in X$. Then $\calmx\bar{f}_{\epsilon}\left(x\right)=o\left(\frac{1}{\epsilon}\right)$
as $\epsilon\searrow0$. Hence $\bar{f}_{\epsilon}$ is calm at $x$
for all small $\epsilon>0$.
\end{prop}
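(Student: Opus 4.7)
The plan is to reduce the estimate on $\calm\bar{f}_{\epsilon}(x)$ to a one-variable Puiseux analysis of the scalar map $g_{x}(\epsilon):=\bar{f}_{\epsilon}(x)$. First, I would observe that $g_{x}$ is itself semi-algebraic: since $f$ is semi-algebraic and
\[g_{x}(\epsilon)=\sup\{f(x')\mid x'\in X,\ |x'-x|\leq\epsilon\},\]
Tarski-Seidenberg quantifier elimination yields semi-algebraicity of $g_{x}$ as a function of $\epsilon$. By Proposition \ref{pro:ppties_of_g}, $g_{x}$ is continuous on some $[0,\bar{\epsilon})$ and monotone nondecreasing, with $g_{x}(0)=f(x)$.

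Next, I would invoke the Puiseux expansion available to every continuous semi-algebraic function of one real variable: there is a $\delta\in(0,\bar{\epsilon})$ so that on $(0,\delta)$ either $g_{x}\equiv g_{x}(0)$, or there exist a rational $\alpha>0$ and a constant $c>0$ with
\[g_{x}(\epsilon)-g_{x}(0)=c\epsilon^{\alpha}+o(\epsilon^{\alpha})\quad\text{as }\epsilon\searrow 0.\]
In the nontrivial case, $g_{x}$ is furthermore real-analytic on $(0,\delta)$, so $g_{x}'(\epsilon)=c\alpha\epsilon^{\alpha-1}(1+o(1))$; in the trivial case, $g_{x}'\equiv 0$.

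Finally, by the first assertion of Proposition \ref{pro:calm-eq-derivative}, $\calm\bar{f}_{\epsilon}(x)\leq\calm g_{x}(\epsilon)$, and since $g_{x}$ is differentiable and nondecreasing on $(0,\delta)$, $\calm g_{x}(\epsilon)=g_{x}'(\epsilon)$. Multiplying by $\epsilon$ yields
\[\epsilon\,\calm\bar{f}_{\epsilon}(x)\leq\epsilon\, g_{x}'(\epsilon)=c\alpha\,\epsilon^{\alpha}(1+o(1))\longrightarrow 0\quad\text{as }\epsilon\searrow 0,\]
(trivially in the constant case), giving $\calm\bar{f}_{\epsilon}(x)=o(1/\epsilon)$. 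In particular $\calm\bar{f}_{\epsilon}(x)<\infty$ for all sufficiently small $\epsilon>0$, so $\bar{f}_{\epsilon}$ is calm at $x$.

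The substantive step is the Puiseux asymptotics, which I would quote from a standard reference on real semi-algebraic geometry. A mild subtlety is that $X$ is not assumed convex, so Proposition \ref{pro:G-semialgebraic} does not directly apply to give semi-algebraicity of $(x,\epsilon)\mapsto\calm\bar{f}_{\epsilon}(x)$; fortunately only semi-algebraicity of $g_{x}$ is needed, and Tarski-Seidenberg supplies it unconditionally.
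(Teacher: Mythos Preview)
Your argument is correct and follows the same overall reduction as the paper: establish that $g_{x}$ is semi-algebraic, note it is differentiable on some $(0,\bar\epsilon)$, and then bound $\calm\bar f_{\epsilon}(x)$ via Proposition~\ref{pro:calm-eq-derivative}. The execution of the one-variable step differs, however. You invoke a Puiseux expansion $g_{x}(\epsilon)-g_{x}(0)=c\epsilon^{\alpha}+o(\epsilon^{\alpha})$ and differentiate it to get $\epsilon g_{x}'(\epsilon)\sim c\alpha\epsilon^{\alpha}\to 0$. The paper instead uses only the semi-algebraic dichotomy---for each $K>0$ either $g_{x}'(\epsilon)\le K/\epsilon$ eventually or $g_{x}'(\epsilon)\ge K/\epsilon$ eventually---and rules out the latter by the divergence of $\int_{0}^{\epsilon}K/s\,ds$, which would contradict continuity of $g_{x}$ at $0$. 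Your Puiseux route gives sharper quantitative information (the actual order $\epsilon^{\alpha-1}$), at the cost of quoting a heavier structural result; the paper's integral argument is lighter, needing only the one-dimensional finiteness property of semi-algebraic sets. Your closing remark about Proposition~\ref{pro:G-semialgebraic} is apt: the paper's proof likewise bypasses it and works directly with the semi-algebraicity of $g_{x}$.
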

\begin{proof}
The map $g_{x}$ is semi-algebraic because it can be written as a
composition of semi-algebraic maps $\epsilon\mapsto\left(x,\epsilon\right)\mapsto\bar{f}_{\epsilon}\left(x\right)$.
Thus $g_{x}$ is differentiable on some open interval of the form
$\left(0,\bar{\epsilon}\right)$ for $\bar{\epsilon}>0$. Recall that
$\calm g_{x}\left(\epsilon\right)=g_{x}^{\prime}\left(\epsilon\right)$
by Proposition \ref{pro:calm-eq-derivative}.

We show that for any $K>0$, we can reduce $\bar{\epsilon}$ if necessary
so that the map $\epsilon\mapsto\calm\bar{f}_{\epsilon}\left(x\right)$
is bounded from above by $\epsilon\mapsto\frac{K}{\epsilon}$ on $\epsilon\in\left[0,\bar{\epsilon}\right]$.
For any $K>0$, there exists an $\bar{\epsilon}>0$ such that either
$g_{x}^{\prime}\left(\epsilon\right)\leq\frac{K}{\epsilon}$ for all
$0<\epsilon<\bar{\epsilon}$, or $g_{x}^{\prime}\left(\epsilon\right)\geq\frac{K}{\epsilon}$
for all $0<\epsilon<\bar{\epsilon}$. The latter cannot happen, otherwise
for any $0<\epsilon<\bar{\epsilon}$,\begin{eqnarray*}
\bar{f}_{\epsilon}\left(x\right)-f\left(x\right) & = & \int_{0}^{\epsilon}g_{x}^{\prime}\left(s\right)ds\\
 & \geq & \int_{0}^{\epsilon}\frac{K}{s}ds=\infty.\end{eqnarray*}
This contradicts the continuity of $g_{x}$. If $\epsilon$ is small
enough, the derivatives of $g_{x}$ exist for all small $\epsilon>0$
and $g_{x}^{\prime}\left(\epsilon\right)=\calm\bar{f}_{\epsilon}\left(x\right)$
by Proposition \ref{pro:calm-eq-derivative}. This gives us the required
result.
\end{proof}
Consider $f:\left[0,1\right]\rightarrow\mathbb{R}$ defined by $f\left(x\right)=x^{1/k}$.
Then $g_{0}\left(\epsilon\right)=\epsilon^{1/k}$, so $\calm\bar{f}_{\epsilon}\left(0\right)=g_{0}^{\prime}\left(\epsilon\right)=\frac{1}{k}\epsilon^{\left(1/k\right)-1}$.
As $k\rightarrow\infty$, we see that the bound above is tight.

We are now ready to state the main theorem of this paper. In the particular
case of $X=\mathbb{R}^{n}$, we have the following theorem.

\begin{thm}
\label{thm:lip<infty}Consider any continuous semi-algebraic function
$f:\mathbb{R}^{n}\rightarrow\mathbb{R}$. At any fixed point $\bar{x}\in\mathbb{R}^{n}$,
the robust regularization $\bar{f}_{\epsilon}$ is Lipschitz at $\bar{x}$,
and its calmness and Lipschitz moduli, $\calm\bar{f}_{\epsilon}\left(\bar{x}\right)$
and $\lip\bar{f}_{\epsilon}\left(\bar{x}\right)$, agree for sufficiently
small $\epsilon$ and behave like $o\left(\frac{1}{\epsilon}\right)$
as $\epsilon\downarrow0$. 
\end{thm}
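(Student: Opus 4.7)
The plan is to reduce the theorem to semi-algebraic analysis of one-variable functions, exploiting the convexity of $\mathbb{R}^{n}$. Set $\phi(\epsilon):=\lip\bar{f}_{\epsilon}(\bar{x})$ and $c(\epsilon):=\calm\bar{f}_{\epsilon}(\bar{x})$. By Proposition \ref{pro:G-semialgebraic} both $\phi$ and $c$ are semi-algebraic functions of $\epsilon$ alone, and Proposition \ref{pro:lip-eq-cl-calm}(b), applicable because $\mathbb{R}^{n}$ is convex, gives the identity $\phi(\epsilon)=\limsup_{x\to\bar{x}}\calm\bar{f}_{\epsilon}(x)$. Proposition \ref{pro:f-epsilon-calm} at $\bar{x}$ supplies the pointwise bound $c(\epsilon)=o(1/\epsilon)$, so since $c\le\phi$ we already know the asymptotics along $x=\bar{x}$; what is missing is control of the limsup.

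The key step, and what I expect to be the chief obstacle, is upgrading the pointwise-in-$x$ bound $\calm\bar{f}_{\epsilon}(x)=o(1/\epsilon)$ to the uniform limsup bound $\phi(\epsilon)=o(1/\epsilon)$. I would argue by contradiction using the semi-algebraic Curve Selection Lemma: if $\epsilon\phi(\epsilon)\not\to 0$, semi-algebraicity of $\phi$ forces $\epsilon\phi(\epsilon)\ge L$ on some interval $(0,\delta)$ for some $L>0$, and so the semi-algebraic set $T:=\{(x,\epsilon):\epsilon\calm\bar{f}_{\epsilon}(x)\ge L/2\}$ admits points with $x$ arbitrarily close to $\bar{x}$ for every $\epsilon\in(0,\delta)$, placing $(\bar{x},0)$ in the closure of $T$. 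Curve Selection then produces a semi-algebraic arc $(x(t),\epsilon(t))\to(\bar{x},0)$ lying in $T$. I would then exploit the ball-inclusion sandwich $g_{\bar{x}}(\epsilon-|x-\bar{x}|)\le g_{x}(\epsilon)\le g_{\bar{x}}(\epsilon+|x-\bar{x}|)$ from the proof of Proposition \ref{pro:calm-eq-derivative} to dominate the right difference quotients of $g_{x(t)}$ at $\epsilon(t)$ by those of $g_{\bar{x}}$ at a shifted point $\epsilon'(t)\downarrow 0$; the Puiseux parametrisations of $x(t)$ and $\epsilon(t)$ allow me to control this shift and so to force $g_{\bar{x}}'(\epsilon'(t))\ge L''/\epsilon'(t)$ along a subsequence, contradicting Proposition \ref{pro:f-epsilon-calm} at $\bar{x}$.

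Once $\phi(\epsilon)=o(1/\epsilon)$ is established, $\phi$ is finite for all small $\epsilon>0$, so $\bar{f}_{\epsilon}$ is Lipschitz at $\bar{x}$ with the claimed asymptotic rate. For the final assertion that $c=\phi$ on a punctured neighbourhood of $0$, I would invoke the semi-algebraic Monotonicity Theorem to extract $\bar{\epsilon}>0$ on which $g_{\bar{x}}$ is $C^{1}$ except at finitely many points, so that $c(\epsilon)=g_{\bar{x}}'(\epsilon)$ there by Proposition \ref{pro:calm-eq-derivative}. The semi-algebraic joint map $(x,\epsilon)\mapsto g_{x}'(\epsilon)$ is continuous at $(\bar{x},\epsilon)$ for all $\epsilon$ off a lower-dimensional stratum, so on the complement of a finite exceptional subset one obtains $\phi(\epsilon)=\limsup_{x\to\bar{x}}g_{x}'(\epsilon)=g_{\bar{x}}'(\epsilon)=c(\epsilon)$, and semi-algebraicity of $\phi-c$ on $(0,\bar{\epsilon})$ upgrades this to equality off a finite set, completing the proof.
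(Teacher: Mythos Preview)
Your plan differs from the paper's in structure: you aim to establish $\phi(\epsilon)=o(1/\epsilon)$ first and then $\phi=c$, whereas the paper proves $\phi=c$ directly on a small interval (after which $o(1/\epsilon)$ is immediate from Proposition~\ref{pro:f-epsilon-calm}). Both of your main steps contain genuine gaps.

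In the curve-selection step, the sandwich $g_{\bar x}(\epsilon-|x-\bar x|)\le g_x(\epsilon)\le g_{\bar x}(\epsilon+|x-\bar x|)$ controls \emph{values}, not derivatives. Knowing $\calm g_{x(t)}(\epsilon(t))\ge L/(2\epsilon(t))$ says only that $\limsup_{h\to 0}h^{-1}\bigl(g_{x(t)}(\epsilon(t)+h)-g_{x(t)}(\epsilon(t))\bigr)$ is large; feeding the numerator through the sandwich produces a difference of $g_{\bar x}$ over an interval of length $h+2|x(t)-\bar x|$, and since $h\to 0$ while $|x(t)-\bar x|$ is fixed at each $t$, the implied lower bound on $g_{\bar x}'$ collapses to zero. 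Puiseux data for $x(t),\epsilon(t)$ govern only the rates at which the arc approaches $(\bar x,0)$, not the ``calmness scale'' at which the large difference quotients of $g_{x(t)}$ are realised, so the transfer does not go through. In your final step, the assertion that $(x,\epsilon)\mapsto g_x'(\epsilon)$ is continuous at $(\bar x,\epsilon)$ for all but finitely many $\epsilon$ tacitly assumes the one-dimensional segment $\{\bar x\}\times(0,\bar\epsilon]$ meets the singular locus of the stratification only in isolated points; but that locus can have dimension up to $n$, and the segment may lie wholly inside it. This is precisely the configuration the paper isolates as the hard case. The paper handles it by a different mechanism: Lemma~\ref{lem:5-conditions} extracts a semi-algebraic piece $T_1$ abutting the segment on which $g_x'(\epsilon)\ge M_1$, Lemma~\ref{lem:continuous-curve-selection} builds a \emph{two-parameter} $C^1$ selection $\varphi(\delta,\epsilon)$ into $\cl T_1$ with $\varphi(0,\epsilon)=(\bar x,\epsilon)$, and an \emph{integral} comparison of $\int_{\hat\epsilon_1}^{\hat\epsilon_2} g_{\tilde x_\delta(s)}'(s)\,ds$ against $g_{\bar x}(\hat\epsilon_2)-g_{\bar x}(\hat\epsilon_1)$ as $\delta\to 0$ yields the contradiction. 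The two devices your sketch lacks are thus the two-parameter (not one-parameter) curve selection, which supplies a whole $\epsilon$-interval of nearby base points, and the integral (not pointwise) transfer of the derivative bound.
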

\begin{proof}
In view of Proposition \ref{pro:f-epsilon-calm}, we only need to
prove the there is some $\bar{\epsilon}>0$ such that $\lip\bar{f}_{\epsilon}\left(\bar{x}\right)=\calm\bar{f}_{\epsilon}\left(\bar{x}\right)$
for all $\epsilon\in(0,\bar{\epsilon}]$. We can assume that $g_{\bar{x}}$
is twice continuously differentiable in $(0,\bar{\epsilon}]$. The
graph of $G:\mathbb{R}^{n}\times\mathbb{R}_{+}\rightarrow\mathbb{R}_{+}$
as defined in Proposition \ref{pro:G-semialgebraic} is semi-algebraic,
so by the decomposition theorem \cite[Theorem 6.7]{C99}, there is
a finite partition of definable $\mathcal{C}^{2}$ manifolds $C_{1},\dots,C_{l}$
such that $G\mid_{C_{i}}$ is $\mathcal{C}^{2}$.

If the segment $\left\{ \bar{x}\right\} \times(0,\bar{\epsilon}]$
lies in the (relative) interior of one definable manifold, then \begin{eqnarray*}
\lip\bar{f}_{\epsilon}\left(\bar{x}\right) & = & \limsup_{\tilde{x}\rightarrow\bar{x}}\calm\bar{f}_{\epsilon}\left(\tilde{x}\right)\mbox{ (by Proposition }\ref{pro:lip-eq-cl-calm}\mbox{)}\\
 & = & \limsup_{\tilde{x}\rightarrow\bar{x}}g_{\tilde{x}}^{\prime}\left(\epsilon\right)\mbox{ (by Proposition }\ref{pro:calm-eq-derivative}\mbox{)}\\
 & = & g_{\bar{x}}^{\prime}\left(\epsilon\right)\\
 & = & \calm\bar{f}_{\epsilon}\left(\bar{x}\right),\end{eqnarray*}
 and we have nothing to do. Therefore, assume that the segment is
on the boundary of two or more of the $C_{i}$.

Since $G$ is semi-algebraic, the map $\epsilon\mapsto\limsup_{\alpha\rightarrow\epsilon}\lip\bar{f}_{\alpha}\left(\bar{x}\right)$
is semi-algebraic, so we can reduce $\bar{\epsilon}>0$ as necessary
such that either

(1) $\limsup_{\alpha\rightarrow\epsilon}\lip\bar{f}_{\alpha}\left(\bar{x}\right)<\calm\bar{f}_{\epsilon}\left(\bar{x}\right)$
for all $\epsilon\in(0,\bar{\epsilon}]$, or

(2) $\limsup_{\alpha\rightarrow\epsilon}\lip\bar{f}_{\alpha}\left(\bar{x}\right)=\calm\bar{f}_{\epsilon}\left(\bar{x}\right)$
for all $\epsilon\in(0,\bar{\epsilon}]$, or

(3) $\limsup_{\alpha\rightarrow\epsilon}\lip\bar{f}_{\alpha}\left(\bar{x}\right)>\calm\bar{f}_{\epsilon}\left(\bar{x}\right)$
for all $\epsilon\in(0,\bar{\epsilon}]$.

Case (1) cannot hold because $\lip\bar{f}_{\epsilon}\left(\bar{x}\right)\geq\calm\bar{f}_{\epsilon}\left(\bar{x}\right)$.
Case (2) is what we seek to prove, so we proceed to show that case
(3) cannot happen by contradiction.

We can choose $\tilde{\epsilon},M_{1},M_{2}>0$ such that $0<\tilde{\epsilon}<\bar{\epsilon}$
and \[
\calm\bar{f}_{\epsilon}\left(\bar{x}\right)<M_{2}<M_{1}<\limsup_{\alpha\rightarrow\epsilon}\lip\bar{f}_{\alpha}\left(\bar{x}\right)\mbox{ for all }\epsilon\in\left[\tilde{\epsilon},\bar{\epsilon}\right].\]
We state and prove a lemma important to the rest of the proof before
continuing.
\begin{lem}
\label{lem:5-conditions}There exists an interval $\left(\epsilon_{1},\epsilon_{2}\right)$
contained in $(\tilde{\epsilon},\bar{\epsilon}]$ and a manifold $T_{1}\subset\mathbb{R}^{n}\times\mathbb{R}_{+}$
such that

(1) $\left\{ \bar{x}\right\} \times\left(\epsilon_{1},\epsilon_{2}\right)\subset\cl\left(T_{1}\right)$.

(2) $T_{1}$ is an open $\mathcal{C}^{2}$ manifold. 

(3) $H:\mathbb{R}^{n}\times\mathbb{R}_{+}\rightarrow\mathbb{R}$,
defined by $H\left(x,\epsilon\right)=\bar{f}_{\epsilon}\left(x\right)$,
is $\mathcal{C}^{2}$ in $T_{1}$.

(4) For all $\left(x,\epsilon\right)\in T_{1}$, we have $M_{1}\leq g_{x}^{\prime}\left(\epsilon\right)<\infty$.

(5) $\left(x,\epsilon\right)\mapsto g_{x}^{\prime}\left(\epsilon\right)$
is continuous in $T_{1}$.
\end{lem}
\begin{proof}
Consider the set \[
T:=\left\{ \left(x,\epsilon\right)\mid M_{1}\leq g_{x}^{\prime}\left(\epsilon\right)<\infty\right\} .\]
First, we prove that $\left\{ \bar{x}\right\} \times\left[\tilde{\epsilon},\bar{\epsilon}\right]\subset\mbox{cl }T$.
It suffices to show that for all $\epsilon\in(\tilde{\epsilon},\bar{\epsilon}]$,
$\left(\bar{x},\epsilon\right)\in\mbox{cl }T$. This can in turn be
proven by showing that for all $\delta>0$, we can find $x^{\prime},\epsilon^{\prime}$
such that $\left|\bar{x}-x^{\prime}\right|<\delta$, $\left|\epsilon-\epsilon^{\prime}\right|<\delta$
such that $\left(x^{\prime},\epsilon^{\prime}\right)\in T$, or equivalently,
$M_{1}\leq g_{x^{\prime}}^{\prime}\left(\epsilon^{\prime}\right)<\infty$.

Since $\limsup_{\alpha\rightarrow\epsilon}\lip\bar{f}_{\alpha}\left(\bar{x}\right)>M_{1}$,
there is some $\epsilon^{\circ}$ such that $\left|\epsilon^{\circ}-\epsilon\right|<\frac{\delta}{2}$
and $\lip\bar{f}_{\epsilon^{\circ}}\left(\bar{x}\right)>M_{1}$.

Next, since\[
\limsup_{x\rightarrow\bar{x}}\left|\partial g_{x}\left(\epsilon^{\circ}\right)\right|\geq\limsup_{x\rightarrow\bar{x}}\calm\bar{f}_{\epsilon^{\circ}}\left(x\right)=\lip\bar{f}_{\epsilon^{\circ}}\left(\bar{x}\right),\]
there is some $x^{\prime}$ such that $\left|\bar{x}-x^{\prime}\right|<\delta$
and $\left|\partial g_{x^{\prime}}\left(\epsilon^{\circ}\right)\right|>\frac{1}{2}\lip\bar{f}_{\epsilon^{\circ}}\left(\bar{x}\right)+\frac{1}{2}M_{1}$. 

Finally, since $g_{x^{\prime}}\left(\cdot\right)$ is semi-algebraic,
we can find some $\epsilon^{\prime}$ such that $\left|\epsilon^{\prime}-\epsilon^{\circ}\right|<\frac{\delta}{2}$,
$g_{x^{\prime}}^{\prime}\left(\epsilon^{\prime}\right)$ is well defined
and finite, and \[
g_{x^{\prime}}^{\prime}\left(\epsilon^{\prime}\right)>\left|\partial g_{x^{\prime}}\left(\epsilon^{\circ}\right)\right|-\frac{1}{2}\left(\lip\bar{f}_{\epsilon^{\circ}}\left(\bar{x}\right)-M_{1}\right)>M_{1}.\]
This choice of $x^{\prime}$ and $\epsilon^{\prime}$ are easily verified
to satisfy the requirements stated.

By the decomposition theorem \cite[Theorem 6.7]{C99}, $T$ can be
decomposed into a finite disjoint union of $\mathcal{C}^{2}$ smooth
manifolds $T_{1},T_{2},\dots,T_{p}$ on which $H$ is $\mathcal{C}^{2}$.
Since $\left\{ \bar{x}\right\} \times\left[\tilde{\epsilon},\bar{\epsilon}\right]\subset\mbox{cl }T$,
there must be some $T_{i}$ and $\left(\epsilon_{1},\epsilon_{2}\right)$
such that $\left\{ \bar{x}\right\} \times\left(\epsilon_{1},\epsilon_{2}\right)\subset\mbox{cl }T_{i}$.
Without loss of generality, let one such $T_{i}$ be $T_{1}$.

Conditions (1), (2), (3) and (4) are automatically satisfied. Note
that $g_{x}^{\prime}\left(\epsilon\right)$ is exactly the derivative
of $H\left(\cdot,\cdot\right)$ with respect to the second coordinate,
and so Property (5) is satisfied. This concludes the proof of the
lemma.
\end{proof}
We now continue with the rest of the proof of the theorem. If $T_{1}$
is of dimension one, then we have $T_{1}\supset\left\{ \bar{x}\right\} \times\left(\epsilon_{1},\epsilon_{2}\right)$.
Recall that if the derivative $g_{\bar{x}}^{\prime}\left(\epsilon\right)$
exists, then $g_{\bar{x}}^{\prime}\left(\epsilon\right)=\calm\bar{f}_{\epsilon}\left(\bar{x}\right)$
by Proposition \ref{pro:calm-eq-derivative}. This would mean that
$\calm\bar{f}_{\epsilon}\left(\bar{x}\right)\geq M_{2}$, which contradicts
our earlier assumption of $\calm\bar{f}_{\epsilon}\left(\bar{x}\right)<M_{2}$.
Therefore, the manifold $T_{1}$ is of dimension at least two. 

Using Lemma \ref{lem:continuous-curve-selection} which we will prove
later, we can construct the map $\varphi:[0,1)\times\left(\hat{\epsilon}_{1},\hat{\epsilon}_{2}\right)\rightarrow\mbox{cl}T_{1}$,
such that its derivative with respect to the second variable exists
and is continuous, and $\varphi\left(0,\epsilon\right)=\left(\bar{x},\epsilon\right)$
for all $\epsilon\in\left(\hat{\epsilon}_{1},\hat{\epsilon}_{2}\right)$.

For each $0<\delta<1$, consider the path $\tilde{x}_{\delta}:\left[\hat{\epsilon}_{1},\hat{\epsilon}_{2}\right]\rightarrow\mathbb{R}^{n}$
defined by $\tilde{x}_{\delta}\left(\epsilon\right):=\varphi\left(\delta,\epsilon\right)$.
We have\begin{eqnarray*}
 &  & \bar{f}_{\hat{\epsilon}_{2}}\left(\tilde{x}_{\delta}\left(\hat{\epsilon}_{2}\right)\right)-\bar{f}_{\hat{\epsilon}_{1}}\left(\tilde{x}_{\delta}\left(\hat{\epsilon}_{1}\right)\right)\\
 & = & \int_{\hat{\epsilon}_{1}}^{\hat{\epsilon}_{2}}\nabla H\left(\tilde{x}_{\delta}\left(s\right),s\right)\cdot\left(\tilde{x}_{\delta}^{\prime}\left(s\right),1\right)ds\\
 & = & \int_{\hat{\epsilon}_{1}}^{\hat{\epsilon}_{2}}\nabla_{x}H\left(\tilde{x}_{\delta}\left(s\right),s\right)\cdot\tilde{x}_{\delta}^{\prime}\left(s\right)ds+\int_{\hat{\epsilon}_{1}}^{\hat{\epsilon}_{2}}\nabla_{s}H\left(\tilde{x}_{\delta}\left(s\right),s\right)ds,\end{eqnarray*}
where $H\left(x,\epsilon\right)=\bar{f}_{\epsilon}\left(x\right)$.
The second component of $\nabla H\left(\tilde{x}_{\delta}\left(s\right),s\right)$
is simply $g_{\tilde{x}_{\delta}\left(s\right)}^{\prime}\left(s\right)$.
The first component can be analyzed as follows: \begin{eqnarray*}
 &  & \nabla_{x}H\left(\tilde{x}_{\delta}\left(s\right),s\right)\cdot\tilde{x}_{\delta}^{\prime}\left(s\right)\\
 & = & \lim_{t\rightarrow0}\frac{1}{t}\left(H\left(\tilde{x}_{\delta}\left(s\right)+t\tilde{x}_{\delta}^{\prime}\left(s\right),s\right)-H\left(\tilde{x}_{\delta}\left(s\right),s\right)\right)\\
 & = & \lim_{t\rightarrow0}\frac{1}{t}\left(\bar{f}_{s}\left(\tilde{x}_{\delta}\left(s\right)+t\tilde{x}_{\delta}^{\prime}\left(s\right)\right)-\bar{f}_{s}\left(\tilde{x}_{\delta}\left(s\right)\right)\right).\end{eqnarray*}
Provided that $t\left|\tilde{x}_{\delta}^{\prime}\left(s\right)\right|<s$,
$\mathbb{B}_{s-t\left|\tilde{x}_{\delta}^{\prime}\left(s\right)\right|}\left(\tilde{x}_{\delta}\left(s\right)\right)\subset\mathbb{B}_{s}\left(\tilde{x}_{\delta}\left(s\right)+t\tilde{x}_{\delta}^{\prime}\left(s\right)\right)$,
and so\begin{eqnarray*}
 &  & \nabla_{x}H\left(\tilde{x}_{\delta}\left(s\right),s\right)\cdot\tilde{x}_{\delta}^{\prime}\left(s\right)\\
 & \geq & \lim_{t\rightarrow0}\frac{1}{t}\left(\bar{f}_{s-t\left|\tilde{x}_{\delta}^{\prime}\left(s\right)\right|}\left(\tilde{x}_{\delta}\left(s\right)\right)-\bar{f}_{s}\left(\tilde{x}_{\delta}\left(s\right)\right)\right)\\
 & = & \left|\tilde{x}_{\delta}^{\prime}\left(s\right)\right|\lim_{t\rightarrow0}\frac{1}{t\left|\tilde{x}_{\delta}^{\prime}\left(s\right)\right|}\left(\bar{f}_{s-t\left|\tilde{x}_{\delta}^{\prime}\left(s\right)\right|}\left(\tilde{x}_{\delta}\left(s\right)\right)-\bar{f}_{s}\left(\tilde{x}_{\delta}\left(s\right)\right)\right)\\
 & = & -\left|\tilde{x}_{\delta}^{\prime}\left(s\right)\right|g_{\tilde{x}_{\delta}\left(s\right)}^{\prime}\left(s\right).\end{eqnarray*}
Hence, \begin{eqnarray*}
 &  & \bar{f}_{\hat{\epsilon}_{2}}\left(\tilde{x}_{\delta}\left(\hat{\epsilon}_{2}\right)\right)-\bar{f}_{\hat{\epsilon}_{1}}\left(\tilde{x}_{\delta}\left(\hat{\epsilon}_{1}\right)\right)\\
 & = & \int_{\hat{\epsilon}_{1}}^{\hat{\epsilon}_{2}}\nabla_{x}H\left(\tilde{x}_{\delta}\left(s\right),s\right)\cdot\tilde{x}_{\delta}^{\prime}\left(s\right)ds+\int_{\hat{\epsilon}_{1}}^{\hat{\epsilon}_{2}}\nabla_{s}H\left(\tilde{x}_{\delta}\left(s\right),s\right)ds\\
 & \geq & \int_{\hat{\epsilon}_{1}}^{\hat{\epsilon}_{2}}\left(1-\left|\tilde{x}_{\delta}^{\prime}\left(s\right)\right|\right)g_{\tilde{x}_{\delta}\left(s\right)}^{\prime}\left(s\right)ds.\end{eqnarray*}
Since the derivatives of $\varphi$ are continuous, $\tilde{x}_{\delta}^{\prime}\left(s\right)\rightarrow\tilde{x}_{0}^{\prime}\left(s\right)=0$
as $\delta\rightarrow0$ for $\hat{\epsilon}_{1}<s<\hat{\epsilon}_{2}$.
In fact, the term $\left|\tilde{x}_{\delta}^{\prime}\left(s\right)\right|$
converges to zero uniformly in $\left[\hat{\epsilon}_{1},\hat{\epsilon}_{2}\right]$.
To see this, recall that $\tilde{x}_{\delta}^{\prime}\left(s\right)$
is a partial derivative of $\varphi$. Since $\varphi$ is $\mathcal{C}^{1}$,
$\tilde{x}_{\delta}^{\prime}\left(s\right)$ is continuous with respect
to $s$ and $\delta$. For any $\beta>0$ and $s\in\left[\hat{\epsilon}_{1},\hat{\epsilon}_{2}\right]$,
there exists $\gamma_{s}$ such that \[
\left|\tilde{x}_{\delta}^{\prime}\left(\tilde{s}\right)\right|<\beta\mbox{ if }\delta<\gamma_{s}\mbox{ and }\left|\tilde{s}-s\right|<\gamma_{s}.\]
The existence of $\gamma$ such that \[
\left|\tilde{x}_{\delta}^{\prime}\left(s\right)\right|<\beta\mbox{ if }\delta<\gamma\mbox{ and }s\in\left[\hat{\epsilon}_{1},\hat{\epsilon}_{2}\right]\]
follows by the compactness of $\left[\hat{\epsilon}_{1},\hat{\epsilon}_{2}\right]$.
So we may choose $\delta$ small enough so that \[
\left(1-\left|\tilde{x}_{\delta}^{\prime}\left(s\right)\right|\right)>\frac{M_{1}+M_{2}}{2M_{1}}\mbox{ for all }s\in\left[\hat{\epsilon}_{1},\hat{\epsilon}_{2}\right].\]

Now, for $\delta$ small enough and $i=1,2$, we have $g_{\bar{x}}^{\prime}\left(\hat{\epsilon}_{i}\right)<M_{2}$,
so this gives us $\calm\bar{f}_{\hat{\epsilon}_{i}}\left(\bar{x}\right)=g_{\bar{x}}^{\prime}\left(\hat{\epsilon}_{i}\right)<M_{2}$
by Proposition \ref{pro:calm-eq-derivative}. Therefore, if $\delta$
is small enough, \[
\left|\bar{f}_{\hat{\epsilon}_{i}}\left(\tilde{x}_{\delta}\left(\hat{\epsilon}_{i}\right)\right)-\bar{f}_{\hat{\epsilon}_{i}}\left(\bar{x}\right)\right|\leq M_{2}\left|\tilde{x}_{\delta}\left(\hat{\epsilon}_{i}\right)-\bar{x}\right|.\]

Recall that if the derivative $g_{\bar{x}}^{\prime}\left(\epsilon\right)$
exists, then $g_{\bar{x}}^{\prime}\left(\epsilon\right)=\calm\bar{f}_{\epsilon}\left(\bar{x}\right)$
by Proposition \ref{pro:calm-eq-derivative}. On the one hand,  we
have \[
\bar{f}_{\hat{\epsilon}_{2}}\left(\bar{x}\right)-\bar{f}_{\hat{\epsilon}_{1}}\left(\bar{x}\right)=\int_{\hat{\epsilon}_{1}}^{\hat{\epsilon}_{2}}g_{\bar{x}}^{\prime}\left(s\right)ds\leq\int_{\hat{\epsilon}_{1}}^{\hat{\epsilon}_{2}}M_{2}ds=M_{2}\left(\hat{\epsilon}_{2}-\hat{\epsilon}_{1}\right).\]
But on the other hand, $\tilde{x}_{\delta}\left(s\right)\in T_{1}$
for $0<\delta<1$, and so $g_{\tilde{x}_{\delta}\left(s\right)}^{\prime}\left(s\right)\geq M_{1}$
by Lemma \ref{lem:5-conditions}. If $\delta$ is small enough, we
have \begin{eqnarray*}
 &  & \left|\bar{f}_{\hat{\epsilon}_{2}}\left(\bar{x}\right)-\bar{f}_{\hat{\epsilon}_{1}}\left(\bar{x}\right)\right|\\
 & \geq & \left|\bar{f}_{\hat{\epsilon}_{2}}\left(\tilde{x}_{\delta}\left(\hat{\epsilon}_{2}\right)\right)-\bar{f}_{\hat{\epsilon}_{1}}\left(\tilde{x}_{\delta}\left(\hat{\epsilon}_{1}\right)\right)\right|\\
 &  & \,\,\,\,\,\,-\left(\left|\bar{f}_{\hat{\epsilon}_{2}}\left(\tilde{x}_{\delta}\left(\hat{\epsilon}_{2}\right)\right)-\bar{f}_{\hat{\epsilon}_{2}}\left(\bar{x}\right)\right|+\left|\bar{f}_{\hat{\epsilon}_{1}}\left(\tilde{x}_{\delta}\left(\hat{\epsilon}_{1}\right)\right)-\bar{f}_{\hat{\epsilon}_{1}}\left(\bar{x}\right)\right|\right)\\
 & \geq & \int_{\hat{\epsilon}_{1}}^{\hat{\epsilon}_{2}}\left(1-\left|\tilde{x}_{\delta}^{\prime}\left(s\right)\right|\right)g_{\tilde{x}_{\delta}\left(s\right)}^{\prime}\left(s\right)ds\\
 &  & \,\,\,\,\,\,-M_{2}\left(\left|\tilde{x}_{\delta}\left(\hat{\epsilon}_{2}\right)-\bar{x}\right|+\left|\tilde{x}_{\delta}\left(\hat{\epsilon}_{1}\right)-\bar{x}\right|\right)\\
 & \geq & \int_{\hat{\epsilon}_{1}}^{\hat{\epsilon}_{2}}\left(1-\left|\tilde{x}_{\delta}^{\prime}\left(s\right)\right|\right)M_{1}ds-M_{2}\left(\left|\tilde{x}_{\delta}\left(\hat{\epsilon}_{2}\right)-\bar{x}\right|+\left|\tilde{x}_{\delta}\left(\hat{\epsilon}_{1}\right)-\bar{x}\right|\right)\\
 & \geq & \int_{\hat{\epsilon}_{1}}^{\hat{\epsilon}_{2}}\left(\frac{M_{1}+M_{2}}{2}\right)ds-M_{2}\left(\left|\tilde{x}_{\delta}\left(\hat{\epsilon}_{2}\right)-\bar{x}\right|+\left|\tilde{x}_{\delta}\left(\hat{\epsilon}_{1}\right)-\bar{x}\right|\right)\\
 & = & \left(\frac{M_{1}+M_{2}}{2}\right)\left(\hat{\epsilon}_{2}-\hat{\epsilon}_{1}\right)-M_{2}\left(\left|\tilde{x}_{\delta}\left(\hat{\epsilon}_{2}\right)-\bar{x}\right|+\left|\tilde{x}_{\delta}\left(\hat{\epsilon}_{1}\right)-\bar{x}\right|\right).\end{eqnarray*}
As $\delta$ is arbitrarily small and the terms $\left|\tilde{x}_{\delta}\left(\hat{\epsilon}_{i}\right)-\bar{x}\right|\rightarrow0$
as $\delta\rightarrow0$ for $i=1,2$, we have $\left|\bar{f}_{\hat{\epsilon}_{2}}\left(\bar{x}\right)-\bar{f}_{\hat{\epsilon}_{1}}\left(\bar{x}\right)\right|\geq\left(\frac{M_{1}+M_{2}}{2}\right)\left(\hat{\epsilon}_{2}-\hat{\epsilon}_{1}\right)$.
This is a contradiction, and thus we are done.
\end{proof}
Before we prove Lemma \ref{lem:continuous-curve-selection} below,
we need to recall the definition of simplicial complexes from \cite[Section 3.2.1]{C02}.
A \emph{simplex} with vertices $a_{0},\dots,a_{d}$ is \begin{eqnarray*}
\left[a_{0},\dots,a_{d}\right] & = & \{ x\in\mathbb{R}^{n}\mid\exists\lambda_{0},\dots,\lambda_{d}\in\left[0,1\right],\\
 &  & \qquad\qquad\qquad\sum_{i=0}^{d}\lambda_{i}=1\mbox{ and }x=\sum_{i=0}^{d}\lambda_{i}a_{i}.\}\end{eqnarray*}
The corresponding \emph{open simplex} is\begin{eqnarray*}
\left(a_{0},\dots,a_{d}\right) & = & \{ x\in\mathbb{R}^{n}\mid\exists\lambda_{0},\dots,\lambda_{d}\in\left(0,1\right),\\
 &  & \qquad\qquad\qquad\sum_{i=0}^{d}\lambda_{i}=1\mbox{ and }x=\sum_{i=0}^{d}\lambda_{i}a_{i}.\}\end{eqnarray*}
We shall denote by $\mbox{int}\left(\sigma\right)$ the open simplex
corresponding to the simplex $\sigma$. A face of the simplex $\sigma=\left[a_{0},\dots,a_{d}\right]$
is a simplex $\tau=\left[b_{0},\dots,b_{e}\right]$ such that \[
\left\{ b_{0},\dots,b_{e}\right\} \subset\left\{ a_{0},\dots,a_{d}\right\} .\]

A \emph{finite simplicial complex} in $\mathbb{R}^{n}$ is a finite
collection $K=\left\{ \sigma_{1},\dots,\sigma_{p}\right\} $ of simplices
$\sigma_{i}\subset\mathbb{R}^{n}$ such that, for every $\sigma_{i},\sigma_{j}\in K$,
the intersection $\sigma_{i}\cap\sigma_{j}$ is either empty or is
a common face of $\sigma_{i}$ and $\sigma_{j}$. We set $\left|K\right|=\cup_{\sigma_{i}\in K}\sigma_{i}$;
this is a semi-algebraic subset of $\mathbb{R}^{n}$. We recall a
result on relating semi-algebraic sets to simplicial complexes.

\begin{thm}
\label{thm:[C02,3.12]}\cite[Theorem 3.12]{C02} Let $S\subset\mathbb{R}^{n}$
be a compact semi-algebraic set, and $S_{1},\dots,S_{p}$, semi-algebraic
subsets of $S$. Then there exists a finite simplicial complex $K$
in $\mathbb{R}^{n}$ and a semi-algebraic homeomorphism $h:\left|K\right|\rightarrow S$,
such that each $S_{k}$ is the image by $h$ of a union of open simplices
of $K$. 
\end{thm}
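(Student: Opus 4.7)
My plan is to prove the semi-algebraic triangulation theorem by induction on the ambient dimension $n$, with the engine of the argument being a cylindrical algebraic decomposition (CAD) of $\mathbb{R}^n$ adapted to the sets $S, S_1, \ldots, S_p$. By the CAD theorem (itself a consequence of Tarski--Seidenberg), one obtains a finite partition of $\mathbb{R}^n$ into semi-algebraic cells, each cell semi-algebraically homeomorphic to an open cube $(0,1)^{d}$, every cell is either contained in or disjoint from each of $S, S_1, \ldots, S_p$, and the cells are cylindrically arranged with respect to the projection $\pi\colon\mathbb{R}^n\to\mathbb{R}^{n-1}$. Compactness of $S$ ensures only finitely many cells are relevant.

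The base case $n=1$ is immediate: a bounded semi-algebraic subset of $\mathbb{R}$ is a finite disjoint union of points and open intervals, so taking the endpoints and isolated points as vertices yields the required complex. For the inductive step, the cylindrical structure presents each cell in $\mathbb{R}^n$ over a base cell $D\subset \mathbb{R}^{n-1}$ as either the graph of a continuous semi-algebraic function $\xi_i\colon D\to\mathbb{R}$, with $\xi_1<\cdots<\xi_k$ on $D$, or as an open ``band'' between two consecutive such graphs. Applying the induction hypothesis to $\pi(S)$ together with the images under $\pi$ of each $S_i$ and of the graph cells, I obtain a finite simplicial complex $L$ in $\mathbb{R}^{n-1}$ and a semi-algebraic homeomorphism $h_0\colon |L|\to\pi(S)$ realising each relevant base set as a union of open simplices.

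The lifting is then carried out by coning from barycenters. Over each open simplex $\tau$ of $L$ lying in a base cell $D$, I introduce at the barycenter $b_\tau$ the lifted vertices $(b_\tau,\xi_i\circ h_0(b_\tau))$ for $i=1,\ldots,k$, together with the analogous lifted vertices inductively constructed over the proper faces of $\tau$. I then fill in simplices by coning the lifted structure over $\partial\tau$ to each barycenter vertex, at each graph level, and insert additional simplices that sweep out each band by joining consecutive levels. The homeomorphism $h$ is declared to be piecewise affine in simplicial coordinates, sending each new simplex linearly to its corresponding chunk of the cell over $h_0(\tau)$.

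The chief obstacle is the combinatorial bookkeeping of this lifting: one must verify that the simplices produced over adjacent base simplices match along shared faces, that the whole collection is a simplicial complex, and that $h$ is a well-defined semi-algebraic homeomorphism. Barycentric coning is exactly the tool that secures this, because the simplices on any face $\sigma\subset\tau$ depend only on data already assigned to $\sigma$, giving automatic compatibility across adjacent cells of the CAD. Once this matching is checked and one verifies that the graphs and bands (hence each $S_i$) are realised as unions of open simplices, the induction closes.
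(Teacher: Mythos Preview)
The paper does not actually prove this theorem: it is quoted verbatim from Coste's lecture notes \cite[Theorem 3.12]{C02} and used as a black box in the proof of Lemma~\ref{lem:continuous-curve-selection}. So there is no ``paper's own proof'' to compare against; you are in effect reconstructing the cited reference.

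Your outline is the standard route (induction on $n$ via a cylindrical decomposition, triangulate the base by induction, then lift by barycentric coning), and matches what Coste does. One point you pass over too quickly, however, is the continuous extension of the graph functions $\xi_i\circ h_0$ from each open base simplex $\tau$ to its closure $\overline{\tau}$. Without this, the ``lifted vertices over the proper faces of $\tau$'' that you invoke are not well defined, and the coning construction cannot be glued across adjacent base simplices. The extension does hold---a bounded continuous semi-algebraic function on an open simplex extends continuously to the closed simplex---but this is a genuine lemma (proved in Coste via curve selection), not a triviality, and it is precisely what makes the inductive lifting go through. You also need the induction hypothesis applied not merely to $\pi(S)$ and the $\pi(S_i)$, but to the full collection of base cells of the CAD, so that over each open simplex of $L$ the number and ordering of the $\xi_i$ is constant; otherwise graphs may coalesce in the interior of a base simplex and the band/graph bookkeeping breaks down. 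Once these two points are made explicit, your sketch is the correct argument.
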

We need yet another result for the proof of Lemma \ref{lem:continuous-curve-selection}.

\begin{prop}
\label{pro:continuity-proposition}Suppose that $\phi:\left(0,1\right)^{2}\rightarrow\mathbb{R}$,
not necessarily semi-algebraic, is continuous in $\left(0,1\right)^{2}$.
Let $\gph\phi\subset\left(0,1\right)^{2}\times\mathbb{R}$ be the
graph of $\phi$. Then for any $t\in\left(0,1\right)$, $\cl\left(\gph\phi\right)\cap\left(0,t\right)\times\mathbb{R}$
is either a single point or a connected line segment.
\end{prop}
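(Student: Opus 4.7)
The plan is to reduce the dichotomy ``single point or connected line segment'' to three structural facts about the set $E := \cl(\gph\phi) \cap ((0,t) \times \mathbb{R})$: it is (i) closed, (ii) contained in a single line, and (iii) connected. Closedness and ``lying on a line'' should be essentially immediate from the construction of $E$ as a set of cluster values of $\phi$ along a one-dimensional boundary fibre, cut out of the closure of the graph by an open set of a specific form. Given (i)--(iii), the conclusion is automatic, since a closed connected subset of $\mathbb{R}$ is either a single point or a closed bounded interval.

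The nontrivial step is (iii), and I would prove it by contradiction. Suppose $E = A \sqcup B$ with $A$ and $B$ nonempty and relatively closed in $E$. By normality of the ambient Euclidean space, choose disjoint open sets $U \supset A$ and $V \supset B$. Pick cluster values $a \in A$ and $b \in B$; by definition of the closure of the graph, there are sequences of domain points in $(0,1)^{2}$ whose images under $\phi$ converge to $a$ and $b$ respectively. For each large $n$, connect the two corresponding domain points by the straight segment inside $(0,1)^{2}$ (possible since the domain is convex). The intermediate value theorem, applied to the continuous restriction of $\phi$ to this segment, produces a point $\zeta_{n}$ at which $\phi(\zeta_{n})$ lies in the ``gap'' $\mathbb{R} \setminus (U \cup V)$. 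A compactness argument then extracts a convergent subsequence of $(\zeta_{n}, \phi(\zeta_{n}))$, whose limit supplies a new element of $E$ belonging to neither $A$ nor $B$ -- the desired contradiction.

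The main obstacle is ensuring that the accumulation point of $\zeta_{n}$ still lies in the prescribed slice $(0,t) \times \mathbb{R}$ rather than drifting outside it. I would handle this by choosing the two approaching sequences to share a common compact sub-slab of $(0,t)$ on their relevant coordinate, so that the connecting straight segments, and hence the intermediate points $\zeta_{n}$, are trapped inside that slab and inherit its limiting behaviour. Because $\phi$ is only assumed continuous (not semi-algebraic), no definable-geometry or stratification tool is available; the entire argument is forced to rely purely on topology -- continuity of $\phi$, convexity (hence path-connectedness) of the domain $(0,1)^{2}$, the intermediate value theorem, and standard compactness.
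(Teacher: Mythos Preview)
Your approach is correct and shares its core mechanism with the paper's proof: apply the intermediate value theorem to $\phi$ along straight segments in $(0,1)^{2}$ joining domain points near $(0,t)$ whose $\phi$-values approximate two prescribed cluster values. The paper simply runs this directly---given $a_{1},a_{2}$ in the fibre and any $\alpha\in(a_{1},a_{2})$, it finds for each $\epsilon>0$ points $p_{1},p_{2}\in(0,1)^{2}$ within $\epsilon$ of $(0,t)$ with $\phi(p_{i})$ near $a_{i}$, and IVT on $[p_{1},p_{2}]$ yields $p$ with $\phi(p)=\alpha$ and $|p-(0,t)|<\epsilon$---thereby establishing convexity of the fibre without your detour through a separation $E=A\sqcup B$, disjoint open neighbourhoods, and a compactness extraction.
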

\begin{proof}
Suppose that $\left(\left(0,t\right),a_{1}\right)$ and $\left(\left(0,t\right),a_{2}\right)$
lie in $\mbox{cl}\left(\gph\phi\right)$. We need to show that for
any $\alpha\in\left(a_{1},a_{2}\right)$, $\left(\left(0,t\right),\alpha\right)$
lies in $\mbox{cl}\left(\gph\phi\right)$.

For any $\epsilon>0$, we can find points $p_{1},p_{2}\in\left(0,1\right)^{2}$
such that the points $\left(p_{1},\tilde{a}_{1}\right),\left(p_{2},\tilde{a}_{2}\right)\in\gph\phi$
are such that $\left|\tilde{a}_{i}-a_{i}\right|<\epsilon$ and $\left|p_{i}-\left(0,t\right)\right|<\epsilon$
for $i=1,2$. Recall that by definition $\tilde{a}_{i}=\phi\left(p_{i}\right)$
for $i=1,2$. Choose $\epsilon$ such that $\tilde{a}_{1}+\epsilon<\tilde{a}_{2}-\epsilon$.
By the intermediate value theorem, for any $\alpha\in\left(\tilde{a}_{1}+\epsilon,\tilde{a}_{2}-\epsilon\right)$,
there exists a point $p$ in the line segment $\left[p_{1},p_{2}\right]$
such that $\phi\left(p\right)=\alpha$. Moreover, $\left|p-\left(0,t\right)\right|<\max_{i=1,2}\left|p_{i}-\left(0,t\right)\right|$.
Letting $\epsilon\rightarrow0$, we see that $\left(\left(0,t\right),\alpha\right)\in\cl\left(\gph\phi\right)$
as needed.
\end{proof}
We now prove our last result important for the proof of Theorem \ref{thm:lip<infty}.
The proof of the lemma below is similar to the proof of the Curve
Selection Lemma in \cite[Theorem 3.13]{C02}.

\begin{lem}
\label{lem:continuous-curve-selection} Let $S\subset\mathbb{R}^{n}$
be a semi-algebraic set, and $\tau:\left[\epsilon_{1},\epsilon_{2}\right]\rightarrow\mathbb{R}^{n}$
be a semi-algebraic curve such that $\tau\left(\left[\epsilon_{1},\epsilon_{2}\right]\right)\cap S=\emptyset$
and $\tau\left(\left[\epsilon_{1},\epsilon_{2}\right]\right)\subset\cl\left(S\right)$.
Then there exists a function $\varphi:\left[0,1\right]\times\left[\hat{\epsilon}_{1},\hat{\epsilon}_{2}\right]\rightarrow\mathbb{R}^{n}$,
with $\left[\hat{\epsilon}_{1},\hat{\epsilon}_{2}\right]\neq\emptyset$
and $\left[\hat{\epsilon}_{1},\hat{\epsilon}_{2}\right]\subset\left[\epsilon_{1},\epsilon_{2}\right]$,
such that 

(1) $\varphi\left(0,\epsilon\right)=\tau\left(\epsilon\right)$ for
$\epsilon\in\left[\hat{\epsilon}_{1},\hat{\epsilon}_{2}\right]$ and
$\varphi\left((0,1]\times\left[\hat{\epsilon}_{1},\hat{\epsilon}_{2}\right]\right)\subset S$. 

(2) The partial derivative of $\varphi$ with respect to the second
variable, which we denote by $\frac{\partial}{\partial\epsilon}\varphi$,
exists and is continuous in $\left[0,1\right]\times\left[\hat{\epsilon}_{1},\hat{\epsilon}_{2}\right]$.
\end{lem}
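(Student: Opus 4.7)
The strategy is to adapt the standard semi-algebraic Curve Selection Lemma \cite[Theorem 3.13]{C02} to the parametric setting, where a whole curve (rather than a single boundary point) must be pushed into $S$. After replacing $S$ by $S\cap \overline{\mathbb{B}}$ for a large closed ball $\overline{\mathbb{B}}$ containing $\tau([\epsilon_1,\epsilon_2])$ in its interior, we may assume $\cl(S)$ is compact; this modification affects only behavior far from $\tau$, which is irrelevant. I then apply Theorem \ref{thm:[C02,3.12]} to $\cl(S)$ with the distinguished subsets $S$, $\cl(S)\setminus S$, and $\tau([\epsilon_1,\epsilon_2])$, obtaining a finite simplicial complex $K$ in $\mathbb{R}^n$ and a semi-algebraic homeomorphism $h:|K|\to \cl(S)$ such that each of these three sets is the image under $h$ of a union of open simplices. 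A $\mathcal{C}^2$-refinement of this triangulation, standard in the semi-algebraic category, further ensures that $h$ is $\mathcal{C}^2$ on each closed simplex $\overline{\sigma}$, with compatible derivatives along common faces.

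Next I study the pullback curve $\gamma := h^{-1}\circ\tau:[\epsilon_1,\epsilon_2]\to |K|$, which is continuous and semi-algebraic. By finiteness of $K$ and semi-algebraic decomposition of $[\epsilon_1,\epsilon_2]$, there is a sub-interval $[\hat{\epsilon}_1,\hat{\epsilon}_2]\subset[\epsilon_1,\epsilon_2]$ on which $\gamma$ is $\mathcal{C}^2$ and $\gamma([\hat{\epsilon}_1,\hat{\epsilon}_2])\subset\inT\sigma_0$ for some open simplex $\sigma_0$ of $K$. The hypothesis $\tau([\epsilon_1,\epsilon_2])\cap S=\emptyset$ gives $h(\sigma_0)\subset\cl(S)\setminus S$; on the other hand, $\tau([\epsilon_1,\epsilon_2])\subset\cl(S)$, so every point of $\sigma_0$ lies in the closure of $h^{-1}(S)$. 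Since $h^{-1}(S)$ is a union of open simplices of $K$, there must exist an open simplex $\sigma_1$ with $h(\sigma_1)\subset S$ and with $\sigma_0$ a proper face of $\overline{\sigma_1}$.

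Picking the barycenter $c$ of $\sigma_1$, I define
\[
\tilde{\varphi}(\delta,\epsilon) := (1-\delta)\gamma(\epsilon) + \delta c,
\qquad
\varphi(\delta,\epsilon) := h(\tilde{\varphi}(\delta,\epsilon)).
\]
For $\delta=0$, $\varphi(0,\epsilon)=h(\gamma(\epsilon))=\tau(\epsilon)$. For $\delta\in(0,1]$, the segment from $\gamma(\epsilon)\in\sigma_0$ to $c\in\inT\sigma_1$ lies in the convex closed simplex $\overline{\sigma_1}$ and enters $\inT\sigma_1$ as soon as $\delta>0$, so $\tilde{\varphi}(\delta,\epsilon)\in\inT\sigma_1$ and hence $\varphi(\delta,\epsilon)\in h(\inT\sigma_1)\subset S$. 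This establishes property (1).

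For property (2), $\tilde{\varphi}$ is manifestly $\mathcal{C}^2$ with $\partial_\epsilon\tilde{\varphi}(\delta,\epsilon)=(1-\delta)\gamma'(\epsilon)$. The main technical obstacle is joint continuity of $\partial_\epsilon\varphi$ across $\delta=0$: since $\tilde{\varphi}(0,\epsilon)$ lies in $\sigma_0$ while $\tilde{\varphi}(\delta,\epsilon)\in\inT\sigma_1$ for $\delta>0$, a priori the Jacobian of $h$ computed on $\sigma_1$ need not match that on $\sigma_0$ along their common face. This is overcome precisely by the $\mathcal{C}^2$-triangulation refinement above, which yields a differential $Dh$ continuous on all of $\overline{\sigma_1}$. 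Then
\[
\partial_\epsilon\varphi(\delta,\epsilon) = Dh(\tilde{\varphi}(\delta,\epsilon))\,(1-\delta)\gamma'(\epsilon)
\]
is jointly continuous on $[0,1]\times[\hat{\epsilon}_1,\hat{\epsilon}_2]$, and at $\delta=0$ it reduces to $Dh(\gamma(\epsilon))\gamma'(\epsilon)=\tau'(\epsilon)$, as required.
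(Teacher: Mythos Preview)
Your construction for property~(1) is essentially identical to the paper's: compactify, triangulate via Theorem~\ref{thm:[C02,3.12]}, pass to a subinterval where the pullback curve sits in a single open face $\sigma_0$, pick an adjacent simplex $\sigma_1$ mapping into $S$, and push toward its barycenter. No complaint there.

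The gap is in your treatment of property~(2). You assert that ``a $\mathcal{C}^2$-refinement of this triangulation, standard in the semi-algebraic category,'' makes $h$ of class $\mathcal{C}^2$ on each closed simplex with compatible derivatives along common faces. This is \emph{not} standard: the triangulation theorem you cite (Theorem~\ref{thm:[C02,3.12]}, from Coste's notes) produces only a semi-algebraic \emph{homeomorphism}. The existence of a semi-algebraic $\mathcal{C}^1$ triangulation is a substantially deeper result (Ohmoto--Shiota, Czapla, circa 2012--2017), and $\mathcal{C}^2$ triangulation is harder still; neither is available from the toolkit the paper invokes. Without it, your chain-rule formula $\partial_\epsilon\varphi=Dh(\tilde\varphi)(1-\delta)\gamma'$ has no meaning at the boundary face $\sigma_0$, and the claimed matching $Dh(\gamma(\epsilon))\gamma'(\epsilon)=\tau'(\epsilon)$ is exactly the compatibility you have assumed rather than proved.

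The paper avoids this entirely. It applies the decomposition theorem \cite[Theorem 6.7]{C99} directly to the composite $\varphi=h\circ\delta$ (not to $h$) to obtain $\varphi\in\mathcal{C}^1$ on $(0,\bar t]\times[\hat\epsilon_1,\hat\epsilon_2]$ after shrinking the interval, and then argues continuity of $\partial_\epsilon\varphi$ across $\{0\}\times[\hat\epsilon_1,\hat\epsilon_2]$ by a bespoke argument: a mean-value construction shows the boundary values $\partial_\epsilon\varphi(0,\epsilon)=\tau'(\epsilon)$ lie in the closure of the graph of $\partial_\epsilon\varphi|_{(0,\bar t]\times[\hat\epsilon_1,\hat\epsilon_2]}$, and then a dimension count combined with Proposition~\ref{pro:continuity-proposition} forces that closure to be single-valued over each boundary point (possibly after a further contraction). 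This is more laborious than your chain-rule line, but it uses only the elementary semi-algebraic machinery actually on hand. If you want to keep your approach, you must either cite a genuine $\mathcal{C}^1$-triangulation theorem and check it delivers the face-compatibility you need, or replace the last paragraph with an argument of the paper's type.
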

\begin{proof}
Replacing $S$ with its intersection with a closed bounded set containing
$\tau\left(\left[\epsilon_{1},\epsilon_{2}\right]\right)$, we can
assume $S$ is bounded. Then $\mbox{cl}\left(S\right)$ is a compact
semi-algebraic set. By Theorem \ref{thm:[C02,3.12]}, there is a finite
simplicial complex $K$ and a semi-algebraic homeomorphism $h:\left|K\right|\rightarrow\mbox{cl}\left(S\right)$,
such that $S$ and $\tau\left(\left[\epsilon_{1},\epsilon_{2}\right]\right)$
are images by $h$ of a union of open simplices in $K$ . In particular,
this means that there is an open interval $\left(\hat{\epsilon}_{1},\hat{\epsilon}_{2}\right)\subset\left[\epsilon_{1},\epsilon_{2}\right]$
such that $\tau\left(\left(\hat{\epsilon}_{1},\hat{\epsilon}_{2}\right)\right)$
is an image by $h$ of a 1-dimensional open simplex in $K$. Since
$h^{-1}\circ\tau\left(\left(\hat{\epsilon}_{1},\hat{\epsilon}_{2}\right)\right)$
is in $\mbox{cl}\left(S\right)$ but not in $S$, there is a simplex
$\sigma$ of $K$ which has $h^{-1}\circ\tau\left(\left[\hat{\epsilon}_{1},\hat{\epsilon}_{2}\right]\right)$
lying in the boundary of $\sigma$, and $h\left(\mbox{int}\left(\sigma\right)\right)\subset S$. 

Let $\hat{\sigma}$ be the barycenter of $\sigma$. Define the map
$\delta:\left[0,1\right]\times\left[\hat{\epsilon}_{1},\hat{\epsilon}_{2}\right]\rightarrow\mathbb{R}^{n}$
by \[
\delta\left(t,\epsilon\right)=\left(1-t\right)h^{-1}\circ\tau\left(\epsilon\right)+t\hat{\sigma}.\]
The map above satisfies $\delta\left((0,1]\times\left(\hat{\epsilon}_{1},\hat{\epsilon}_{2}\right)\right)\subset\mbox{int}\left(\sigma\right)$.
By contracting the interval $\left[\hat{\epsilon}_{1},\hat{\epsilon}_{2}\right]$
slightly, $\varphi=h\circ\delta$ satisfies property (1). 

By contracting the interval $\left[\hat{\epsilon}_{1},\hat{\epsilon}_{2}\right]$
if necessary and applying the decomposition theorem \cite[Theorem 6.7]{C99},
we can assume that $\varphi$ is $\mathcal{C}^{1}$ in the set $(0,\bar{t}]\times\left[\hat{\epsilon}_{1},\hat{\epsilon}_{2}\right]$
for some $\bar{t}\in\left(0,1\right)$.

Since $\tau$ is semi-algebraic, we contract the interval $\left[\hat{\epsilon}_{1},\hat{\epsilon}_{2}\right]$
again if necessary so that $\tau$ is $\mathcal{C}^{1}$ there. Therefore,
$\frac{\partial}{\partial\epsilon}\varphi$ exists in $\left[0,\bar{t}\right]\times\left[\hat{\epsilon}_{1},\hat{\epsilon}_{2}\right]$.
It remains to show that $\frac{\partial}{\partial\epsilon}\varphi$
is continuous in $\left[0,\bar{t}\right]\times\left[\hat{\epsilon}_{1},\hat{\epsilon}_{2}\right]$.
We do this by showing that $\frac{\partial}{\partial\epsilon}\varphi_{i}:\left[0,\bar{t}\right]\times\left[\hat{\epsilon}_{1},\hat{\epsilon}_{2}\right]\rightarrow\mathbb{R}$,
the $i$th component of the derivative with respect to the second
variable, is continuous for each $i$.

Since $\frac{\partial}{\partial\epsilon}\varphi_{i}$ is continuous
in $(0,\bar{t}]\times\left[\hat{\epsilon}_{1},\hat{\epsilon}_{2}\right]$,
it remains to show that it is continuous at every point in $\left\{ 0\right\} \times\left[\hat{\epsilon}_{1},\hat{\epsilon}_{2}\right]$.
The graph of $\frac{\partial}{\partial\epsilon}\varphi_{i}$ corresponding
to the domain $(0,\bar{t}]\times\left[\hat{\epsilon}_{1},\hat{\epsilon}_{2}\right]$,
which we denote by $\gph\left(\frac{\partial}{\partial\epsilon}\varphi_{i}\right)$,
is a subset of $(0,\bar{t}]\times\left[\hat{\epsilon}_{1},\hat{\epsilon}_{2}\right]\times\mathbb{R}$.
We show that $\left(\left(0,\epsilon\right),\frac{\partial}{\partial\epsilon}\varphi_{i}\left(0,\epsilon\right)\right)\in\cl\left(\gph\left(\frac{\partial}{\partial\epsilon}\varphi_{i}\right)\right)$.
For small $t_{1},t_{2}>0$, consider $\varphi_{i}\left(t_{1},\epsilon-t_{2}\right)$
and $\varphi_{i}\left(t_{1},\epsilon+t_{2}\right)$. By the intermediate
value theorem, there is some $\tilde{\epsilon}\in\left(\epsilon-t_{2},\epsilon+t_{2}\right)$
such that \[
\frac{\partial}{\partial\epsilon}\varphi_{i}\left(t_{1},\tilde{\epsilon}\right)=\frac{1}{2t_{2}}\left(\varphi_{i}\left(t_{1},\epsilon+t_{2}\right)-\varphi_{i}\left(t_{1},\epsilon-t_{2}\right)\right).\]
If $t_{2}$ were chosen such that \[
\left|\frac{1}{2t_{2}}\left(\varphi_{i}\left(0,\epsilon+t_{2}\right)-\varphi_{i}\left(0,\epsilon-t_{2}\right)\right)-\frac{\partial}{\partial\epsilon}\varphi_{i}\left(0,\epsilon\right)\right|\]
is small and $t_{1}$ is chosen such that \[
\left|\frac{1}{2t_{2}}\left(\varphi_{i}\left(t_{1},\epsilon+t_{2}\right)-\varphi_{i}\left(t_{1},\epsilon-t_{2}\right)\right)-\frac{1}{2t_{2}}\left(\varphi_{i}\left(0,\epsilon+t_{2}\right)-\varphi_{i}\left(0,\epsilon-t_{2}\right)\right)\right|\]
is small, then $\left|\frac{\partial}{\partial\epsilon}\varphi_{i}\left(t_{1},\tilde{\epsilon}\right)-\frac{\partial}{\partial\epsilon}\varphi_{i}\left(0,\epsilon\right)\right|$
is small. Taking $t_{2}\rightarrow0$ and $t_{1}\rightarrow0$, we
have $\left(\left(0,\epsilon\right),\frac{\partial}{\partial\epsilon}\varphi_{i}\left(0,\epsilon\right)\right)\in\cl\left(\gph\left(\frac{\partial}{\partial\epsilon}\varphi_{i}\right)\right)$
as desired. 

Recall that the graph $\gph\left(\frac{\partial}{\partial\epsilon}\varphi_{i}\right)$
is taken corresponding to the domain $(0,\bar{t}]\times\left[\hat{\epsilon}_{1},\hat{\epsilon}_{2}\right]$,
and is a manifold of dimension $2$ in $\mathbb{R}^{3}$. Its boundary
is of dimension $1$ \cite[Proposition 3.16]{C02}, so the intersection
of $\cl\left(\gph\left(\frac{\partial}{\partial\epsilon}\varphi_{i}\right)\right)$
with $\left\{ 0\right\} \times\left[\hat{\epsilon}_{1},\hat{\epsilon}_{2}\right]\times\mathbb{R}$
is of dimension $1$ as well, and is homeomorphic to a closed line
segment. There cannot be an interval $\left[\tilde{\epsilon}_{1},\tilde{\epsilon}_{2}\right]\subset\left[\hat{\epsilon}_{1},\hat{\epsilon}_{2}\right]$
on which $\cl\left(\gph\left(\frac{\partial}{\partial\epsilon}\varphi_{i}\right)\right)\cap\left\{ 0\right\} \times\left\{ \epsilon\right\} \times\mathbb{R}$
has more than one value for all $\epsilon\in\left[\tilde{\epsilon}_{1},\tilde{\epsilon}_{2}\right]$
because by appealing to Proposition \ref{pro:continuity-proposition},
this implies that the dimension cannot be $1$. We note however that
it is possible that there exists an $\bar{\epsilon}\in\left[\hat{\epsilon}_{1},\hat{\epsilon}_{2}\right]$
such that $\cl\left(\gph\left(\frac{\partial}{\partial\epsilon}\varphi_{i}\right)\right)\cap\left\{ 0\right\} \times\left\{ \bar{\epsilon}\right\} \times\mathbb{R}$
is a $1$-dimensional line segment. This can only happen for only
finitely many $\bar{\epsilon}\in\left[\hat{\epsilon}_{1},\hat{\epsilon}_{2}\right]$
due to semi-algebraicity.

In any case, we can contract the interval $\left[\hat{\epsilon}_{1},\hat{\epsilon}_{2}\right]$
if necessary so that $\cl\left(\gph\left(\frac{\partial}{\partial\epsilon}\varphi_{i}\right)\right)\cap\left\{ 0\right\} \times\left\{ \epsilon\right\} \times\mathbb{R}$
is a single point for all $\epsilon\in\left[\hat{\epsilon}_{1},\hat{\epsilon}_{2}\right]$.
This means that for any $\left(t,\tilde{\epsilon}\right)\rightarrow\left(0,\epsilon\right)$,
we have $\frac{\partial}{\partial\epsilon}\varphi_{i}\left(t,\tilde{\epsilon}\right)\rightarrow\frac{\partial}{\partial\epsilon}\varphi_{i}\left(0,\epsilon\right)$,
establishing the continuity of $\frac{\partial}{\partial\epsilon}\varphi_{i}\left(\cdot,\cdot\right)$
on $\left[0,\bar{t}\right]\times\left[\hat{\epsilon}_{1},\hat{\epsilon}_{2}\right]$.
A reparametrization allows us to assume that $\bar{t}=1$, and we
are done.
\end{proof}

\section{\label{sec:quad-exa}Quadratic examples}

In this section, we show how the robust regularization can be calculated
for quadratic examples, which are more-or-less standard in the spirit
of \cite{Boy94,BtN}. We write $A\succeq0$ for a real symmetric matrix
$A$ if $A$ is positive semidefinite.

\begin{thm}
(Euclidean norm) For any real $m\times n$ matrix $A$ and vector
$b\in\R^{m}$, consider the function $g:\R^{n}\rt\R$ defined by \[
g(x)=\| Ax+b\|_{2},\]
Then the following properties are equivalent for any point $(x,t)\in\R^{n}\times\R$: 

\begin{enumerate} \item[{\rm (i)}]$t\geq\bar{g}_{\epsilon}\left(x\right)$

\item[{\rm (ii)}]there exists a real $\mu$ such that \begin{eqnarray*}
\left[\begin{array}{ccc}
tI_{m} & Ax+b & \e A\\
(Ax+b)^{T} & t-\mu & 0\\
\e A^{T} & 0 & \mu I_{n}\end{array}\right] & \succeq & 0.\end{eqnarray*}
\end{enumerate}
\end{thm}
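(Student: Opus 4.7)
The plan is to combine two standard tools: the S-lemma (to convert the supremum defining $\bar{g}_\e$ into a parametrized LMI) and the Schur complement (to lift that LMI into the block form stated in (ii)).

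Set $y := Ax+b$. Since $\bar{g}_\e(x) = \sup\{\|y+Ae\| : \|e\|\leq\e\} \geq 0$, condition (i) is equivalent to $t \geq 0$ together with the quadratic implication
\[
\|e\|^2 \leq \e^2 \;\Longrightarrow\; t^2 - \|y + Ae\|^2 \geq 0 \quad\text{for all } e \in \R^n.
\]
Assume $\e > 0$ (the case $\e = 0$ reduces to $t \geq \|y\|$ and is a direct Schur complement calculation). The constraint $\e^2 - \|e\|^2 \geq 0$ has a strict interior, so the S-lemma applies losslessly: the implication is equivalent to the existence of $\mu' \geq 0$ such that $t^2 - \|y+Ae\|^2 - \mu'(\e^2 - \|e\|^2) \geq 0$ for every $e$. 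Expanding the left-hand side as a quadratic in the augmented variable $(e,1)$, this is in turn equivalent to the LMI
\[
\begin{bmatrix} \mu' I_n - A^T A & -A^T y \\ -y^T A & t^2 - \|y\|^2 - \mu' \e^2 \end{bmatrix} \succeq 0. \qquad (\ast)
\]

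Next, I would match $(\ast)$ with the block LMI in (ii) via a Schur complement. When $t > 0$, the matrix in (ii) is PSD iff $tI_m \succ 0$ and its Schur complement about $tI_m$ is PSD; after scaling by $t$, that Schur complement reads
\[
\begin{bmatrix} t^2 - t\mu - \|y\|^2 & -\e y^T A \\ -\e A^T y & t\mu I_n - \e^2 A^T A \end{bmatrix} \succeq 0.
\]
Permuting rows and columns of $(\ast)$ and then conjugating by the positive diagonal matrix $\mathrm{diag}(1,\e I_n)$ produces exactly this matrix, provided we set $\mu' = t\mu/\e^2$; since $t,\e > 0$, this reparametrization establishes a bijection between admissible $\mu' \geq 0$ and admissible $\mu \geq 0$ (nonnegativity being forced on either side by the S-lemma and by the $\mu I_n$ block in (ii), respectively). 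The remaining degenerate cases are direct: if $t = 0$, the block LMI in (ii) forces $y = 0$, $\e A = 0$ and $\mu = 0$, giving $\bar{g}_\e(x) = 0 = t$; and the case $\e = 0$ is handled similarly.

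The main obstacle will be the bookkeeping of the reparametrization $\mu' = t\mu/\e^2$ and the corresponding congruence transformation linking $(\ast)$ to the Schur complement of (ii). The appearance of the factor $t$ in the substitution is easy to miss, and one must check that the sign and range of $\mu$ in (ii) -- enforced by the PSD condition on the $\mu I_n$ block -- correctly align with the nonnegativity of the S-lemma multiplier $\mu'$. Apart from this, the argument reduces to a direct application of standard S-lemma and Schur complement results.
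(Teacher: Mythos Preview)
Your argument is correct. The paper's own proof simply invokes \cite[Thm~4.5.60]{BtN}, which asserts that $t\geq\bar{g}_{\e}(x)$ is equivalent to the existence of reals $s,\mu$ with $t\geq s$ and the same block matrix (with $s$ in place of $t$) positive semidefinite; it then observes that one may take $s=t$ by monotonicity of the diagonal. Your proposal is essentially a self-contained derivation of that cited result: the S-lemma step followed by the Schur complement and the congruence $\mathrm{diag}(1,\e I_n)$ is exactly how such robust-norm constraints are reduced to LMIs in the Ben-Tal--Nemirovski framework. So the two routes coincide mathematically; yours trades brevity for independence from the reference, and in doing so makes the reparametrization $\mu'=t\mu/\e^2$ (and the attendant degenerate cases $t=0$, $\e=0$) explicit rather than hidden inside the citation.
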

\begin{proof}
Applying \cite[Thm 4.5.60]{BtN} shows $t\geq\bar{g}_{\e}(x)$ holds
if and only if there exist real $s$ and $\mu$ satisfying \begin{eqnarray*}
t-s & \geq & 0\\
\left[\begin{array}{ccc}
sI_{m} & Ax+b & \e A\\
(Ax+b)^{T} & s-\mu & 0\\
\e A^{T} & 0 & \mu I_{n}\end{array}\right] & \succeq & 0.\end{eqnarray*}
 and the result now follows immediately. 
\end{proof}
\noindent Since the matrix in property (ii) above is an affine function
of the variables $x$, $t$ and $\mu$, it follows that the robust
regularization $\bar{g}_{\e}$ is {}``semidefinite-representable'',
in the language of \cite{BtN}. This result allows us to use $\bar{g}_{\e}$
in building tractable representations of convex optimization problems
as semidefinite programs.

An easy consequence of the above result is a representation for the
robust regularization of any strictly convex quadratic function.

\begin{cor}
(quadratics) For any real positive definite $n$-by-$n$ matrix $H$,
vector $c\in\R^{n}$, and scalar $d$, consider the function $h:\R^{n}\rt\R$
defined by \[
h(x)=x^{T}Hx+2c^{T}x+d.\]
Then the following properties are equivalent for any point $(x,t)\in\R^{n}\times\R$: 

\begin{enumerate} \item[{\rm (i)}]$t\ge\bar{h}_{\e}(x)$;

\item[{\rm (ii)}]there exist reals $s$ and $\mu$ such that \begin{eqnarray*}
t-s^{2}+c^{T}H^{-1}c-d & \geq & 0\\
\left[\begin{array}{ccc}
sI_{n} & H^{1/2}x+H^{1/2}c & \e H^{1/2}\\
(H^{1/2}x+H^{-1/2}c)^{T} & s-\mu & 0\\
\e H^{1/2} & 0 & \mu I_{n}\end{array}\right] & \succeq & 0.\end{eqnarray*}

\end{enumerate}
\end{cor}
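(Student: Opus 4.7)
The natural plan is to reduce the corollary to the preceding theorem about the Euclidean norm by completing the square. Writing
\[
h(x) \;=\; x^{T}Hx + 2c^{T}x + d \;=\; \bigl\|H^{1/2}x + H^{-1/2}c\bigr\|_{2}^{2} + \bigl(d - c^{T}H^{-1}c\bigr),
\]
so that, with $A := H^{1/2}$, $b := H^{-1/2}c$, and $g(x):=\|Ax+b\|_{2}$ as in the previous theorem, we have $h(x) = g(x)^{2} + d - c^{T}H^{-1}c$. Because $y \mapsto y^{2}$ is nondecreasing on $[0,\infty)$ and $g \ge 0$, maximizing $h$ and maximizing $g$ over $\mathbb{B}_{\epsilon}(x)$ are achieved at the same points, giving the identity
\[
\bar{h}_{\epsilon}(x) \;=\; \bar{g}_{\epsilon}(x)^{2} + d - c^{T}H^{-1}c.
\]

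Next I would rewrite the condition $t \ge \bar{h}_{\epsilon}(x)$ using a slack variable. By the identity above, $t \ge \bar{h}_{\epsilon}(x)$ is equivalent to
\[
t - d + c^{T}H^{-1}c \;\ge\; \bar{g}_{\epsilon}(x)^{2},
\]
which in turn, using a nonnegative $s \ge 0$, is equivalent to the existence of $s$ satisfying simultaneously $s \ge \bar{g}_{\epsilon}(x)$ and $s^{2} \le t - d + c^{T}H^{-1}c$, i.e.\ $t - s^{2} + c^{T}H^{-1}c - d \ge 0$. (Note $\bar g_\epsilon(x)\ge0$ and $t\ge \bar h_\epsilon(x)\ge d-c^TH^{-1}c$, so nonnegativity of $s$ is automatic on the feasible side.)

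Finally I would invoke the previous theorem with $A=H^{1/2}$, $b=H^{-1/2}c$, and with the scalar $t$ there replaced by $s$: the inequality $s \ge \bar{g}_{\epsilon}(x)$ holds if and only if there exists a real $\mu$ making the $3\times 3$ block matrix
\[
\begin{bmatrix}
sI_{n} & H^{1/2}x + H^{-1/2}c & \epsilon H^{1/2}\\
(H^{1/2}x + H^{-1/2}c)^{T} & s-\mu & 0\\
\epsilon H^{1/2} & 0 & \mu I_{n}
\end{bmatrix}
\;\succeq\; 0.
\]
Combining this LMI with the scalar inequality $t - s^{2} + c^{T}H^{-1}c - d \ge 0$ gives exactly property (ii), establishing the equivalence. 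The only real work is the completing-the-square identity and the elementary fact that $\tau \ge \alpha^{2}$ (for $\alpha\ge0$) is equivalent to the existence of $s\ge\alpha$ with $s^{2}\le\tau$; the substantive content is inherited from the Euclidean-norm theorem, so there is no genuine obstacle beyond bookkeeping.
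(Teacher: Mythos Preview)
Your argument is correct and follows essentially the same route as the paper: complete the square to write $h(x)=\|H^{1/2}x+H^{-1/2}c\|_2^2+d-c^TH^{-1}c$, introduce the slack variable $s$ so that $t\ge\bar h_\epsilon(x)$ becomes $s^2\le t-d+c^TH^{-1}c$ together with $s\ge\bar g_\epsilon(x)$, and then invoke the Euclidean-norm theorem with $A=H^{1/2}$, $b=H^{-1/2}c$. Your extra remark that $s\ge0$ is automatic (since $s\ge\bar g_\epsilon(x)\ge0$, or equivalently since the LMI forces $sI_n\succeq0$) is a helpful clarification the paper leaves implicit.
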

\begin{proof}
Clearly $t\geq\bar{h}_{\e}(x)$ if and only if \[
\Vert y-x\Vert_{2}\leq\e~~\Rightarrow~~\Vert H^{1/2}y+H^{-1/2}c\Vert_{2}^{2}\leq t-d+c^{T}H^{-1}c.\]
 This property in turn is equivalent to the existence of a real $s$
satisfying \begin{eqnarray*}
s^{2} & \leq & t-d+c^{T}H^{-1}c~~\mbox{and}\\
\Vert y-x\Vert_{2}\leq\e & \Rightarrow & \Vert H^{1/2}y+H^{-1/2}c\Vert_{2}\leq s,\end{eqnarray*}
 and the result now follows from the preceding theorem. 
\end{proof}
\noindent Since the quadratic inequality \[
t-s^{2}+c^{T}H^{-1}c-d\geq0\]
 is semidefinite-representable, so is the robust regularization $\bar{h}_{\e}$.

\section{\label{sec:The-condition}$1$-peaceful sets}

In this section, we prove that $X\subset\mathbb{R}^{n}$ is nearly
radial implies $X$ is $1$-peaceful using the Mordukhovich Criterion
\cite[Theorem 9.40]{RW98}, which relates the Lipschitz modulus of
set-valued maps to normal cones of its graph. The next section discusses
further properties of nearly radial sets and how they are common in
analysis.

The Mordukhovich Criterion requires the domain of the set-valued map
to be $\mathbb{R}^{n}$, so we recall the map $\tilde{\Phi}_{\epsilon}:\mathbb{R}^{n}\rightrightarrows\mathbb{R}^{n}$
by $\tilde{\Phi}_{\epsilon}\left(x\right)=\mathbb{B}_{\epsilon}\left(x\right)\cap X$.
Recall that $\tilde{\Phi}_{\epsilon}|_{X}=\Phi_{\epsilon}$ and $\lip\Phi_{\epsilon}\left(x\right)\leq\lip\tilde{\Phi}_{\epsilon}\left(x\right)$
for all $x\in X$. Let us recall the definitions of normal cones,
the Aubin property and the graphical modulus. 

\begin{defn}
\cite[Definition 6.3]{RW98} Let $X\subset\mathbb{R}^{n}$ and $\bar{x}\in X$.
A vector $v$ is normal to $X$ at $\bar{x}$ in the regular sense,
or a \emph{regular normal}, written $v\in\hat{N}_{X}\left(\bar{x}\right)$,
if\[
\left\langle v,x-\bar{x}\right\rangle \leq o\left(\left|x-\bar{x}\right|\right)\mbox{ for }x\in X.\]
It is normal to $X$ at $\bar{x}$ in the general sense, or simply
a \emph{normal} vector, written $v\in N_{X}\left(\bar{x}\right)$,
if there are sequences $x^{\nu}\xrightarrow[X]{}\bar{x}$ and $v^{\nu}\xrightarrow[X]{}v$
with $v^{\nu}\in\hat{N}_{X}\left(x^{\nu}\right)$.
\begin{defn}
\cite[Definition 9.36]{RW98} For $X\subset\mathbb{R}^{n}$, a mapping
$S:X\rightrightarrows\mathbb{R}^{m}$ has the \emph{Aubin property
at $\bar{x}$ for $\bar{u}$}, where $\bar{x}\in X$ and $\bar{u}\in S\left(\bar{x}\right)$,
if $\gph S$ is locally closed at $\left(\bar{x},\bar{u}\right)$
and there are neighborhoods $V$ of $\bar{x}$ and $W$ of $\bar{u}$
such that \[
S\left(x^{\prime}\right)\cap W\subset S\left(x\right)+\kappa\left|x^{\prime}-x\right|\mathbb{B}\mbox{ for all }x,x^{\prime}\in X\cap V.\]
The \emph{graphical modulus of $S$ at $\bar{x}$ for $\bar{u}$}
is \begin{eqnarray*}
\lip S\left(\bar{x}\mid\bar{u}\right) & := & \inf\{\kappa\mid\mbox{There are neighbourhoods }\\
 &  & \qquad\qquad V\mbox{ of }\bar{x}\mbox{, }W\mbox{ of }\bar{u}\mbox{ such that }\\
 &  & \qquad\qquad S\left(x^{\prime}\right)\cap W\subset S\left(x\right)+\kappa\left|x^{\prime}-x\right|\mathbb{B}\\
 &  & \qquad\qquad\mbox{ for all }x,x^{\prime}\in X\cap V\}.\end{eqnarray*}
If $S$ is single-valued at $\bar{x}$, then in keeping with the notation
of $\lipx$ in Definition \ref{def:calm-and-lip}, we write $\lip S\left(\bar{x}\right)$
instead of $\lip S\left(\bar{x}\mid S\left(\bar{x}\right)\right)$.
Note that this equals $\lip S\left(\bar{x}\right)$ if $S$ is continuous
at $\bar{x}$. $\diamond$
\end{defn}
\end{defn}
A set-valued map $S$ is \emph{locally compact} around $\bar{x}$
if there exist a neighborhood $V$ of $\bar{x}$ and a compact set
$C\subset Y$ such that $S\left(V\right)\subset C$. This is equivalent
to $S\left(V\right)$ being a bounded set, which is the case when
$S$ is outer semicontinuous and $S\left(\bar{x}\right)$ is bounded.
If $S$ is outer semicontinuous and locally compact at $\bar{x}$,
then by \cite[Theorem 1.42]{Mor06}, the Lipschitz modulus and the
Aubin property are related by \[
\lipi S\left(\bar{x}\right)=\max_{\bar{u}\in S\left(\bar{x}\right)}\left\{ \lip S\left(\bar{x}\mid\bar{u}\right)\right\} .\]
In finite dimensions, we need $S\left(\bar{x}\right)$ to be bounded
and $S$ to be outer semicontinuous for the formula above to hold. 

Here is a lemma on convex cones. 

\begin{lem}
\label{lem:simple-12.22}Given any two convex cones $C_{1}$ and $C_{2}$
polar to each other and any vector $x$, we have \[
\left(d\left(x,C_{1}\right)\right)^{2}+\left(d\left(x,C_{2}\right)\right)^{2}=\left\Vert x\right\Vert ^{2}\]

\end{lem}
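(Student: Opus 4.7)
The plan is to invoke the classical Moreau decomposition. Since $C_1$ is a closed convex cone (being the polar of $C_2$), the metric projection $P_{C_1}(x)$ is well-defined and unique; denote it by $x_1$, and set $x_2 := x - x_1$. I would first verify the standard variational characterization: $x_1 = P_{C_1}(x)$ if and only if $x_1 \in C_1$, $\langle x - x_1, x_1\rangle = 0$, and $\langle x - x_1, y\rangle \le 0$ for all $y \in C_1$. The last condition says exactly that $x_2 = x - x_1 \in C_1^\circ = C_2$, while the middle condition gives $\langle x_1, x_2\rangle = 0$.

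Next I would show that $x_2 = P_{C_2}(x)$. Since the bipolar of a closed convex cone equals itself, $C_1 = C_2^\circ$, so the symmetric characterization applies: we need $x - x_2 = x_1 \in C_1 = C_2^\circ$ (already established), $\langle x - x_2, x_2\rangle = \langle x_1, x_2\rangle = 0$, and $x_2 \in C_2$. All three hold, so $x_2$ is indeed the projection of $x$ onto $C_2$.

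Finally, from $x = x_1 + x_2$ with $\langle x_1, x_2\rangle = 0$, the Pythagorean identity yields
\[
\|x\|^2 = \|x_1\|^2 + \|x_2\|^2.
\]
Now $d(x, C_1)^2 = \|x - x_1\|^2 = \|x_2\|^2$ and $d(x, C_2)^2 = \|x - x_2\|^2 = \|x_1\|^2$, so summing gives the desired equality.

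The only delicate step is the variational characterization of the projection onto a closed convex cone, but this is standard: it follows immediately from writing the first-order optimality condition for minimizing $\tfrac12\|x - y\|^2$ over $y \in C_1$, using that $C_1$ is a cone (so both $tx_1$ for $t \ge 0$ and $x_1 + ty$ for $y \in C_1$, $t \ge 0$ are feasible perturbations). No deep machinery is required, and the whole argument is a few lines once the characterization is in hand.
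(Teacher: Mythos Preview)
Your proof is correct and is essentially the same approach as the paper's: the paper simply cites \cite[Exercise 12.22]{RW98}, which is precisely the Moreau decomposition you have written out in full. You supply the details of that exercise rather than invoking it, but the underlying argument is identical.
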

\begin{proof}
This is a simple consequence of \cite[Exercise 12.22]{RW98}
\end{proof}
We now present our result on the relation between $1$-peaceful sets
and nearly radial sets.

\begin{thm}
\label{pro:nearly-radial-Lipschitz-1}If $X$ is nearly radial at
$\bar{x}$, then $X$ is $1$-peaceful at $\bar{x}$. The converse
holds if $X$ is subdifferentially regular for all points in a neighborhood
around $\bar{x}$.
\end{thm}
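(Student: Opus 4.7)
The plan is to compute the graphical modulus $\lip\tilde{\Phi}_\epsilon(\bar{x}\mid\bar{u})$ explicitly via the Mordukhovich criterion, reducing both directions to the geometry of $N_X(\bar{u})$ relative to the radial direction $\bar{x}-\bar{u}$. Writing $\gph\tilde{\Phi}_\epsilon=(\mathbb{R}^n\times X)\cap\{(x,u):|u-x|\leq\epsilon\}$, the outward tube-normal $(\bar{x}-\bar{u},\bar{u}-\bar{x})$ has a nonzero second component whenever $\lambda>0$, so the normal-cone sum rule applies transversally and yields, for boundary points $(\bar{x},\bar{u})$ with $|\bar{u}-\bar{x}|=\epsilon$,
\[
N_{\gph\tilde{\Phi}_\epsilon}(\bar{x},\bar{u})=\{(\lambda(\bar{x}-\bar{u}),\lambda(\bar{u}-\bar{x})+n):\lambda\geq 0,\ n\in N_X(\bar{u})\}.
\]
Converting to coderivatives, invoking the Mordukhovich criterion, exploiting that $N_X(\bar{u})$ is a cone, and combining with the relation $\lip\tilde{\Phi}_\epsilon(\bar{x})=\max_{\bar{u}\in\tilde{\Phi}_\epsilon(\bar{x})}\lip\tilde{\Phi}_\epsilon(\bar{x}\mid\bar{u})$ (valid by outer semicontinuity and compact values), I would obtain the clean formula
\[
\lip\tilde{\Phi}_\epsilon(\bar{x}\mid\bar{u})=\frac{1}{\mathrm{dist}(e_{\bar{u}},N_X(\bar{u}))},\qquad e_{\bar{u}}:=\frac{\bar{x}-\bar{u}}{|\bar{x}-\bar{u}|},
\]
at boundary points, while interior points of the ball give $\lip=0$.

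For the forward direction, fix $\delta>0$; near-radial at $\bar{x}$ supplies $\rho>0$ such that $\mathrm{dist}(e_u,T_X(u))<\delta$ for every $u\in X\cap\mathbb{B}_\rho(\bar{x})\setminus\{\bar{x}\}$. Pick any small $\epsilon$ and boundary point $\bar{u}\in\tilde{\Phi}_\epsilon(\bar{x})$. For $n\in N_X(\bar{u})$, choose $u^\nu\to\bar{u}$ in $X$ with $n^\nu\to n$ and $n^\nu\in\hat{N}_X(u^\nu)=T_X(u^\nu)^\circ$; polarity gives $\langle n^\nu,t^\nu\rangle\leq 0$ for $t^\nu\in T_X(u^\nu)$ approximating $e_{u^\nu}$ within $\delta$, hence $\langle n^\nu,e_{u^\nu}\rangle\leq\delta|n^\nu|$, and the limit is $\langle n,e_{\bar{u}}\rangle\leq\delta|n|$. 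A one-line expansion
\[
|e_{\bar{u}}-n|^2=1-2\langle e_{\bar{u}},n\rangle+|n|^2\geq 1-2\delta|n|+|n|^2\geq 1-\delta^2
\]
shows $\mathrm{dist}(e_{\bar{u}},N_X(\bar{u}))\geq\sqrt{1-\delta^2}$, so $\lip\tilde{\Phi}_\epsilon(\bar{x}\mid\bar{u})\leq 1/\sqrt{1-\delta^2}$. Since $\delta$ was arbitrary, $\limsup_{\epsilon\downarrow 0}\lip\tilde{\Phi}_\epsilon(\bar{x})\leq 1$.

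For the converse, assume subdifferential regularity throughout a neighborhood of $\bar{x}$ and suppose $X$ fails to be nearly radial: some $u^\nu\to\bar{x}$ in $X$ and some $c>0$ satisfy $\mathrm{dist}(e_{u^\nu},T_X(u^\nu))\geq c$. Clarke regularity makes $T_X(u^\nu)$ and $N_X(u^\nu)$ polar convex cones, so Lemma~\ref{lem:simple-12.22} applied to the unit vector $e_{u^\nu}$ gives $\mathrm{dist}(e_{u^\nu},N_X(u^\nu))\leq\sqrt{1-c^2}<1$. Setting $\epsilon^\nu:=|u^\nu-\bar{x}|$, the formula of the first paragraph produces $\lip\tilde{\Phi}_{\epsilon^\nu}(\bar{x})\geq\lip\tilde{\Phi}_{\epsilon^\nu}(\bar{x}\mid u^\nu)\geq 1/\sqrt{1-c^2}>1$, so $\limsup_{\epsilon\downarrow 0}\lip\tilde{\Phi}_\epsilon(\bar{x})>1$, contradicting $1$-peacefulness.

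The main technical obstacle is the polarity step in the forward direction: the identity $\hat{N}_X(u)=T_X(u)^\circ$ lives pointwise at $u$, so transferring $\langle n,e\rangle\leq\delta|n|$ to a limiting normal $n\in N_X(\bar{u})$ requires the near-radial inequality to hold uniformly at every nearby $u^\nu\to\bar{u}$ rather than just at $\bar{u}$. This is precisely what the neighborhood version of nearly radial supplies, and why that definition is exactly matched to the $1$-peaceful conclusion.
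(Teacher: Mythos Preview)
Your argument is correct and follows the same overall architecture as the paper: compute $N_{\gph\tilde\Phi_\epsilon}$ via the intersection rule, feed it into the Mordukhovich criterion to obtain $\lip\tilde\Phi_\epsilon(\bar x\mid\bar u)\le \|\bar x-\bar u\|/d(\bar x-\bar u,N_X(\bar u))$, and then convert the nearly radial hypothesis (a statement about $T_X$) into a lower bound on $d(\bar x-\bar u,N_X(\bar u))$.

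One technical slip: under the constraint qualification alone the sum rule gives only the \emph{inclusion}
\[
N_{\gph\tilde\Phi_\epsilon}(\bar x,\bar u)\subset\bigl(\{0\}\times N_X(\bar u)\bigr)+\mathbb R_+\{(\bar x-\bar u,\bar u-\bar x)\},
\]
not equality; consequently your displayed formula for $\lip\tilde\Phi_\epsilon(\bar x\mid\bar u)$ is in general only an inequality ``$\le$''. This does not harm the forward direction, which only needs the upper bound, and in the converse you assume regularity in a neighborhood, which upgrades the inclusion to equality---so the logic survives.

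Where you and the paper differ is in the polarity step of the forward direction. The paper first proves the infimum identity
\[
\inf_{y}\frac{d(\bar x-y,N_X(y))}{\|\bar x-y\|}=\inf_{y}\frac{d(\bar x-y,\hat N_X(y))}{\|\bar x-y\|}
\]
(using that limiting normals are limits of regular normals at nearby points), and then invokes Lemma~\ref{lem:simple-12.22} via $T_X(y)^*=\hat N_X(y)$ to trade $\hat N_X$ for $T_X$. You instead work directly with a single limiting normal $n\in N_X(\bar u)$, approximate it by $n^\nu\in\hat N_X(u^\nu)$, apply polarity pointwise at $u^\nu$ against a tangent $t^\nu$ close to $e_{u^\nu}$, and pass to the limit in the scalar inequality $\langle n^\nu,e_{u^\nu}\rangle\le\delta|n^\nu|$; the one-line completion-of-squares then replaces the Moreau decomposition lemma entirely. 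Your route is a bit more elementary in the forward direction (it avoids Lemma~\ref{lem:simple-12.22} there), while the paper's route packages the same sequential idea into the infimum identity and then reuses Lemma~\ref{lem:simple-12.22} symmetrically in both directions. Your closing remark correctly pinpoints why the nearly radial hypothesis must be uniform in a neighborhood: that uniformity is exactly what lets you apply the tangent-cone bound at each approximating $u^\nu$ rather than only at $\bar u$.
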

\begin{proof}
The graph of $\tilde{\Phi}_{\epsilon}$ is the intersection of $\mathbb{R}^{n}\times X$
and the set $D\subset\mathbb{R}^{n}\times\mathbb{R}^{n}$ defined
by \[
D:=\left\{ \left(x,y\right)\mid\left\Vert x-y\right\Vert \leq\epsilon\right\} .\]
By applying a rule on the normal cones of products of sets \cite[Proposition 6.41]{RW98},
we infer that $N_{\mathbb{R}^{n}\times X}\left(x,y\right)=\left\{ \mathbf{0}\right\} \times N_{X}\left(y\right)$.
Define the real valued function $g_{0}:\mathbb{R}^{n}\times\mathbb{R}^{n}\rightarrow\mathbb{R}_{+}$
by $g_{0}\left(x,y\right):=\frac{1}{2}\left\Vert x-y\right\Vert ^{2}$.
Then the gradient of $g_{0}$ is $\nabla g_{0}\left(x,y\right)=\left(x-y,y-x\right)$. 

From this point, we assume that $\left\Vert x-y\right\Vert =\epsilon$.
The normal cone of $D$ at $\left(x,y\right)$ is $N_{D}\left(x,y\right)=\mathbb{R}_{+}\left\{ \left(x-y,y-x\right)\right\} $
using \cite[Exercise 6.7]{RW98}. On applying a rule on the normal
cones of intersections \cite[Theorem 6.42]{RW98}, we get \begin{equation}
N_{\scriptsize\gph\tilde{\Phi}_{\epsilon}}\left(x,y\right)\subset\left(\left\{ \mathbf{0}\right\} \times N_{X}\left(y\right)\right)+\mathbb{R}_{+}\left\{ \left(x-y,y-x\right)\right\} .\label{eq:n-r-L-1-1}\end{equation}

Furthermore, if $X$ is subdifferentially regular at $y$, the above
set inclusion is an equation. By the Mordukhovich criterion \cite[Theorem 9.40]{RW98},
$\tilde{\Phi}_{\epsilon}$ has the Aubin Property at $\left(x,y\right)$
if and only if the graphical modulus $\lip\tilde{\Phi}_{\epsilon}\left(x\mid y\right)$
is finite. It can be calculated by appealing to the formulas for the
coderivative $D^{*}$ \cite[Definition 8.33]{RW98} and outer norm
$\left|\cdot\right|^{+}$ \cite[Section 9D]{RW98} below.  \begin{eqnarray}
\lip\tilde{\Phi}_{\epsilon}\left(x\mid y\right) & = & \left|D^{*}\tilde{\Phi}_{\epsilon}\left(x\mid y\right)\right|^{+}\mbox{ (by [}\ref{the:RW98}\mbox{, Theorem 9.40])}\nonumber \\
 & = & \sup_{w\in\mathbb{B}}\sup_{z\in D^{*}\tilde{\Phi}_{\epsilon}\left(w\right)}\left\Vert z\right\Vert \mbox{ (by [}\ref{the:RW98}\mbox{, Section 9D])}\nonumber \\
 & = & \sup\left\{ \left\Vert z\right\Vert \mid\left(w,z\right)\in\gph D^{*}\tilde{\Phi}_{\epsilon},\left\Vert w\right\Vert \leq1\right\} \nonumber \\
 & = & \sup\left\{ \left\Vert z\right\Vert \mid\left(-z,w\right)\in N_{\scriptsize\gph\tilde{\Phi}_{\epsilon}}\left(x,y\right),\left\Vert w\right\Vert \leq1\right\} \nonumber \\
 &  & \mbox{ (by [}\ref{the:RW98}\mbox{, Definition 8.33])}\nonumber \\
 & \leq & \sup\{\left\Vert z\right\Vert \mid\left(-z,w\right)\in\left(\left\{ \mathbf{0}\right\} \times N_{X}\left(y\right)\right)\label{eq:n-r-L-1-2}\\
 &  & \quad\quad+\mathbb{R}_{+}\left\{ \left(x-y,y-x\right)\right\} ,\left\Vert w\right\Vert \leq1.\nonumber \end{eqnarray}
We can assume that $z=y-x$ with a rescaling, and $w=y-x+v$ for some
$v\in N_{X}\left(y\right)$. Since $\left(\left\{ \mathbf{0}\right\} \times N_{X}\left(y\right)\right)+\mathbb{R}_{+}\left\{ \left(x-y,y-x\right)\right\} $
is positively homogeneous set, we could find the supremum of $\frac{\left\Vert z\right\Vert }{\left\Vert w\right\Vert }$
in the same set and the formula reduces to \begin{eqnarray}
\lip\tilde{\Phi}_{\epsilon}\left(x\mid y\right) & \leq & \sup_{v\in N_{X}\left(y\right)}\frac{\left\Vert y-x\right\Vert }{\left\Vert y-x+v\right\Vert }\nonumber \\
 & = & \sup_{v\in N_{X}\left(y\right)}\frac{\left\Vert x-y\right\Vert }{\left\Vert \left(x-y\right)-v\right\Vert }\nonumber \\
 & = & \frac{\left\Vert x-y\right\Vert }{d\left(x-y,N_{X}\left(y\right)\right)}.\label{eq:n-r-1-3}\end{eqnarray}

For a fixed $x\neq y$, say $\bar{x}$, we have $1/\lip\tilde{\Phi}_{\epsilon}\left(\bar{x}\mid y\right)\geq\frac{d\left(\bar{x}-y,N_{X}\left(y\right)\right)}{\left\Vert \bar{x}-y\right\Vert }$.
First, we prove that for any open set $W$ about $\bar{x}$, we have
\begin{equation}
\inf_{{y\in W\cap X\atop y\neq\bar{x}}}\frac{d\left(\bar{x}-y,N_{X}\left(y\right)\right)}{\left\Vert \bar{x}-y\right\Vert }=\inf_{{y\in W\cap X\atop y\neq\bar{x}}}\frac{d\left(\bar{x}-y,\hat{N}_{X}\left(y\right)\right)}{\left\Vert \bar{x}-y\right\Vert }.\label{eq:two-infs}\end{equation}
It is clear that {}``$\leq$'' holds because $\hat{N}_{X}\left(y\right)\subset N_{X}\left(y\right)$,
so we proceed to prove the other inequality. Consider $d\left(\bar{x}-y,N_{X}\left(y\right)\right)$.
Let $v\in P_{N_{X}\left(y\right)}\left(\bar{x}-y\right)$, the projection
of $\left(\bar{x}-y\right)$ onto $N_{X}\left(y\right)$. Then $v\in N_{X}\left(y\right)$,
and so there exists $y_{i}\rightarrow y$, with $y_{i}\in W\cap X$,
and $v_{i}\rightarrow v$ such that $v_{i}\in\hat{N}_{X}\left(y_{i}\right)$.
So \begin{eqnarray*}
d\left(\bar{x}-y,N_{X}\left(y\right)\right) & = & d\left(\bar{x}-y,\mathbb{R}_{+}\left(v\right)\right)\\
 & = & \lim_{i\rightarrow\infty}d\left(\bar{x}-y,\mathbb{R}_{+}\left(v_{i}\right)\right)\\
 & = & \lim_{i\rightarrow\infty}d\left(\bar{x}-y_{i},\mathbb{R}_{+}\left(v_{i}\right)\right)\\
 & \geq & \limsup_{i\rightarrow\infty}d\left(\bar{x}-y_{i},\hat{N}_{X}\left(y_{i}\right)\right)\\
\Rightarrow\frac{d\left(\bar{x}-y,N_{X}\left(y\right)\right)}{\left\Vert \bar{x}-y\right\Vert } & \geq & \limsup_{i\rightarrow\infty}\frac{d\left(\bar{x}-y_{i},\hat{N}_{X}\left(y_{i}\right)\right)}{\left\Vert \bar{x}-y_{i}\right\Vert }.\end{eqnarray*}
Thus equation \ref{eq:two-infs} holds. Therefore \[
\liminf_{y\rightarrow\bar{x}}\frac{d\left(\bar{x}-y,\hat{N}_{X}\left(y\right)\right)}{\left\Vert \bar{x}-y\right\Vert }\geq1\mbox{ implies }\limsup_{y\rightarrow\bar{x}}\lip\tilde{\Phi}_{\left\Vert \bar{x}-y\right\Vert }\left(\bar{x}\mid y\right)\leq1,\]
so we may now consider only regular normal cones.

By Lemma \ref{lem:simple-12.22}, we deduce the following: \[
d\left(\bar{x}-y,\hat{N}_{X}\left(y\right)\right)^{2}+d\left(\bar{x}-y,\hat{N}_{X}\left(y\right)^{*}\right)^{2}=\left\Vert \bar{x}-y\right\Vert ^{2}\mbox{ for }y\in X.\]
Since $T_{X}\left(y\right)^{*}=\hat{N}_{X}\left(y\right)$ always
\cite[Theorem 6.28(a)]{RW98}, we apply Lemma \ref{lem:simple-12.22}
and get \[
d\left(\bar{x}-y,\hat{N}_{X}\left(y\right)\right)^{2}+d\left(\bar{x}-y,T_{X}\left(y\right)^{**}\right)^{2}=\left\Vert \bar{x}-y\right\Vert ^{2}\mbox{ for }y\in X.\]
As $T_{X}\left(y\right)\subset T_{X}\left(y\right)^{**}$ \cite[Corollary 6.21]{RW98},
this implies that\begin{equation}
d\left(\bar{x}-y,\hat{N}_{X}\left(y\right)\right)^{2}+d\left(\bar{x}-y,T_{X}\left(y\right)\right)^{2}\geq\left\Vert \bar{x}-y\right\Vert ^{2}\mbox{ for }y\in X.\label{eq:inequality-3}\end{equation}
Note that if $X$ is nearly radial at $\bar{x}$, then $\frac{1}{\left\Vert \bar{x}-y\right\Vert }d\left(\bar{x}-y,T_{X}\left(y\right)\right)\rightarrow0$
as $\epsilon=\left\Vert \bar{x}-y\right\Vert \downarrow0$, $y\in X$.
This means that \[
1/\lip\tilde{\Phi}_{\left\Vert \bar{x}-y\right\Vert }\left(\bar{x}\mid y\right)\geq\frac{1}{\left\Vert \bar{x}-y\right\Vert }d\left(\bar{x}-y,\hat{N}_{X}\left(y\right)\right)\rightarrow1,\]
so \[
\limsup_{y\xrightarrow[X]{}\bar{x},y\neq\bar{x}}\lip\tilde{\Phi}_{\|\bar{x}-y\|}\left(\bar{x}\mid y\right)\leq1,\]
where $y\xrightarrow[X]{}\bar{x}$ means $y\in X$ and $y\rightarrow\bar{x}$.

Recall that $\tilde{\Phi}_{\epsilon}$ has closed graph, and hence
it is outer semicontinuous \cite[Theorem 5.7(a)]{RW98}. It is also
locally bounded, so \[
\lipi\tilde{\Phi}_{\epsilon}\left(\bar{x}\right)=\max_{y\in S_{\epsilon}\left(\bar{x}\right)}\lip\tilde{\Phi}_{\epsilon}\left(\bar{x}\mid y\right)\]
by \cite[Theorem 1.42]{Mor06}. This gives us $\limsup_{\epsilon\rightarrow0}\lipi\tilde{\Phi}_{\epsilon}\left(\bar{x}\right)\leq1$,
or $X$ is $1$-peaceful at $\bar{x}$, as needed.

If we assume that $X$ is regular in a neighborhood of $\bar{x}$,
then Formula \eqref{eq:inequality-3} is an equation. Furthermore,
\eqref{eq:n-r-L-1-1}, \eqref{eq:n-r-L-1-2} and \eqref{eq:n-r-1-3}
are all equations. Thus if $\lim_{\epsilon\rightarrow0}\lipi\tilde{\Phi}_{\epsilon}\left(\bar{x}\right)=1$,
then \[
\frac{1}{\left\Vert \bar{x}-y\right\Vert }d\left(\bar{x}-y,\hat{N}_{X}\left(y\right)\right)=1/\lip\tilde{\Phi}_{\left\Vert \bar{x}-y\right\Vert }\left(\bar{x}\mid y\right)\rightarrow1\mbox{ as }y\xrightarrow[X]{}\bar{x},\, y\neq\bar{x}.\]
and we have $\frac{1}{\left\Vert \bar{x}-y\right\Vert }d\left(\bar{x}-y,T_{X}\left(y\right)\right)\rightarrow0$
as $y\xrightarrow[X]{}\bar{x}$ and $y\neq\bar{x}$, which means that
$X$ is nearly radial at $\bar{x}$.
\end{proof}
Finally, $1-$peaceful sets are interesting in robust regularization
for another reason. The Lipschitz modulus of the robust regularization
over $1$-peaceful sets have Lipschitz modulus bounded above by that
of the original function, as the following result shows. 

\begin{prop}
\label{pro:Lipschitz_upper_bound}If $X$ is $1$-peaceful and $F:X\rightarrow\mathbb{R}^{n}$
is locally Lipschitz at $\bar{x}$, then \[
\limsup_{\epsilon\rightarrow0}\lipi F_{\epsilon}\left(\bar{x}\right)\leq\lip F\left(\bar{x}\right).\]

\end{prop}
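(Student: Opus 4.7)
The natural strategy is to factor $F_{\epsilon}=F\circ\Phi_{\epsilon}$, multiply the Lipschitz estimates for the two factors, and then pass to the limit $\epsilon\downarrow 0$.

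Fix arbitrary constants $\kappa_{F}>\lip F(\bar{x})$ and $\kappa_{1}>1$. Since $F$ is locally Lipschitz at $\bar{x}$, Definition~\ref{def:calm-and-lip}(b) furnishes a neighborhood $U$ of $\bar{x}$ with
\[
|F(u)-F(u')|\leq\kappa_{F}|u-u'|\mbox{ for all }u,u'\in U\cap X.
\]
Since $X$ is $1$-peaceful at $\bar{x}$, for every sufficiently small $\epsilon>0$ there is a neighborhood $V_{\epsilon}$ of $\bar{x}$ on which
\[
\Phi_{\epsilon}(x)\subset\Phi_{\epsilon}(x')+\kappa_{1}|x-x'|\mathbb{B}\mbox{ for all }x,x'\in V_{\epsilon}\cap X.
\]

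Next I would choose $\epsilon$ small enough that $\mathbb{B}_{2\epsilon}(\bar{x})\subset U$, and then shrink $V_{\epsilon}$ so that every $\tilde{x}\in\Phi_{\epsilon}(x)$ and every point within $\kappa_{1}|x-x'|$ of $\tilde{x}$ lies in $U$ whenever $x,x'\in V_{\epsilon}\cap X$. Given any $y\in F_{\epsilon}(x)$, write $y=F(\tilde{x})$ with $\tilde{x}\in\Phi_{\epsilon}(x)$, use $1$-peacefulness to pick $\hat{x}\in\Phi_{\epsilon}(x')$ with $|\hat{x}-\tilde{x}|\leq\kappa_{1}|x-x'|$, and then apply the Lipschitz bound on $F$ to obtain $|F(\tilde{x})-F(\hat{x})|\leq\kappa_{F}\kappa_{1}|x-x'|$. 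Since $F(\hat{x})\in F_{\epsilon}(x')$, this gives $F_{\epsilon}(x)\subset F_{\epsilon}(x')+\kappa_{F}\kappa_{1}|x-x'|\mathbb{B}$; the reverse inclusion follows by symmetry, so $\mathbf{d}(F_{\epsilon}(x),F_{\epsilon}(x'))\leq\kappa_{F}\kappa_{1}|x-x'|$ and therefore $\lip F_{\epsilon}(\bar{x})\leq\kappa_{F}\kappa_{1}$ for all small $\epsilon>0$.

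Taking $\limsup$ as $\epsilon\downarrow 0$ and then letting $\kappa_{F}\searrow\lip F(\bar{x})$ and $\kappa_{1}\searrow 1$ yields the claimed inequality. The sole technical wrinkle is ensuring that both auxiliary points $\tilde{x}$ and $\hat{x}$ lie inside the Lipschitz neighborhood $U$ of $F$, since each sits within roughly $\epsilon+|x-\bar{x}|+\kappa_{1}|x-x'|$ of $\bar{x}$; this forces a coordinated shrinking of $\epsilon$ and of the neighborhood of $\bar{x}$, but is otherwise routine bookkeeping and presents no genuine obstacle.
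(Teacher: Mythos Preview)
Your argument is correct and follows the same strategy as the paper: factor $F_{\epsilon}=F\circ\Phi_{\epsilon}$ and bound $\lip F_{\epsilon}(\bar{x})$ by the product of the Lipschitz moduli of the two factors, then let $\epsilon\downarrow 0$. The only difference is cosmetic: the paper invokes the set-valued chain rule \cite[Exercise~10.39]{RW98} to obtain $\lip F_{\epsilon}(\bar{x})\leq\lip\tilde{\Phi}_{\epsilon}(\bar{x})\cdot\max_{x\in\tilde{\Phi}_{\epsilon}(\bar{x})}\lip F(x)$ in one line, whereas you unroll that chain rule by hand (pick $\tilde{x}$, then $\hat{x}$, then apply the Lipschitz bound on $F$); your version is more self-contained, the paper's is shorter.
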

\begin{proof}
We use a set-valued chain rule \cite[Exercise 10.39]{RW98}. Recall
the formula $F_{\epsilon}=\left(F\circ\tilde{\Phi}_{\epsilon}\right)\mid_{X}$.
The mapping $\left(x,u\right)\mapsto\tilde{\Phi}_{\epsilon}\left(x\right)\cap F^{-1}\left(u\right)$
is locally bounded because the map $x\mapsto\tilde{\Phi}_{\epsilon}\left(x\right)$
is locally bounded. Thus\[
\lipi F_{\epsilon}\left(\bar{x}\right)\leq\lipi\tilde{\Phi}_{\epsilon}\left(\bar{x}\right)\cdot\max_{x\in\tilde{\Phi}_{\epsilon}\left(\bar{x}\right)}\lip F\left(x\right).\]
By Theorem \ref{pro:nearly-radial-Lipschitz-1}, $\lim_{\epsilon\rightarrow0}\lipi\tilde{\Phi}_{\epsilon}\left(\bar{x}\right)\leq1$.
Also, since $\lip F:\mathbb{\mathbb{R}}^{n}\rightarrow\mathbb{R}_{+}$
is upper semicontinuous, $\limsup_{\epsilon\rightarrow0}\max_{x\in\tilde{\Phi}_{\epsilon}\left(\bar{x}\right)}\lip F\left(x\right)\leq\lip F\left(\bar{x}\right)$.
Taking limits to both sides gives us what we need.
\end{proof}

\section{Nearly radial sets}

As highlighted in Section \ref{sec:The-condition}, nearly radial
sets are $1$-peaceful. In this section, we study the properties
of nearly radial sets and give examples of nearly radial sets to illustrate
their abundance in analysis. 

We contrast the definition of nearly radial sets given before Proposition
\ref{pro:inf-lip-regularization} with a stronger property introduced
by \cite{Sha94}, which is the uniform version of the same idea. This
idea was called \emph{$o(1)$-convexity} in \cite{Sha94}. 

\begin{defn}
(nearly convex sets) A set $\A\subset\mathbb{R}^{n}$ is \emph{nearly
convex} at a point $\bar{x}\in\A$ if \[
\mbox{dist}\left(y,x+T_{\A}\left(x\right)\right)=o\left(\| x-y\|\right)\mbox{ as }x,y\rightarrow\bar{x}\mbox{ in }\A\]
The set $\A$ is \emph{nearly convex} if it is nearly convex at every
point $\A$. $\diamond$
\end{defn}
Clearly if a set is nearly convex at a point, then it is nearly radial
there, but the class of nearly radial sets is considerably broader.
For example, the set \[
\A=\{ x\in\mathbb{R}^{2}:x_{1}x_{2}=0\}\]
 is nearly radial at the origin but not nearly convex there, since
as $n\rightarrow\infty$ the points $x_{n}=(n^{-1},0)$ and $y_{n}=(0,n^{-1})$
approach the origin in $\A$ and yet \[
\mbox{dist}(y_{n},x_{n}+T_{\A}(x_{n}))=n^{-1}\ne o(\Vert x_{n}-y_{n}\Vert).\]

It is immediate that convex sets are nearly convex, and hence nearly
radial. A straightforward exercise shows that smooth manifolds are
also nearly convex, and hence again nearly radial. These observations
are both special cases of the following result, rather analogous to
\cite[Theorem 2.2]{Sha94}. A set $\A\subset\mathbb{R}^{n}$ is \emph{amenable}
\cite[Section 10F]{RW98} at a point $\bar{x}\in\A$ if there is an
open neighborhood $V$ of $\bar{x}$, a ${\mathcal{C}}^{1}$ mapping
$F:V\rightarrow\mathbb{R}^{m}$, and a closed convex set $D\subset\mathbb{R}^{m}$,
such that \begin{eqnarray}
 &  & \A\cap V=\left\{ x\in V:F\left(x\right)\in D\right\} \nonumber \\
 & \mbox{ and } & N_{D}\left(F\left(\bar{x}\right)\right)\cap N\left(\nabla F\left(\bar{x}\right)^{*}\right)=\left\{ \mathbf{0}\right\} ,\label{eq:6.2}\end{eqnarray}
where $N_{D}(\cdot)$ denotes the normal cone to $D$, and $N(\cdot)$
denotes null space. If in fact $F$ is ${\mathcal{C}}^{2}$ then we
call $\A$ \emph{strongly amenable} \cite[Definition 10.23]{RW98}
at $\bar{x}$.

\begin{thm}
(amenable implies nearly radial)\label{thm:amenable=3D>nearly-radial}
Suppose the set $\A\subset\mathbb{R}^{n}$ is amenable at the point
$\bar{x}\in\A$.  Then $\A$ is nearly convex (and hence nearly radial)
at $\bar{x}$. 
\end{thm}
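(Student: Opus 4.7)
The plan is to exploit the amenability hypothesis to obtain both a usable description of the tangent cone $T_{\A}(x)$ for $x \in \A$ near $\bar{x}$ and a uniform metric-regularity estimate that converts approximate first-order feasibility of a nearby point of $\A$ into an actual tangent vector.

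First I would observe that, by upper semicontinuity of the normal-cone mapping together with continuity of $F$ and $\nabla F$, the constraint qualification in \eqref{eq:6.2} propagates from $\bar{x}$ to every point $x \in \A$ in some open neighborhood of $\bar{x}$. The standard tangent-cone calculus for amenable sets (see for instance \cite[Example 6.7 and Theorem 6.14]{RW98}) then yields
\[
T_{\A}(x) \;=\; \{\, v \in \mathbb{R}^n : \nabla F(x)\,v \in T_D(F(x)) \,\}
\]
for all such $x$. Next I would establish a uniform metric-regularity estimate: there exist $\kappa > 0$ and a neighborhood $U$ of $\bar{x}$ such that, for every $x \in \A \cap U$ and every $w \in \mathbb{R}^n$,
\[
\mbox{dist}\bigl(w,\, T_{\A}(x)\bigr) \;\leq\; \kappa\cdot\mbox{dist}\bigl(\nabla F(x)\,w,\, T_D(F(x))\bigr).
\]
This follows from Robinson's theorem applied to the linearized inclusion $\nabla F(x)\,v \in T_D(F(x))$ at each $x$; the constant $\kappa$ can be chosen uniform in $x$ because the constraint qualification holds uniformly on $\A \cap U$ and the associated regularity modulus varies upper semicontinuously.

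Given these two ingredients, the near convexity estimate is quick. For $x, y \in \A \cap U$, convexity of $D$ gives $F(y) - F(x) \in T_D(F(x))$, while the $\mathcal{C}^1$ regularity of $F$ supplies a first-order expansion
\[
F(y) - F(x) \;=\; \nabla F(x)(y - x) + r(x,y),\qquad \|r(x,y)\| = o(\|y-x\|),
\]
uniformly in $x,y$ near $\bar{x}$ (by uniform continuity of $\nabla F$ on a compact neighborhood of $\bar{x}$). Hence $\mbox{dist}(\nabla F(x)(y-x),\, T_D(F(x))) \leq \|r(x,y)\| = o(\|y-x\|)$, and applying the metric-regularity estimate with $w = y-x$ gives
\[
\mbox{dist}\bigl(y,\, x + T_{\A}(x)\bigr) \;=\; \mbox{dist}\bigl(y-x,\, T_{\A}(x)\bigr) \;\leq\; \kappa\cdot o(\|y-x\|) \;=\; o(\|y-x\|)
\]
as $x,y \to \bar{x}$ in $\A$, which is exactly near convexity.

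The main obstacle is the second step: showing that the constant in Robinson's theorem can be chosen independent of $x$ throughout a neighborhood of $\bar{x}$. One natural route is to appeal to the stability of the Aubin property (equivalently, of metric regularity) under small $\mathcal{C}^1$ perturbation of $F$, using the case $x = \bar{x}$ as a base. An alternative is a direct argument based on lower semicontinuity of the surjectivity modulus of $\nabla F(x)$ restricted to $N_D(F(x))$, which remains bounded below on a neighborhood thanks to the constraint qualification. Either approach is by now routine in variational analysis, but the uniformity has to be spelled out, since without it the error $\kappa\cdot\|r(x,y)\|$ could fail to be $o(\|y-x\|)$ as $x,y \to \bar{x}$ simultaneously.
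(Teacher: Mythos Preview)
Your argument is correct, and the uniformity of $\kappa$ that you flag as the main obstacle can indeed be obtained: a short compactness argument shows that $|\nabla F(x)^{*}u|\ge c|u|$ for all $u\in N_{D}(F(x))$ and all $x\in\A$ near $\bar x$, with $c>0$ independent of $x$; dualizing this gives your error bound with $\kappa=1/c$. However, your route differs from the paper's. The paper never invokes Robinson's theorem or the tangent-cone formula $T_{\A}(x)=\nabla F(x)^{-1}T_{D}(F(x))$; instead it works on the dual side, projecting the normalized direction $z_{r}=\|x_{r}-y_{r}\|^{-1}(x_{r}-y_{r})$ onto $T_{\A}(x_{r})$, writing the residual as $\nabla F(x_{r})^{*}u_{r}$ with $u_{r}\in -N_{D}(F(x_{r}))$ via the normal-cone formula, and then showing directly that $\{u_{r}\}$ stays bounded by a contradiction against the constraint qualification. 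The two key ingredients you use---$F(y)-F(x)\in T_{D}(F(x))$ from convexity of $D$, and the uniform $o(\|x-y\|)$ Taylor remainder---appear in both proofs, and the paper's bounded-multiplier step is exactly the dual of your uniform-$\kappa$ step. What each approach buys: the paper's argument is entirely self-contained and avoids any appeal to metric-regularity theory, while yours makes the structure (linearize, measure residual, apply an error bound) more transparent at the cost of importing Robinson-type machinery and having to justify the uniformity separately.
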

\begin{proof}
Since $\A$ is amenable at $\bar{x}$, we can suppose property (\ref{eq:6.2})
holds. Suppose without loss of generality $\bar{x}=\mathbf{0}$, and
consider a sequences of points $x_{r},y_{r}\rightarrow\mathbf{0}$
in the set $\A\cap V$. We want to show \[
\mbox{dist}(y_{r},x_{r}+T_{\A}(x_{r}))=o(\Vert x_{r}-y_{r}\Vert).\]
 Without loss of generality we can suppose $x_{r}\ne y_{r}$ for all
$r$, and denote the unit vectors $\Vert x_{r}-y_{r}\Vert^{-1}(x_{r}-y_{r})$
by $z_{r}$. We want to prove \[
d_{r}=\min\{\Vert w+z_{r}\Vert:w\in T_{\A}(x_{r})\}\rightarrow0.\]
 The unique minimizer $w_{r}\in T_{\A}(x_{r})$ in the above projection
problem satisfies\begin{eqnarray*}
d_{r} & = & \Vert w_{r}+z_{r}\Vert\\
w_{r}+z_{r} & \in & -N_{\A}(x_{r})=-\nabla F(x_{r})^{*}N_{D}(F(x_{r}))\\
\left\langle w_{r},w_{r}+z_{r}\right\rangle  & = & 0,\end{eqnarray*}
 by \cite[Exercise 10.26(d)]{RW98}. Choose vectors $u_{r}\in-N_{D}(F(x_{r}))$
such that \[
w_{r}+z_{r}=\nabla F(x_{r})^{*}u_{r}.\]

We next observe that the sequence of vectors $\{ u_{r}\}$ is bounded.
Otherwise, we could choose a subsequence $\{ u_{r'}\}$ satisfying
$\Vert u_{r'}\Vert\rightarrow\infty$, and then any limit point of
the sequence of unit vectors $\{\Vert u_{r'}\Vert^{-1}u_{r'}\}$ must
lie in the set $-N_{D}(F(\mathbf{0}))\cap N(\nabla F(\mathbf{0})^{*})$,
contradicting property (\ref{eq:6.2}).

We now have \begin{eqnarray*}
0 & \leq & d_{r}^{2}\,=\,\left\langle z_{r},\nabla F(x_{r})^{*}u_{r}\right\rangle \,=\,\left\langle \nabla F(x_{r})z_{r},u_{r}\right\rangle \\
 & = & \left\langle \nabla F(x_{r})z_{r}-\Vert x_{r}-y_{r}\Vert^{-1}[F(x_{r})-F(y_{r})],u_{r}\right\rangle \\
 &  & \qquad\qquad+\left\langle \Vert x_{r}-y_{r}\Vert^{-1}[F(x_{r})-F(y_{r})],u_{r}\right\rangle .\end{eqnarray*}
 The first term converges to zero, using the smoothness of the mapping
$F$ and the boundedness of the sequence $\{ u_{r}\}$. On the other
hand, since the set $D$ is convex, we have $F(y_{r})-F(x_{r})\in T_{D}(F(x_{r}))$,
and $u_{r}\in-N_{D}(F(x_{r}))$ by assumption, so the second term
is nonpositive, and the result follows. 
\end{proof}
It is worth comparing these notions to a property that is slightly
stronger still: \emph{prox-regularity} (in the terminology of \cite[Section 13F]{RW98}),
or \emph{$O(2)$-convexity} \cite{Sha94}. 

\begin{defn}
(prox-regular sets) A set $\A\subset\mathbb{R}^{n}$ is \emph{prox-regular}
at a point $\bar{x}\in\A$ if \[
\mbox{dist}\left(y,x+T_{\A}\left(x\right)\right)=O\left(\| x-y\|^{2}\right)\mbox{ as }x,y\rightarrow\bar{x}\mbox{ in }\A.\diamond\]

\end{defn}
Theorem \ref{thm:amenable=3D>nearly-radial} (amenable implies nearly
radial) is analogous to the fact that strong amenability implies prox-regularity
\cite[Proposition 13.32]{RW98} (and also to \cite[Proposition 2.3]{Sha94}).

The class of nearly radial sets is very broad, as the following easy
result (which fails for nearly convex sets) emphasizes.

\begin{prop}
\label{pro:unions}(unions) If the sets $\A_{1},\A_{2},\ldots,\A_{n}$
are each nearly radial at the point $\bar{x}\in\cap_{j}\A_{j}$, then
so is the union $\cup_{j}\A_{j}$. 
\end{prop}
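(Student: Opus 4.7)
The plan is to reduce the statement to a pointwise tangent-cone inclusion and then exploit finiteness of the collection to promote local estimates on each piece to a local estimate on the union.

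First, I would verify the inclusion $T_{\A_{j}}(x) \subset T_{\cup_{i}\A_{i}}(x)$ whenever $x\in \A_{j}$. This is immediate from the Bouligand definition recalled in Section~\ref{sec:general-r-r}: a vector $v\in T_{\A_{j}}(x)$ arises as a limit $\lim t_{r}^{-1}(x_{r}-x)$ for some $t_{r}\downarrow 0$ and $x_{r}\to x$ with $x_{r}\in \A_{j}$, and since $\A_{j}\subset \cup_{i}\A_{i}$ the very same sequence certifies $v\in T_{\cup_{i}\A_{i}}(x)$. Consequently
\[
\mbox{dist}\bigl(\bar{x},\, x + T_{\cup_{i}\A_{i}}(x)\bigr) \;\le\; \mbox{dist}\bigl(\bar{x},\, x + T_{\A_{j}}(x)\bigr) \qquad \text{whenever } x\in \A_{j}.
\]

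Next, I would run a direct $\epsilon$-$\delta$ argument. Fix $\epsilon>0$. By the near-radiality hypothesis on each $\A_{j}$, there exists $\delta_{j}>0$ so that $\mbox{dist}(\bar{x},x+T_{\A_{j}}(x))<\epsilon$ for every $x\in \A_{j}$ with $|x-\bar{x}|<\delta_{j}$. Set $\delta:=\min_{1\le j\le n}\delta_{j}$; this is strictly positive precisely \emph{because the index set is finite}. Any $x\in \cup_{i}\A_{i}$ with $|x-\bar{x}|<\delta$ belongs to at least one $\A_{j}$, and combining the displayed inequality with the local bound for that $j$ gives $\mbox{dist}(\bar{x},x+T_{\cup_{i}\A_{i}}(x))<\epsilon$. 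Since $\epsilon$ was arbitrary, this is exactly the definition of near-radiality of $\cup_{i}\A_{i}$ at $\bar{x}$.

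The argument has no serious obstacle: the tangent-cone inclusion is a tautology from the definition, and the remainder is a routine $\epsilon$-$\delta$ exercise whose only substantive ingredient is the finiteness of the collection, which prevents $\min_{j}\delta_{j}$ from collapsing to zero. It is worth noting that this finiteness is essential, and that the corresponding statement fails for the stronger notion of nearly convex sets, as the example $\{x\in\mathbb{R}^{2}: x_{1}x_{2}=0\}$ discussed just before Theorem~\ref{thm:amenable=3D>nearly-radial} already demonstrates.
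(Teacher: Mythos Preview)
Your proof is correct and uses essentially the same ingredients as the paper's: the tangent-cone inclusion $T_{\A_{j}}(x)\subset T_{\cup_{i}\A_{i}}(x)$ and the finiteness of the family. The only cosmetic difference is packaging---the paper argues by contradiction, extracting a subsequence that lies in a single $\A_{i}$ (pigeonhole), whereas you give the direct $\epsilon$-$\delta$ version by taking $\delta=\min_{j}\delta_{j}$; these are two sides of the same coin.
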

\begin{proof}
If the result fails, there is a sequence of points $x_{r}\rightarrow\bar{x}$
in $\cup_{j}\A_{j}$ and real $\epsilon>0$ such that 

\begin{equation}
\mbox{dist}\left(\frac{\bar{x}-x_{r}}{\Vert\bar{x}-x_{r}\Vert},T_{\cup_{j}\A_{j}}(x_{r})\right)\geq\epsilon\mbox{ for all }r.\label{eq:6.5}\end{equation}
By taking a subsequence, we can suppose that there is an index $i$
such that $x_{r}\in\A_{i}$ for all $r$. But then we know \[
\mbox{dist}\left(\frac{\bar{x}-x_{r}}{\Vert\bar{x}-x_{r}\Vert},T_{\A_{i}}(x_{r})\right)\rightarrow0,\]
 which contradicts inequality (\ref{eq:6.5}), since $T_{\A_{i}}(x_{r})\subset T_{\cup_{j}\A_{j}}(x_{r})$. 
\end{proof}
A key concept in variational analysis is the idea of Clarke regularity
(see for example \cite{Cla83,Cla98,RW98}). We make no essential use
of this concept in our development, but it is worth remarking on the
relationship (or lack of it) between the nearly radial property and
Clarke regularity. Note first that nearly radial sets need not be
Clarke regular: the union of the two coordinate axes in $\mathbb{R}^{2}$
is nearly radial at the origin, for example, but it is not Clarke
regular there.

On the other hand, Clarke regular sets need not be nearly radial. 

\begin{example}
\label{exa:not-nearly-radial}Consider the function $f:\mathbb{R}\rightarrow\mathbb{R}$
defined by {\small \[
f(x)=\left\{ \begin{array}{ll}
2^{-n}-2^{-n-1}(2-2^{n+1}|x|)^{1+2^{-n}} & \mbox{if}~2^{-n-1}\leq|x|\leq2^{-n}~(n\in\mathbb{N})\\
0 & \mbox{if}~x=0.\end{array}\right.\]
}The function $f$ is even, and its graph consists of concave segments
on each interval $x\in[2^{-n-1},2^{-n}]$, passing through the point
$2^{-n}(1,1)$ with left derivative zero, and through the point $2^{-n-1}(1,1)$
with right derivative $1+2^{-n}$. A routine calculation now shows
that this function is everywhere regular, and hence its epigraph $\mbox{\textrm{epi}}\, f$
is everywhere Clarke regular. However, $\mbox{\textrm{epi}}\, f$
is not nearly radial at the origin. To see this, observe that for
each $n\in\mathbb{N}$, if we consider the sequence $x_{n}=2^{-n}(1,1)\rightarrow(0,0)$,
then we have \[
T_{\mbox{\scriptsize epi}\, f}(x_{n})=\left\{ (x,y):y\geq(1+2^{1-n})\max\{ x,0\}\right\} ,\]
 so \[
\mbox{dist}(0,x_{n}+T_{\mbox{\scriptsize epi}\, f}(x_{n}))=\frac{\Vert x_{n}\Vert}{\sqrt{2}},\]
 contradicting the definition of a nearly radial set. $\diamond$
\end{example}
This is yet another attractive property for semi-algebraic sets.

\begin{thm}
(semi-algebraic sets)\label{thm:semi-algebraic=3D>nearly-radial}
Semi-algebraic sets are nearly radial.
\end{thm}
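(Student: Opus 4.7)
The plan is to argue by contradiction, combining the Curve Selection Lemma, cell decomposition of semi-algebraic sets into smooth strata, and Puiseux expansion of semi-algebraic arcs. Suppose $X$ fails to be nearly radial at some $\bar{x}\in X$. I would first consider the function
\[
\phi(x) := \frac{\mbox{dist}\bigl(\bar{x}-x,\,T_X(x)\bigr)}{\|\bar{x}-x\|},\qquad x\in X\setminus\{\bar{x}\},
\]
which is well defined and semi-algebraic because the set $\{(x,v):v\in T_X(x)\}$ is semi-algebraic by Tarski-Seidenberg. The negation of the nearly radial property supplies some $\epsilon>0$ and a sequence $x_r\to\bar{x}$ in $X$ with $\phi(x_r)\ge\epsilon$, so $\bar{x}$ lies in the closure of the semi-algebraic set $A:=\{x\in X\setminus\{\bar{x}\}:\phi(x)\ge\epsilon\}$. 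The Curve Selection Lemma then yields a continuous semi-algebraic arc $\gamma:[0,1]\to\mathbb{R}^n$ with $\gamma(0)=\bar{x}$ and $\gamma((0,1])\subset A$.

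Next I would apply the cell decomposition theorem \cite[Theorem 6.7]{C99} to stratify $X$ into finitely many definable $\mathcal{C}^2$ manifolds, refined to be compatible with $\gamma((0,1])$. After shrinking the interval of parametrization, $\gamma((0,\delta])$ lies in a single $\mathcal{C}^2$ stratum $M\subset X$, and $\gamma$ is $\mathcal{C}^1$ there. Since $M$ is a smooth manifold, $T_M(\gamma(t))$ is a \emph{linear subspace} of $T_X(\gamma(t))$ containing $\gamma'(t)$, and hence also $-\gamma'(t)$. This is the key use of smoothness: for a generic point of a semi-algebraic set the tangent cone is only a closed one-sided cone, so $-\gamma'(t)$ need not be available; the stratification step is precisely what repairs this.

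Finally, Puiseux expansion of each coordinate of $\gamma$ near $t=0$ lets me write
\[
\gamma(t)=\bar{x}+t^{\alpha}v+O(t^{\beta}),\qquad \gamma'(t)=\alpha t^{\alpha-1}v+O(t^{\beta-1}),
\]
for some rationals $0<\alpha<\beta$ and nonzero $v\in\mathbb{R}^n$. A direct term-by-term computation gives $(\bar{x}-\gamma(t))+(t/\alpha)\gamma'(t)=O(t^{\beta})$, while $\|\bar{x}-\gamma(t)\|=\Theta(t^{\alpha})$, so
\[
\mbox{dist}\bigl(\bar{x}-\gamma(t),\,T_X(\gamma(t))\bigr)\le\Bigl\|(\bar{x}-\gamma(t))+\tfrac{t}{\alpha}\gamma'(t)\Bigr\|=o(\|\bar{x}-\gamma(t)\|),
\]
giving $\phi(\gamma(t))\to 0$ and contradicting $\gamma((0,1])\subset A$. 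The main technical obstacle is the stratification step just described, since without access to a linear tangent subspace the antipodal vector $-\gamma'(t)$ is unavailable and no leading-order cancellation is possible; everything else is a routine Puiseux calculation.
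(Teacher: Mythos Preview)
Your proposal is correct and follows essentially the same route as the paper: define a semi-algebraic ``bad set'' where the nearly-radial condition fails by a fixed margin, apply the Curve Selection Lemma to get an arc $\gamma$ through $\bar{x}$, and derive a contradiction from the leading-order (Puiseux/analytic) expansion of $\gamma$ near $t=0$. The one noteworthy difference is that your stratification step is unnecessary: since $\gamma((0,\delta])\subset X$ and $\gamma$ is $C^1$ on that interval, approaching $\gamma(t)$ along the arc from parameter values $s<t$ already shows $-\gamma'(t)\in T_X(\gamma(t))$ directly, without invoking any smooth stratum $M$. The paper exploits exactly this observation (packaged slightly differently, via a neighborhood condition in the definition of its bad set $A_0$ and then substituting $z=p(s)$), so what you call ``the main technical obstacle'' in fact dissolves once you notice that the selected curve itself supplies tangent vectors in both directions.
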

\begin{proof}
Suppose the origin lies in a semi-algebraic set $\A\subset\mathbb{R}^{n}$.
We will show that $\A$ is nearly radial at the origin.

If the result fails, then there is a real $\delta>0$ and a sequence
of points $y_{r}\rightarrow\mathbf{0}$ in $\A$ such that \[
\left\Vert u+\frac{y_{r}}{\Vert y_{r}\Vert}\right\Vert >\delta~~\mbox{for all}~u\in T_{\A}(y_{r}).\]
 Hence for each index $r$ there exists a real $\gamma_{r}>0$ such
that \[
\Big\Vert\frac{z-y_{r}}{\Vert z-y_{r}\Vert}+\frac{y_{r}}{\Vert y_{r}\Vert}\Big\Vert>\delta~~\mbox{for all}~z\in\A~\mbox{such that}~0<\Vert z-y_{r}\Vert<\gamma_{r}.\]
 Consequently, each point $y_{r}$ lies in the set \begin{eqnarray*}
\A_{0} & = & \Big\{ y\in\A\mid\exists\gamma>0~\mbox{so}~\Big\Vert\frac{z-y}{\Vert z-y\Vert}+\frac{y}{\Vert y\Vert}\Big\Vert>\delta\\
 &  & \qquad\qquad\qquad\forall z\in\A\setminus\{ y\}~\mbox{with}~\Vert z-y\Vert<\gamma\Big\},\end{eqnarray*}
 so $\mathbf{0}\in\cl\A_{0}$.

By quantifier elimination (see for example the discussion of the Tarski-Seidenberg
Theorem in \cite[p.~62]{Ben90}), the set $\A_{0}$ is semi-algebraic.
Hence the Curve Selection Lemma (see \cite[p.~98]{Ben90} and \cite{Mil68})
shows that there is a real-analytic path $p:[0,1]\rt\mathbb{R}^{n}$
such that $p(0)=\mathbf{0}$ and $p(t)\in\A_{0}$ for all $t\in(0,1]$.
For some positive integer $k$ and nonzero vector $g\in\mathbb{R}^{n}$
we have, for small $t>0$, \begin{eqnarray*}
p(t) & = & gt^{k}+O(t^{k+1})\\
p'(t) & = & kgt^{k-1}+O(t^{k}),\end{eqnarray*}
 and in particular both $p(t)$ and $p'(t)$ are nonzero. For any
such $t$ we know \[
\left\Vert \frac{z-p(t)}{\Vert z-p(t)\Vert}+\frac{p(t)}{\Vert p(t)\Vert}\right\Vert >\delta\]
 for any point $z\in\A\setminus\{ p(t)\}$ close to $p(t)$. Hence
for any real $s\neq t$ close to $t$ we have \[
\left\Vert \frac{p(s)-p(t)}{\Vert p(s)-p(t)\Vert}+\frac{p(t)}{\Vert p(t)\Vert}\right\Vert >\delta.\]
 Taking the limit as $s\uparrow t$ shows \[
\left\Vert \frac{p(t)}{\Vert p(t)\Vert}-\frac{p'(t)}{\Vert p'(t)\Vert}\right\Vert \geq\delta\]
 for all small $t>0$. But since \[
\lim_{t\downarrow0}\frac{p(t)}{\Vert p(t)\Vert}=\frac{g}{\Vert g\Vert}=\lim_{t\downarrow0}\frac{p'(t)}{\Vert p'(t)\Vert},\]
 this is a contradiction. 
\end{proof}
\noindent By contrast, semi-algebraic sets need not be nearly convex.
For example, the union of the two coordinate axes in $\mathbb{R}^{2}$
is semi-algebraic, but it is not nearly convex at the origin.

\begin{acknowledgement*}
Thanks to Jim Renegar for helpful discussions concerning Theorem \ref{thm:semi-algebraic=3D>nearly-radial}
(semi-algebraic sets), and to Jon Borwein for the statement of the
definition of nearly radial sets before Proposition \ref{pro:inf-lip-regularization}.
The first author's research was supported in part by U.S. NSF grant
DMS-080657.
\end{acknowledgement*}


\begin{thebibliography}{10}
\bibitem{BtN}A. Ben-Tal and A. Nemirovski. Lectures on Modern Convex
Optimization: Analysis, Algorithms, and Engineering Applications.
SIAM, Philadelphia, 2001.

\bibitem{Ben90}R. Benedetti and J.-J. Risler. Real Algebraic and
Semi-algebraic Sets. Hermann, Paris, 1990.

\bibitem{Boy94}S. Boyd, L. El Ghaoui, E. Feron, and V. Balakrishnan.
Linear Matrix Inequalities in System and Control Theory. SIAM, Philadelphia,
1994.

\bibitem{BLO02Pseudo}J.V. Burke, A.S. Lewis, and M.L. Overton. Optimal
stability and eigenvalue multiplicity. Foundations of Computational
Mathematics, 1:205-225, 2001.

\bibitem{BLO2Two}J.V. Burke, A.S. Lewis, and M.L. Overton. Optimization
over pseudospectra. SIAM Journal on Matrix Analysis 25:80-104,2003.

\bibitem{BLO3}J.V. Burke, A.S. Lewis and M.L. Overton, Robust stability
and a criss-cross algorithm for pseudospectra IMA Journal of Numerical
Analysis 23 (2003), pp. 359-375. 

\bibitem{Cla76}F.H. Clarke, A new approach to Lagrange multipliers,
Mathematics of Operations Research, 1, 165-174.

\bibitem{Cla83}F.H. Clarke. Optimization and Nonsmooth Analysis.
Wiley, New York, 1983. Republished as Vol. 5, Classics in Applied
Mathematics, SIAM, 1990. 

\bibitem{Cla98}F.H. Clarke, Yu.S. Ledyaev, R.J. Stern, and P.R. Wolenski.
Nonsmooth Analysis and Control Theory. Springer-Verlag, New York,
1998.

\bibitem{C99}M. Coste, An Introduction to O-minimal geometry,1999.

\bibitem{C02}M. Coste, An Introduction to Semialgebraic Geometry,
2002.

\bibitem{DaG04}A. Daniilidis and P. Georgiev, Approximate Convexity
and Submonotonicity, J. Math. Anal. Appl., 291 (2004), 292-301.

\bibitem{ElG00a}L. El Ghaoui and S.-I. Niculescu. Robust decision
problems in engineering: a linear matrix inequality approach. In L.
El Ghaoui and S.-I. Niculescu, editors, Advances in Linear Matrix
Inequality Methods in Control, pages 3-37. SIAM, Philadelphia, 2000.

\bibitem{Hir94}J.-B. Hiriart-Urruty. The deconvolution operation
in convex analysis: an introduction. Cybernetics Systems Analysis,
30:555-560, 1994.

\bibitem{L02}A. S. Lewis, Robust Regularization, preprint, 2002.

\bibitem{LP07}A. S. Lewis and C. H. J. Pang, Variational analysis
of pseudospectra, 2008, to appear in SIAM Journal on Optimization

\bibitem{Li94}W. Li, Sharp Lipschitz constants for basic optimal
solutions and basic feasible solutions of linear programs, SIAM J.
Control Optim., 32 (1994), pp. 140?153.

\bibitem{MO05}E. Mengi and M.L. Overton. Algorithms for the computation
of the pseudospectral radius and the numerical radius of a matrix,
IMA Journal of Numerical Analysis 25 (2005) pp. 648-669.

\bibitem{Mil68}J.W. Milnor. Singular Points of Complex Hypersurfaces.
Princeton University Press, Princeton, N.J., 1968.

\bibitem{Mor06}B.S. Mordukhovich, Variational Analysis and Generalized
Differentiation I and II. Springer, Berlin, 2006.

\bibitem{Rob07}S.M. Robinson, Solution continuity in monotone affine
variational inequalities, SIAM J. Optim. Volume 18, Issue 3, pp. 1046-1060
(2007) 

\bibitem{Roc82}R.T. Rockafellar. Favorable classes of Lipschitz continuous
functions in subgradient optimization in nondifferentiable optimization
(1982), E. Nurminski (eds.), Pergamon Press, New York.

\bibitem{RW98}\label{the:RW98}R.T. Rockafellar and R.J.-B. Wets.
Variational Analysis. Springer, Berlin, 1998.

\bibitem{RH03}B. Rustem and M. Howe, Algorithms for Worst-Case Design
and Applications to Risk Management, Princeton, NJ, 2003.

\bibitem{Sha94}A. Shapiro. Existence and differentiability of metric
projections in Hilbert space. SIAM Journal on Optimization, 4:130-141,
1994.

\bibitem{ShAk93}A. Shapiro and F. Al-Khayyal. First-order conditions
for isolated locally optimal solutions. Journal of Optimization Theory
and Applications, 77:189-196, 1993.

\bibitem{Tre06}L.N. Trefethen and M. Embree, Spectra and Pseudospectra,
Princeton, 2006.
\end{thebibliography}
\end{document}